\newtheorem{theo}{Theorem}
\newtheorem{cor}{Corollary}
\newtheorem{lem}{Lemma}
\newtheorem{prop}{Proposition}
\theoremstyle{remark}
\newtheorem{dfn}{\bf Definition}
\numberwithin{equation}{section} \numberwithin{theo}{section}
\numberwithin{cor}{section} \numberwithin{lem}{section}
\numberwithin{prop}{section} \numberwithin{claim}{section}
\numberwithin{obs}{section} \numberwithin{dfn}{section}
\newcommand\R{\text{I\!R}}
\newcommand\N{\text{I\!N}}
\newcommand\de{\delta}
\newcommand\be{\beta}
\newcommand\al{\alpha}
\newcommand{\fr}{\partial}
\newcommand{\equ}{\eqref}
\newcommand{\grad}{\nabla}
\newcommand{\ml}{\mathcal}
\newcommand{\DD}{\ml{D}}
\newcommand{\sm}{\setminus}
\newcommand{\la}{\lambda}
\newcommand{\st}{such that }
\newcommand{\dem}{\bf Proof:}
\newcommand\lap{\Delta}
\newcommand\lab{\Delta_g}
\newcommand\ti{\tilde}
\newcommand{\lf}{\left}
\newcommand{\rg}{\right}
\newcommand\ds{\displaystyle}
\newcommand{\bebs}{\begin{equation*}\begin{split}}
\newcommand{\ee}{\end{equation*}}
\newcommand{\esp}{\end{split}}
\DeclareMathAlphabet{\mathpzc}{OT1}{pcz}{m}{it}
\begin{document}
\title{Bubbling solutions for mean field equations with variable intensities on compact Riemann surfaces}
\author{Pablo Figueroa \thanks{Instituto de Ciencias Físicas y Matemáticas, Facultad de  Ciencias, Universidad Austral de Chile, Campus Isla Teja, Valdivia, Chile. E-mail: pablo.figueroa@uach.cl. Author partially supported by grant Fondecyt Regular Nº1201884, Chile. }}
\date{\today}
\maketitle

\begin{abstract} 
\noindent For an asymmetric sinh-Poisson problem arising as a mean field equation of equilibrium turbulence vortices with variable intensities of interest in hydrodynamic turbulence, we address the existence of bubbling solutions on compact Riemann surfaces. By using a Lyapunov-Schmidt reduction, we find sufficient conditions under which there exist bubbling solutions blowing up  at $m$ different points of $S$: positively at $m_1$ points and negatively at $m-m_1$ points with $m\ge 1$ and $m_1\in\{0,1,...,m\}$. 
Several examples in different situations illustrate our results in the sphere $\mathbb S^2$ and flat two-torus $\mathbb T$ including non negative potentials with zero set non empty. \\[0.1cm]
\end{abstract}

\emph{Keywords}: mean field equation, exponential nonlinearity, blow up solutions, Lyapunov-Schmidt reduction 
\\[0.1cm]

\emph{2020 AMS Subject Classification}: 35B44, 35J15, 35J60, 35R01

\section{Introduction}

\noindent Let $(S,g)$ be a compact Riemann surface and consider the problem
\begin{equation}\label{mfewt}
-\lab u=\la_1\lf({V_1(x)e^{u}\over\int_{S}V_1 e^{u}dv_g}-
\frac{1}{|S|}\rg)-\la_2\tau \lf({V_2(x)e^{-\tau u}\over\int_{S}V_2 e^{-\tau u}dv_g}-
\frac{1}{|S|}\rg),
\end{equation}
where $\lambda_1,\la_2\ge0$, $\tau>0$, $V_1$ and $V_2$ are smooth nonnegative potentials in $S$ and $|S|$ is the area of $S$. Here, $\lab$ is the Laplace-Beltrami operator and $dv_g$ is the area element in $(S,g)$. This equation 
have attracted a lot of attention in recent years due to its relevance in the statistical mechanics description of 2D-turbulence, as initiated by Onsager \cite{o}. Precisely, in this context, under a \emph{deterministic} assumption on the distribution of the vortex circulations, Sawada, Suzuki \cite{ss} derive the following equation:
\begin{equation}\label{p12}
\begin{array}{ll}
\ds -\Delta_g u=\lambda \int\limits_{[-1,1]}\alpha \bigg({e^{\alpha u}\over \int_S e^{\alpha u}dv_g} - {1\over |S|}\bigg) d \mathcal P(\alpha)& \hbox{in}\ S
\end{array}
\end{equation}
where $u$ is the stream function of a turbulent Euler flow, $\lambda>0$ is a physical constant related to the inverse temperature and  $\mathcal P$ is a Borel probability measure in $[-1,1]$ describing the point-vortex intensities distribution.

\medskip \noindent Equation \eqref{p12} includes several well-known problems depending on a suitable choice of $\ml P$. For instance, if $\mathcal P=\delta_1$ is concentrated at $1$, then \eqref{p12} is related to the classical mean field equation
\begin{equation}\label{mfe}
-\Delta_g u=\lambda\lf(\frac{Ve^{u}}{\int_S Ve^{u}\,dv_g} -
\frac{1}{|S|}\rg)\quad\text{in}\quad S,
\end{equation}
where $V$ is a smooth nonnegative function on $S$. The latter equation has been studied in several contexts such as conformal
geometry \cite{ChY,ChGY,KW}, statistical mechanics \cite{CLMP1,CLMP2,ChKi,K} and the relativistic Chern-Simons-Higgs model when $S$ is a flat two-torus \cite{NT,T,Tbook}. Notice that solutions of \eqref{mfe} are critical points of the functional
\begin{equation*}	
J_\la(u)={1\over 2}\int_S |\nabla u|^2_g\,dv_g-\la \log\lf(\int_S V e^{u}\, dv_g \rg),\qquad u\in \bar H,
\end{equation*}
where $\bar H=\{u \in H^1(S): \int_S u dv_g=0\}$. Minimizers of $J_\la$ for $\la<8\pi$ can be found by using Moser-Trudinger's inequality. The situation in the supercritical regime $\la\ge 8\pi$ becomes subtler and the existence of solutions could depend on the topology and the geometry of the surface $S$ (or the domain). A degree argument has been proved in \cite{CL0,CL} by Chen and Lin, completing a program initiated by Li \cite{Li}, and has received a variational counterpart in \cite{Dja,Mal} by means of  improved forms of the Moser-Trudinger inequality. 

\medskip \noindent Equation \eqref{mfewt} is also related to \eqref{p12} when $\mathcal P=\sigma \delta_{1}+(1-\sigma)\delta_{-\tau }$ with $\tau \in[-1,1]$  and $\sigma\in[0,1]$. Furthermore, \eqref{mfewt} is the Euler-Lagrange equation of the functional
\begin{equation}\label{energy}
J_{\lambda_1,\la_2}(u)={1\over2}\int_S|\grad u|_g^2\, dv_g -
\lambda_1\log\lf(\int_S V_1 e^{u} dv_g\rg)- \lambda_2\log\lf(\int_S V_2
e^{-\tau u} dv_g\rg),\:\: u\in \bar H.
\end{equation}
If $\tau =1$ and $V_1=V_2\equiv 1$ problem \eqref{mfewt} reduces to mean field equation of the equilibrium turbulence, see \cite{bjmr,J0,JWY,OhSu,R} or its related sinh-Poisson version, see \cite{BaPi,BaPiWe,GP,JWY2,JWYZ}, which have received a considerable interest in recent years. Precisely, in \cite{OhSu} a Trudinger-Moser type inequality was proved: if
$\lambda_1,\lambda_2\in[0,8\pi)$, which can be called the subcritical case, then solutions to \eqref{mfewt} are the minimizers of $J_{\lambda_1,\lambda_2}$, since this functional is coercive; but if $\lambda_1,\lambda_2\in[0,8\pi]$ and either $\lambda_1=8\pi$ or $\lambda_2=8\pi$
then the functional $J_{\lambda_1,\lambda_2}$ still has lower bound but it is not coercive.  A minimization technique is no longer possible if $\la_i> 8\pi $ for some $i=1,2$ since $J_{\la_1,\la_2}$ becomes unbounded from below. In general, one needs to apply variational methods to obtain the existence of critical points (generally of saddle type) for $J_{\la_1,\la_2}$. Several results in the supercritical case can be found in \cite{R,Zhou0,Zhou}. A quantization property was derived in \cite{JWY2} for a blow-up sequence $\{u_n\}_n$ to \eqref{mfewt} with $\tau=1$, one has
\begin{equation}\label{m12}
m_k(p) = \lim_{r\to0}\lim_{n\to+\infty}\frac{\la_{k,n}\int_{B_r(p)}V_k e^{(-1)^{k-1} u_n}\,dv_g}{\int_S V_k e^{(-1)^{k-1} u_n}\,dv_g}\in 8\pi \N,\quad k=1,2,
\end{equation}
extending the corresponding ones for \eqref{mfe} in \cite{LSh} and for \eqref{mfewt} with $\tau=1$ and $V_1= V_2\equiv 1$ in \cite{JWYZ}.

\medskip \noindent Concerning the version of problem \eqref{mfewt} on bounded domains Pistoia and Ricciardi built in \cite{pr1} sequences of blowing-up solutions when $\tau  >0$ and $\lambda_1,\lambda_2\tau^2$ are close to $8\pi$, while in \cite{pr2} the same authors built an arbitrary large number of  sign-changing blowing-up solutions when $\tau  >0$ and $\lambda_1,\lambda_2\tau^2$ are close to suitable (not necessarily integer) multiples of $8\pi.$  Ricciardi and Takahashi in \cite{rt}  provided a complete blow-up picture for solution sequences of \eqref{mfewt}   and successively  in \cite{rtzz} Ricciardi et al. constructed min-max solutions  when $\lambda_1 \to 8\pi^+$ and   $\lambda_2 \to 0$ on a multiply connected domain (in this case the nonlinearity $e^{-\tau  u}$ 
  may  be treated as a lower-order term with respect to the main term $e^u$).    

\medskip \noindent In a compact Riemann surface $S$, a blow-up analysis in subcritical case $\la_1<8\pi$ and $\la_2<\frac{8\pi}{\tau^2}$, and supercritical case $\la_1<16\pi$ and $\la_2<\frac{16\pi}{\tau^2}$, characterizing the blow-up masses $m_k(p)$, $k=1,2$, defined similarly as in \eqref{m12}, has been obtained in \cite{j2}, when $0<\tau<1$. Furthermore, some existence results are deduced. The authors in \cite{rz} obtain the minimal blow-up masses and proved an existence result which generalize the one obtained in \cite{R} for $\tau=1$.

\medskip \noindent To the extend our knowledge, there are by now just few results concerning the existence of bubbling solutions to \eqref{mfewt} and its variants in different framework. For instance bubbling solutions have been constructed for a sinh-Poisson equation ($\tau=1$) on bounded domains in \cite{BaPi,BaPiWe}  with Dirichlet boundary condition and recently in \cite{FIT} with Robin boundary condition. Furthermore, recently in \cite{EFP} and \cite{F2}, the authors have constructed blowing-up solutions on pierced domains with Dirichlet boundary condition for any $\tau>0$. See also \cite{pr1,pr2} for generalizations to $\tau>0$ of results obtained in \cite{BaPi,GP} for $\tau=1$, respectively. The construction of sign-changing bubble tower solutions for sinh-Poisson type equations on pierced domains have been addressed in \cite{F3}.

\medskip \noindent By following some ideas presented in \cite{BaPi,EF}, we are interested in to construct bubbling solutions $u_{\la_1,\la_2}$ to \eqref{mfewt} with $m_1$ positive bubbles and $m_2$ negative bubbles suitable centered at $m=m_1+m_2$ different points of $S$ as both $\la_1\to
8\pi m_1$ and $\la_2\tau^2 \to8\pi m_2$, with $m_1\in\{0,\dots,m\}$. To this aim, introduce the Green function $G(x,p)$ with pole at $p \in S$ as the solution of
\begin{equation} \label{green}
\left\{ \begin{array}{ll} -\Delta_g G(\cdot,p)= \delta_{p}-\frac{1}{|S|} &\text{in $S$}\\
\int_S G(x,p)dv_g=0 &
\end{array} \right.
\end{equation}
where $\delta_p$ denote a Dirac mass in $p\in S$. Define for $\xi=(\xi_1,\dots,\xi_m)\in \tilde S^{m}\sm\Delta$ the functional
\begin{equation}\label{fim}
\begin{split}
\varphi_m^* (\xi) = &\ \frac{1}{4\pi}\sum_{j=1}^{m_1} \log V_1(\xi_j ) + \frac{1}{4\pi \tau^2}\sum_{j=m_1+1}^{m} \log V_2(\xi_j ) + \sum_{j=1}^{m_1} H(\xi_j,\xi_j) + {1\over\tau^2} \sum_{j=m_1+1}^{m} H(\xi_j,\xi_j)\\
& +\sum_{j=1}^{m_1}\sum_{i=1\atop i\not= j}^{m_1} G(\xi_i,\xi_j) - {2\over\tau} \sum_{j=1}^{m_1}\sum_{i=m_1+1}^{m} G(\xi_i,\xi_j) + {1\over\tau^2} \sum_{j=m_1+1}^{m}\sum_{i=m_1+1\atop i\not= j}^{m} G(\xi_i,\xi_j),
\end{split}
\end{equation}
where $H(x,\xi)$ is the regular part of $G(x,\xi)$, $\tilde S=\{V_1,V_2>0\}$ and $\Delta=\{\xi \in S^m:\,\xi_i=\xi_j \hbox{ for }i\not=j\}$ is the diagonal set in $S^m$ with $m=m_1+m_2$. Setting for $j\in \ml J_1:=\{1,\dots,m_1\}$ 
\begin{equation}\label{ro1}
\rho_j(x):=V_1(x)\exp\bigg(8\pi H(x,\xi_j)+ 8\pi \sum\limits_{i=1\atop i\ne j}^{m_1}
G(x,\xi_i)-{8\pi \over\tau}\sum_{i=m_1+1}^mG(x,\xi_i)\bigg),
\end{equation}
and for $j\in \ml J_2:=\{m_1+1,\dots,m\}$ 
\begin{equation}\label{ro2}
\rho_j(x):=V_2(x)\exp\bigg(8\pi H(x,\xi_j)- 8\pi \tau \sum_{i=1}^{m_1}
G(x,\xi_i)+ 8\pi \sum_{i=m_1+1\atop i\ne j}^mG(x,\xi_i)\bigg),
\end{equation}
both 	for $\xi \in S^m \setminus \Delta$ we introduce the notation
\begin{equation}\label{vk}
A_k^*(\xi)=4\pi\sum_{j\in \ml J_k}\left[\Delta_g
\rho_j(\xi_j)-2K(\xi_j)\rho_j(\xi_j)\right],\quad k=1,2
\end{equation}
where $K$ is the Gaussian curvature of $(S,g)$. The sign of $A_k^*$, $k=1,2$ allows us to obtain a first existence result of bubbling solutions and several consequences, see Theorem \ref{main1} and section \ref{secex}. Unfortunately, there are cases where the sign of $A_k^*(\xi)$ either $k=1$ or $k=2$ or both is not available. For instance the case $S=\mathbb T$, $V_1=V_2\equiv1$, $m_1=m_2=1$ and $\tau=1$. See also \cite{EF} for several examples in case $\la_2=0$, namely, $m_2=0$, that could be extended here. Following ideas presented in \cite{EF}, in all these situations, a more refined analysis is necessary. To this aim, introduce the quantities for $k=1,2$
\begin{eqnarray}\label{Bk}
B_k^*(\xi)&\hspace{-0.1cm}=&\hspace{-0.1cm} -2\pi \sum_{j\in \ml J_k} [\Delta_g  \rho_j(\xi_j) -2 K(\xi_j) \rho_j(\xi_j)] \log \rho_j(\xi_j) -\frac{A_k^*(\xi)}{2}\\
&\hspace{-0.1cm}&\hspace{-0.1cm}+ \lim_{r\to0} \bigg[8 \int_{S \setminus \cup_{j\in \ml J_k} B_r(\xi_j)}   V_1e^{8\pi(-\tau)^{k-1} \sum\limits_{j=1}^{m_1} G(x,\xi_j) + 8\pi (-\tau)^{k-2 }\sum\limits_{l=m_1+1}^m G(x,\xi_l) } dv_g-\frac{8\pi}{r^2} \sum\limits_{j\in \ml J_k}  \rho_j(\xi_j)\nonumber\\
&&\qquad\qquad-A_k^*(\xi) \log \frac{1}{r}\bigg]\nonumber
\end{eqnarray}
where $B_r(\xi)$ denotes the pre-image of $B_r(0)$ through the isothermal coordinate system at $\xi$. These types of quantities has been first used and derived by Chang, Chen and Lin in \cite{ChChL} in the study of the mean field equation on bounded domains with Dirichlet boundary condition; for the case of the torus see \cite{CLW}. Moreover, the constant $B_k^*(\xi)$ has also been used in the construction of non-topological condensates for the relativistic abelian Chern-Simons-Higgs model as the Chern-Simons parameter tends to zero, see \cite{DEFM,EF,LinYan}. Our main result states as follows.
\begin{theo} \label{main2}
Let $\mathcal{D} \subset \subset \tilde S^m \setminus \Delta$ be a stable critical set of $\varphi_m^*$. Assume that
\begin{equation} \label{cond0}
\hbox{either }A_1^*(\xi) >0 \:(<0 \hbox{ resp.)} \qquad \hbox{or} \qquad A_1^*(\xi)=0, \:B_1^*(\xi)>0 \: (<0 \hbox{ resp.)}
\end{equation}
and
\begin{equation} \label{cond1}
\hbox{either }A_2^*(\xi) >0 \:(<0 \hbox{ resp.)} \qquad \hbox{or} \qquad A_2^*(\xi)=0, \:B_2^*(\xi)>0 \: (<0 \hbox{ resp.)}
\end{equation}
do hold in a closed neighborhood $U$ of $\mathcal{D}$ in $\tilde S^m \setminus \Delta$. Then, for all $\lambda_1$ in a small right (left resp.) neighborhood of $8 \pi m_1$ and $\lambda_2\tau^2$ in a small right (left resp.) neighborhood of $8 \pi m_2$ there is a solution $u_{\lambda_1,\la_2}$ of \eqref{mfewt} which concentrate (along sub-sequences) at $m$ points, positively at $q_1,\dots, q_{m_1}$ and negatively at $q_{m_1+1},\dots,q_m$, in the sense
\begin{equation}\label{conc}
\frac{\la_1V_1e^{u_{\la_1,\la_2}}}{\int_S V_1e^{u_{\la_1,\la_2}} dv_g}\rightharpoonup 8\pi \sum_{j=1}^{m_1}\de_{q_j}\quad\text{ and }\quad \frac{\la_2\tau^{2}V_2e^{ -\tau u_{\la_1,\la_2}}}{\int_S V_2e^{-\tau u_{\la_1,\la_2}} dv_g}\rightharpoonup 8\pi \sum_{j=m_1+1}^m\de_{q_j}
\end{equation}
as simultaneously $\la_1 \to 8\pi m_1$ and $\la_2\tau^{2}\to 8\pi m_2$ for some $q \in \mathcal{D}$.
\end{theo}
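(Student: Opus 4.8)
The plan is to perform a finite-dimensional Lyapunov--Schmidt reduction in the spirit of \cite{BaPi,EF}. One first removes the nonlocal terms: setting $\varrho_1=\la_1\big(\int_S V_1e^{u}\,dv_g\big)^{-1}$ and $\varrho_2=\la_2\tau\big(\int_S V_2e^{-\tau u}\,dv_g\big)^{-1}$, problem \eqref{mfewt} is equivalent to $-\lab u=\varrho_1V_1e^{u}-\varrho_2\tau V_2e^{-\tau u}-\frac{\la_1-\la_2\tau}{|S|}$ together with the two normalizations defining $\varrho_1,\varrho_2$. Next one builds an approximate solution $W=W_{\de,\xi}$, depending on dilation parameters $\de=(\de_j)_{j=1}^m$ (with $\de_j\to0$) and on the concentration points $\xi=(\xi_j)\in\ti S^m\sm\Delta$, as a superposition of $m$ projected standard bubbles: near each $\xi_j$ with $j\in\ml J_1$ a positive bubble $\log\frac{8\de_j^2}{(\de_j^2+|y|^2)^2}$ (in the isothermal chart at $\xi_j$) corrected so as to exhibit away from $\xi_j$ the Green-type behaviour $8\pi H(\cdot,\xi_j)+8\pi\sum_{i\ne j,\,i\in\ml J_1}G(\cdot,\xi_i)-\frac{8\pi}{\tau}\sum_{i\in\ml J_2}G(\cdot,\xi_i)$ prescribed by \eqref{ro1}, and near each $\xi_j$ with $j\in\ml J_2$ an analogous negative bubble, scaled at the rate dictated by $\tau$, with outer behaviour prescribed by \eqref{ro2}; the parameters $\de_j$ are moreover linked to $\varrho_1,\varrho_2$ (hence to $\la_1,\la_2$) through natural relations ensuring that each bubble carries limit mass $8\pi$ and that $W$ is a good approximate solution. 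Looking for a solution of the form $u=W+\phi$, the problem becomes a nonlinear equation $L\phi=E+N(\phi)$, where $L$ is the linearization of the reformulated equation at $W$, $E=-\lab W-\varrho_1V_1e^{W}+\varrho_2\tau V_2e^{-\tau W}+\frac{\la_1-\la_2\tau}{|S|}$ (up to sign) is the error, and $N(\phi)$ gathers the superquadratic remainder.

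One then develops the linear theory. The operator $L$ possesses an approximate kernel of dimension $3m$, spanned by the functions $\fr_{\de_j}W$ and $\fr_{(\xi_j)_1}W,\fr_{(\xi_j)_2}W$, which are the pull-backs of the kernel of the model linearized Liouville operator $-\lap v=\frac{8}{(1+|y|^2)^2}\,v$ on $\R^2$. Restricted to the orthogonal complement of this kernel (for a suitable inner product) $L$ is invertible between appropriate weighted spaces, with inverse of norm essentially $O(|\log\de|)$; this is established, as usual, by a contradiction/blow-up argument that reduces to the nondegeneracy of the standard bubble. Combined with the estimate $\|E\|_*=O(\de|\log\de|)$ in the chosen norm, a contraction mapping argument produces, for every admissible $(\de,\xi)$, a unique small solution $\phi=\phi_{\de,\xi}$ of the projected (auxiliary) equation, depending smoothly on the parameters and satisfying $\|\phi_{\de,\xi}\|_*=O(\de|\log\de|)$.

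It remains to solve the bifurcation (reduced) equation, which amounts to finding a critical point of $\ml F(\de,\xi):=J_{\la_1,\la_2}\big(W_{\de,\xi}+\phi_{\de,\xi}\big)$, with $J_{\la_1,\la_2}$ as in \eqref{energy}; any such critical point yields a genuine solution of \eqref{mfewt}. The key is the sharp asymptotic expansion
\[
\ml F(\de,\xi)=c_0(\la_1,\la_2)+c_1\,\varphi_m^*(\xi)+\ml G_1(\de,\xi)+\ml G_2(\de,\xi)+o(1),
\]
uniformly for $\xi$ in the closed neighbourhood $U$ of $\ml D$ as $\la_1\to8\pi m_1$ and $\la_2\tau^2\to8\pi m_2$, where $c_1\ne0$ is an explicit constant and $\ml G_k$ collects the $\de_j$-dependent ($j\in\ml J_k$) corrections, whose stationarity in $\de_j$ is governed at leading order by the sign of $A_k^*(\xi)$ and --- precisely when $A_k^*(\xi)=0$ --- at the next order by the sign of $B_k^*(\xi)$, the Chang--Chen--Lin renormalized constant of \eqref{Bk}. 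Under the hypotheses \eqref{cond0}--\eqref{cond1}, for each fixed $\xi\in U$ the map $\de\mapsto\ml G_1(\de,\xi)+\ml G_2(\de,\xi)$ has a critical point which is stable (isolated, with nonzero local degree) and therefore persists after the $o(1)$ perturbation; eliminating $\de$ leaves a reduced functional of $\xi$ that is a $C^1$-small perturbation of $c_1\,\varphi_m^*(\xi)$ on $U$. Since $\ml D\subset\subset\ti S^m\sm\Delta$ is a stable critical set of $\varphi_m^*$, a standard degree/deformation argument then yields a critical point $(\de_{\la_1,\la_2},\xi_{\la_1,\la_2})$ with $\xi_{\la_1,\la_2}\to q\in\ml D$ along subsequences, hence a solution $u_{\la_1,\la_2}=W+\phi$. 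The weak convergences in \eqref{conc}, with mass $8\pi$ at each concentration point, follow at once from the construction of $W$ and the smallness of $\phi$.

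The hardest part will be the sharp expansion of $\ml F$ carried out to the order at which $A_k^*$ and $B_k^*$ become visible. This requires very precise estimates of the nonlocal quantities $\int_S V_1e^{u}\,dv_g$ and $\int_S V_2e^{-\tau u}\,dv_g$ evaluated at $W+\phi$: one must isolate the contribution of each bubble core (which produces the $\frac{1}{4\pi}\log V_1$, $\frac{1}{4\pi\tau^2}\log V_2$ and $H(\xi_j,\xi_j)$ terms of $\varphi_m^*$ and, by Taylor expanding $\rho_j$ and using the conformal factor, the curvature term $\lab\rho_j(\xi_j)-2K(\xi_j)\rho_j(\xi_j)$ appearing in $A_k^*$ and $B_k^*$), the interaction between distinct bubbles of equal and of opposite sign (the $G(\xi_i,\xi_j)$ and $-\frac{2}{\tau}G(\xi_i,\xi_j)$ cross terms of $\varphi_m^*$), and the outer/neck region, whose renormalized limit --- obtained after subtracting the $\frac{8\pi}{r^2}\sum_{j\in\ml J_k}\rho_j(\xi_j)$ and $A_k^*(\xi)\log\frac1r$ divergences exactly as in \eqref{Bk} --- is what defines $B_k^*(\xi)$. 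Keeping all these expansions uniform on the closed neighbourhood $U$, and in particular treating the degenerate regime $A_k^*\equiv0$ in which one must descend one further order in $\de$ while still retaining a stable critical point, is the technical heart of the argument; the asymmetry $\tau\ne1$, forcing the two bubble families to concentrate at different rates, only adds bookkeeping and no essential new difficulty.
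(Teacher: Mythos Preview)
Your overall strategy (build a multi-bubble ansatz, solve the projected problem by a contraction argument, reduce to a finite-dimensional variational problem, and locate the critical point by first eliminating the dilation variables via the signs of $A_k^*$, $B_k^*$ and then using stability of $\mathcal D$ for $\varphi_m^*$) is exactly the plan the paper follows. The difference lies in how the dilation variables are set up, and it is not just bookkeeping.

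In your sketch you keep $m$ independent scales $\de_j$ and declare the approximate kernel of $L$ to be $3m$-dimensional. The paper does not: because of the two nonlocal normalizations $\int_S V_1e^u$ and $\int_S V_2e^{-\tau u}$, the limiting operator at each bubble is $\hat L(\phi)=\Delta\phi+\tfrac{8}{(1+|y|^2)^2}\big(\phi-\bar\phi\big)$, whose kernel contains the constant $1$. Modding out constants (we work in $\bar H$) and observing that all the positive bubbles share a single average while all the negative bubbles share another, the dilation part of the approximate kernel collapses from $m$ directions to \emph{two}: $PZ_1=\sum_{l\le m_1}PZ_{0l}$ and $PZ_2=\sum_{l>m_1}PZ_{0l}$. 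Accordingly the paper slaves the individual scales by $\de_j^2=\mu_k^2\de^2\rho_j(\xi_j)$ (with $k=1$ or $2$ according to the sign of the bubble) and reduces to only the two parameters $\mu_1,\mu_2$. The $\rho_j(\xi_j)$ factor is not optional: it is precisely what makes the error $R$ in \eqref{R} of order $\de|\nabla\varphi_m^*(\xi)|_g+\de^{2-\sigma}|\log\de|$; with $m$ free $\de_j$'s the leading error would be $O(1)$ unless you rediscover the same constraint. Likewise the reduced expansion is derived in the paper only in the variables $(\mu_1,\mu_2,\xi)$ (Theorem~\ref{fullexpansionenergy}), and the $\mu$-elimination step uses explicitly the $C^2(\mathbb R^2)$ expansion of $\partial_{\mu_k}E$ and $\partial_{\mu_k\mu_k}E$. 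Your $\mathcal G_k$ would need to be a function of a single scalar $\mu_k$, not of the $|\mathcal J_k|$ scales $\de_j$, for the rest of the argument to go through as written.

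So there is no wrong idea in your outline, but to turn it into a proof you should either (i) freeze $\varrho_1,\varrho_2$ as genuine extra unknowns (so the linearized operator becomes the local $\tilde L=\Delta+\tfrac{8}{(1+|y|^2)^2}$ with a true $3m$-dimensional kernel) and recover the two normalizations at the end, or (ii) adopt the paper's two-parameter ansatz from the start. The paper takes route (ii), and it is the cleaner one here.
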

\noindent Notice that along with \eqref{conc} there hold $(-\tau)^{k-1}u_{\la_1,\la_2} - \log\int_S V_ke^{(-\tau)^{k-1}u_{\la_1,\la_2} }\to -\infty$ in $C_{\text{loc}}(S\sm\{q_1,\dots,q_m\})$ and
$$\sup_{\ml O_j}\bigg((-\tau)^{k-1}u_{\la_1,\la_2} - \log\int_S V_ke^{(-\tau)^{k-1}u_{\la_1,\la_2} }\bigg) \to +\infty$$
as simultaneously $\la_1 \to 8\pi m_1$ and $\la_2\tau^{2}\to 8\pi m_2$, for any neighborhood $\ml O_j$ of $q_j$ in $S$ with $k=1$ for $j=1,\dots,m_1$ and $k=2$ for $j=m_1+1,\dots,m$. Hence, we get that $u_{\la_1,\la_2}$ concentrates positively at $q_1,\dots,q_{m_1}$ and negatively at $q_{m_1+1},\dots,q_m$ as simultaneously $\la_1 \to 8\pi m_1$ and $\la_2\tau^{2}\to 8\pi m_2$. As in \cite{EF}, the notion of stability we are using here is the one introduced in \cite{Li0}, see Definition \ref{stable} below. Conditions (\ref{cond0})-\eqref{cond1} on a neighborhood of $\DD$ are required to deal a with stable critical set $\DD$ in the sense below. Arguing as in Remark 4.5 in \cite{EF}, the same conclusion of Theorem \ref{main2} follows  under the validity of conditions (\ref{cond0})-\eqref{cond1} just on $\DD=\{\xi_0\}$, where $\xi_0$ is a non-degenerate local minimum/maximum point of $\varphi_m^*$. Similarly, Theorem \ref{main2} is also valid in the special case $|A_k^*(\xi)|=O(|\nabla \varphi_m^*(\xi)|_g)$, $k=1,2$ in a neighborhood of $\DD$ and $B_k^*(\xi)>0$ in $\DD$.

\medskip Now, we can address the case $S=\mathbb T$, $V_1=V_2\equiv1$, $m_1=m_2=1$ and $\tau=1$. When $\mathbb T$ is a rectangle, the constants like $B_k^*(\xi)$, $k=1,2$, has been used by Chen, Lin nd Wang \cite{CLW} in the computation of the Leray-Schauder degree. Due to $H(x,x)$ is constant in $\mathbb T$, we deduce that $\varphi_2^*(\xi)=-2G(\xi_1,\xi_2)+\text{const.}$. Also, it is known that the Green's function satisfies $G(\xi_1,\xi_2)=G(\xi_1-\xi_2,0)$ and the function $G(\cdot,0)$ has exactly three non-degenerate critical points $q_1$, $q_2$ (saddle points) and $q_3$ (minimum point). According to \eqref{Bk} we have that for $i,k\in\{1,2\}$
$$B_k^*(\xi)=\lim_{r\to0}\lf[8\int_{\mathbb T\sm B_r(\xi_k)} e^{8\pi G(x,\xi_k)- 8\pi G(x,\xi_i)} - {8\pi\over r^2}e^{8\pi H(\xi_k,\xi_k)-8\pi G(\xi_i,\xi_k)}\rg], \ \ i\ne k.$$
Assuming that $\mathbb T=-\mathbb T$ it follows that $B_1^*(\xi)=B_2^*(\xi)$, $\xi=(\xi_1,\xi_2)$, since $G(z,0)=G(-z,0)$. Furthermore, it is known that $B_1^*(\xi)>0$ when either $\xi_1-\xi_2=q_1$ or $\xi_1-\xi_2=q_2$ and $B_1^*(\xi)<0$ when either $\xi_1-\xi_2=q_3$. By Theorem \ref{main2} we deduce the existence of
\begin{itemize}
\item two distinct families of solutions, for $\lambda_1,\la_2$ in a small right neighborhood of $8\pi$, concentrating positively at $\xi_1$ and negatively at $\xi_2$ with either $\xi_1-\xi_2=q_1$ or $\xi_1-\xi_2=q_2$ as $\lambda_1\to 8\pi$ and $\la_2 \to 8\pi $;
\item one family of solutions, for $\lambda_1$, $\la_2$ in a small left neighborhood of $8\pi$, concentrating positively at $\xi_1$ and negatively at $\xi_2$ with $\xi_1-\xi_2=q_3$ as $\lambda_1\to 8\pi$ and $\la_2 \to 8\pi $.
\end{itemize}
\medskip

The case $m_2=0$, namely, as $\la_2\tau^2\to 0^+$, can be also addressed by this approach. Thus, we have that \eqref{mfewt} can be seen as a perturbation of \eqref{mfe}. In this case the nonlinearity $e^{-\tau  u}$ 
is treated as a lower-order term with respect to the main term $e^u$. For simplicity we denote $A(\xi)$ and $B(\xi)$ instead $A_1^*(\xi)$ and $B_1^*(\xi)$ with $m_1=m$ and $\ml J_2=\varnothing$, so that we have the following result.

\begin{theo} \label{main3}
Let $\mathcal{D} \subset \subset \tilde S^m \setminus \Delta$ be a stable critical set of $\varphi_m^*$. Assume that
\begin{equation} \label{condm20}
\hbox{either }A(\xi) >0 \:(<0 \hbox{ resp.)} \qquad \hbox{or} \qquad A(\xi)=0, \:B(\xi)>0 \: (<0 \hbox{ resp.)}
\end{equation}
do hold in a closed neighborhood $U$ of $\mathcal{D}$ in $\tilde S^m \setminus \Delta$. Then, for all $\lambda_1$ in a small right (left resp.) neighborhood of $8 \pi m_1$ and $\lambda_2\tau^2$ in a small right neighborhood of $0$ there is a solution $u_{\lambda_1,\la_2}$ of \eqref{mfewt} which concentrate positively (along sub-sequences) at $m$ points $q_1,\dots, q_m$
$$\frac{\la_1V_1e^{u_{\la_1,\la_2}}}{\int_S V_1e^{u_{\la_1,\la_2}} dv_g}\rightharpoonup 8\pi \sum_{j=1}^{m}\de_{q_j}\quad\text{in measure sense for some $q \in \mathcal{D}$}$$
$$\text{and}\quad \frac{\la_2\tau^{2}V_2e^{ -\tau u_{\la_1,\la_2}}}{\int_S V_2e^{-\tau u_{\la_1,\la_2}} dv_g}\to 0\quad\text{uniformly in $S$. }$$
 
\end{theo}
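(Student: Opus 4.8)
The plan is to run the Lyapunov--Schmidt reduction used in the proof of Theorem~\ref{main2}, specialized to $m_2=0$, $\ml J_2=\varnothing$, $m_1=m$, and to treat the nonlinearity $e^{-\tau u}$ as a genuine lower-order term, as announced; in fact the scheme becomes a small perturbation of the one for the pure mean field equation \eqref{mfe} carried out in \cite{EF}. Concretely, I first fix a small parameter $\mu>0$ and scales $\delta_j^2=d_j(\xi)\,\mu$, $j=1,\dots,m$, with $d_j(\xi)$ built out of $\rho_j(\xi_j)$ as in \eqref{ro1} (the factor $\exp(-\tfrac{8\pi}{\tau}\sum_{i\in\ml J_2}G)$ being absent here), and I form the ansatz $W_\xi=\sum_{j=1}^m PU_{\delta_j,\xi_j}$, the sum of the projections onto $(S,g)$ of the $m$ standard Liouville bubbles, the precise link between $\mu$ and $\la_1-8\pi m$ being fixed at the end. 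Writing $u=W_\xi+\phi$ and passing to the Green representation, \eqref{mfewt} turns into a fixed point problem $L_\xi\phi=N_\xi(\phi)+R_\xi$ for the error $\phi$ in a suitable weighted space, where $L_\xi$ is the linearization at $W_\xi$, $N_\xi$ collects the quadratic-and-higher terms and $R_\xi$ is the error of the ansatz.

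The decisive new point is to control the contribution to $R_\xi$ (and to $N_\xi$) of the piece $-\la_2\tau\big(\tfrac{V_2e^{-\tau u}}{\int_SV_2e^{-\tau u}\,dv_g}-\tfrac1{|S|}\big)$. Away from the points $\xi_j$ one has $W_\xi(x)=8\pi\sum_jG(x,\xi_j)+O(\log\mu)$ with the $O(\log\mu)$-constant tending to $-\infty$, so $e^{-\tau W_\xi}$ does blow up in the far field; however this blow-up is, to leading order, the same constant factor everywhere, and since $\tau>0$ the weight $e^{-8\pi\tau\sum_jG(\cdot,\xi_j)}$ \emph{vanishes} at each $\xi_j$ and is therefore bounded on $S$, uniformly for $\xi$ in the compact neighborhood $U$ of $\DD$ (where the $\xi_j$ stay mutually distinct and inside $\tilde S=\{V_1,V_2>0\}$). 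Hence the normalization $\int_SV_2e^{-\tau W_\xi}\,dv_g$ absorbs the divergence and
\[
\frac{\la_2\tau^2\,V_2(x)\,e^{-\tau W_\xi(x)}}{\int_SV_2e^{-\tau W_\xi}\,dv_g}
=\la_2\tau^2\,\frac{V_2(x)\,e^{-8\pi\tau\sum_jG(x,\xi_j)}}{\int_SV_2\,e^{-8\pi\tau\sum_jG}\,dv_g}\,\big(1+o(1)\big)
=O(\la_2\tau^2)\ \longrightarrow\ 0
\]
uniformly on $S$, and the same bound persists for $u=W_\xi+\phi$ once $\phi$ is known to be small in $L^\infty$. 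Thus this term is $o(1)$ in $R_\xi$ in the relevant norm and negligible in all later expansions, and it is exactly this estimate that yields the last displayed conclusion of the statement, $\tfrac{\la_2\tau^2V_2e^{-\tau u_{\la_1,\la_2}}}{\int_SV_2e^{-\tau u_{\la_1,\la_2}}\,dv_g}\to0$ uniformly in $S$.

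With this at hand the reduction proceeds as in \cite{EF} and in the proof of Theorem~\ref{main2}: one establishes the uniform a priori estimate and invertibility of $L_\xi$ modulo the finite-dimensional kernel generated by the translations and dilations of the $m$ bubbles, solves the auxiliary equation by a contraction argument to produce $\phi=\phi_\xi$ depending smoothly on $\xi$ with $\|\phi_\xi\|\to0$, and then reduces \eqref{mfewt} to finding a critical point of the finite-dimensional functional $\ml F(\xi):=J_{\la_1,\la_2}(W_\xi+\phi_\xi)$ (critical points of $\ml F$ correspond to genuine solutions, the Lagrange multipliers vanishing there). A $C^1$-expansion, locally uniform in $\xi\in U$, gives $\ml F(\xi)=c_0+c_1(\la_1)+c_2\,\varphi_m^*(\xi)+o(1)$ with $\varphi_m^*$ as in \eqref{fim} for $m_1=m$ and $c_2\ne0$, the $\la_2$-dependent part being absorbed into $o(1)$ by the previous step; on the other hand $A(\xi)$ and $B(\xi)=B_1^*(\xi)$ enter the scalar equation fixing the scale $\mu$ (equivalently the $\delta_j$), the Pohozaev-type identity attached to the dilation kernel forcing $\la_1-8\pi m$ to have the sign of $A(\xi)$, or, when $A\equiv0$ on $U$, the sign of $B(\xi)$ --- which is precisely why \eqref{condm20} is imposed with the matching one-sided neighborhood of $8\pi m$.

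Finally, since $\DD$ is a stable critical set of $\varphi_m^*$ in the sense of \cite{Li0} (Definition~\ref{stable}), the same holds for the $C^1$-small perturbation $\ml F$ of $c_2\varphi_m^*+\text{const}$, so $\ml F$ has a critical point $q\in\DD$; together with the compatible sign of $\la_1-8\pi m$ prescribed by \eqref{condm20}, this produces, for $\la_1$ in the appropriate one-sided neighborhood of $8\pi m$ and every small $\la_2\tau^2>0$, a solution $u_{\la_1,\la_2}=W_q+\phi_q$ of \eqref{mfewt}. Its concentration properties follow from the profile of $W_q$ and the smallness of $\phi_q$, while the uniform vanishing of the second density is the estimate of the second paragraph. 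I expect the main obstacle to be exactly that second paragraph: a priori $e^{-\tau u}$ is \emph{unbounded} on $S$, so one must check that its normalization keeps the ratio under control and, crucially, that the bound is uniform for $\xi$ near $\DD$ and stable under the perturbation $\phi_\xi$ produced by the fixed point step; granted this, everything else is a routine perturbation of the $\la_2=0$ construction of \cite{EF}.
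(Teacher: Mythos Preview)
Your overall strategy is exactly the paper's: specialize the reduction to $m_2=0$, show the $\lambda_2$--term is a genuine lower order contribution, and then run the argument of \cite{EF}/Theorem~\ref{main2} with a single dilation parameter $\mu$. Two points, however, need correction.

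First, your justification of the key estimate in the second paragraph is based on an incorrect picture of the ansatz. You write that away from the $\xi_j$'s one has $W_\xi(x)=8\pi\sum_jG(x,\xi_j)+O(\log\mu)$ with the constant tending to $-\infty$, so that $e^{-\tau W_\xi}$ ``blows up in the far field'' and is then tamed by the normalization. This is not what happens: by Lemma~\ref{ewfxi}, $PU_{\delta_j,\xi_j}=8\pi G(\cdot,\xi_j)+O(\delta^2|\log\delta|)$ locally uniformly off $\xi_j$, so $W_\xi$ is \emph{bounded} on $S$ from below (indeed $W_\xi\to+\infty$ near each $\xi_j$ and $W_\xi\to 8\pi\sum_jG$ elsewhere). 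Consequently $V_2e^{-\tau W_\xi}$ is uniformly bounded on $S$, and $\int_S V_2e^{-\tau W_\xi}\,dv_g\geq\eta_0>0$ because the integrand is bounded below away from the concentration set; this is precisely how the paper obtains \eqref{la2v2}. Your displayed conclusion $\frac{\lambda_2\tau^2 V_2 e^{-\tau W_\xi}}{\int_S V_2 e^{-\tau W_\xi}}=O(\lambda_2\tau^2)$ is therefore correct, but for the reason ``bounded numerator, denominator bounded below'', not ``common blow-up factor cancels''.

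Second, you treat $\lambda_2\tau^2\to 0$ as an independent small parameter, but for the $\lambda_2$--piece to fit into the error analysis at the right order (so that $\|R\|_*$ still satisfies \eqref{re} and the fixed point/energy expansions go through unchanged), one must couple it to the scale: the paper imposes $0<\lambda_2\tau^2\le C\delta^2|\log\delta|$ together with $|\lambda_1-8\pi m|\le C\delta^2|\log\delta|$ (see \eqref{replam20}), and then chooses $\lambda_1-8\pi m=\pm\delta^2$ as in the proof of Theorem~\ref{main2}. With this coupling, the extra contribution to the reduced energy is exactly $-\lambda_2\log\big(\int_S V_2 e^{-8\pi\tau\sum_j G(\cdot,\xi_j)}dv_g\big)$, which is independent of $\mu$ and only $O(\delta^2|\log\delta|)$ in $\xi$, hence harmless in both the $\partial_\mu E=0$ step and the stability argument. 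Once you make this coupling explicit, the rest of your sketch (single $\mu$, $\partial_\mu E=0$ governed by $A(\xi)$, $B(\xi)$ under \eqref{condm20}, then stability of $\mathcal D$) is the paper's proof.
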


\noindent Notice that a similar result can be obtained in case $m_1=0$ and $m_2=m$, namely, as $\la_1\to 0^+$ and $\la_2\tau^2\to 8\pi m$, and $u_{\la_1,\la_2}$ concentrates negatively at $m$ different points of $S$. The same conclusion of Theorem \ref{main3} follows: on one hand, under the validity of condition \eqref{condm20} just on $\DD=\{\xi_0\}$, where $\xi_0$ is a non-degenerate local minimum/maximum point of $\varphi_m^*$; and on the other hand, in the special case $|A(\xi)|=O(|\nabla \varphi_m^*(\xi)|_g)$ in a neighborhood of $\DD$ and $B(\xi)>0$ in $\DD$. See proof of Theorem 3.2 and Remark 4.5 in \cite{EF} for more details. Several examples for Theorem \ref{main3} can be derived from each example provided in \cite{EF} for the case $\la_2=0$.

\medskip\noindent
The paper is organized as follows: Some consequences and examples are presented in section \ref{secex}.  In Section \ref{approx}, we construct a first approximation to a solution to \eqref{mfewt} with the required properties and we estimate the size of the error of approximation with appropriate norms. In Section \ref{variat} we describe the scheme of our proofs, by stating the principal results we need, and we give the proof of our Theorem \ref{main2}. Section \ref{sec4} is devoted to the computation of the expansion of the energy functional on the first approximation we constructed in Section \ref{approx}. The proof of Theorem \ref{main3} is done in Section \ref{pthm3}. Sections \ref{appeA} and \ref{appeB} are devoted to prove the intermediate results we state in Section \ref{variat}. 


\section{Consequences and examples}\label{secex}

\noindent In this section we present several consequences of Theorem \ref{main2} and some examples that illustrate our results in the sphere $\mathbb S^2$ and flat two-torus $\mathbb T$. A special case of Theorem \ref{main2} is the following:
\begin{theo} \label{main1}
Let $\mathcal{D} \subset \subset \tilde S^m \setminus \Delta$ be a stable critical set of $\varphi_m^*$. Assume that $A_1^*(\xi) >0$ ($<0$ resp.) and $A_2^*(\xi) >0$ ($<0$ resp.) for all $\xi\in\ml{D}$. Then, for all $\lambda_1$ in a small right (left resp.) neighborhood of $8 \pi m_1$ and $\lambda_2$ in a small right (left resp.) neighborhood of $\dfrac{8 \pi m_2}{\tau^2}$ there is a solution $u_{\lambda_1,\la_2}$ of \eqref{mfewt} which concentrate (along sub-sequences) at $m$ points $q_1,\dots, q_m$ in the sense \eqref{conc} for some $q \in \mathcal{D}$.
\end{theo}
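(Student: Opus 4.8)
The plan is to obtain Theorem \ref{main1} directly from Theorem \ref{main2} by checking that its hypotheses are a particular instance of conditions \eqref{cond0}--\eqref{cond1}. First I would note that the maps $\xi \mapsto A_k^*(\xi)$, $k=1,2$, are continuous on $\tilde S^m \setminus \Delta$: from \eqref{ro1}--\eqref{ro2}, each $\rho_j$ depends smoothly on $x$ and continuously on $\xi$ away from the diagonal, since the regular part $H$ and the Green function $G$ are smooth off the diagonal and $V_1,V_2$ are smooth; hence $\rho_j(\xi_j)$ and $\Delta_g \rho_j(\xi_j)$ vary continuously with $\xi$, and $K$ is smooth, so by \eqref{vk} each $A_k^*$ is continuous on $\tilde S^m \setminus \Delta$.

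Next, since $\mathcal{D} \subset\subset \tilde S^m \setminus \Delta$ is compact and $A_1^*(\xi) > 0$ (resp. $<0$) on $\mathcal{D}$, continuity yields a $\delta>0$ and a closed neighborhood $U_1$ of $\mathcal{D}$ in $\tilde S^m \setminus \Delta$ on which $A_1^*(\xi) \ge \delta > 0$ (resp. $\le -\delta < 0$); the same argument produces a closed neighborhood $U_2$ for $A_2^*$. Setting $U = U_1 \cap U_2$, a closed neighborhood of $\mathcal{D}$ in $\tilde S^m \setminus \Delta$, conditions \eqref{cond0} and \eqref{cond1} hold on $U$, in each case through the first alternative (the case $A_k^*=0$ never occurring), consistently with the chosen sign convention.

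Then I would invoke Theorem \ref{main2}: with $\mathcal{D}$ a stable critical set of $\varphi_m^*$ and \eqref{cond0}--\eqref{cond1} valid on $U$, for every $\lambda_1$ in a small right (resp. left) neighborhood of $8\pi m_1$ and every value of $\lambda_2 \tau^2$ in a small right (resp. left) neighborhood of $8\pi m_2$ there is a solution $u_{\lambda_1,\lambda_2}$ of \eqref{mfewt} concentrating as in \eqref{conc} at some $q \in \mathcal{D}$. Since $\tau>0$ is fixed, $\lambda_2 \tau^2$ lies in a small neighborhood of $8\pi m_2$ precisely when $\lambda_2$ lies in the corresponding small neighborhood of $\dfrac{8\pi m_2}{\tau^2}$; rephrasing the conclusion of Theorem \ref{main2} in terms of $\lambda_2$ rather than $\lambda_2\tau^2$ gives exactly the statement of Theorem \ref{main1}, with the concentration property \eqref{conc} inherited verbatim.

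There is essentially no analytic obstacle at this stage — all the substance is contained in Theorem \ref{main2}. The only point that requires a little care is the compactness/openness step: one must ensure that the sign of each $A_k^*$ is preserved on a full closed neighborhood of $\mathcal{D}$, not merely on $\mathcal{D}$ itself, which is exactly where the assumption $\mathcal{D} \subset\subset \tilde S^m \setminus \Delta$ enters (it keeps $\mathcal{D}$ bounded away from both the diagonal and the zero sets of $V_1,V_2$, so that $A_1^*,A_2^*$ remain continuous and finite on a neighborhood of $\mathcal{D}$).
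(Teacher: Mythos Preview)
Your proposal is correct and matches the paper's approach: Theorem \ref{main1} is presented there simply as a special case of Theorem \ref{main2}, with no separate proof given. Your continuity argument extending the strict sign of $A_k^*$ from $\mathcal D$ to a closed neighborhood $U$ is exactly the (implicit) step needed to verify that the first alternative in \eqref{cond0}--\eqref{cond1} holds, so nothing further is required.
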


\noindent  The notion of stability we are using here is the following:
\begin{dfn}
\label{stable} A critical set $\mathcal{D} \subset \subset \tilde S^m \setminus \Delta$ of $\varphi_m$ is stable if for any closed neighborhood $U$ of $\mathcal{D}$ in  $\tilde S^m \setminus \Delta$ there exists $\delta>0$ such that, if
$\|G-\varphi_m\|_{C^1(U)}\leq \delta$, then $G$ has at least one critical point in $U$. In particular, the minimal/maximal set of $\varphi_m$ is stable (if $\varphi_m$ is not constant) as well as any isolated c.p. of $\varphi_m$ with non-trivial local degree.
\end{dfn}

\medskip \noindent  Notice that from the definition of $\rho_j$ in \eqref{ro1}-\eqref{ro2} and $A_k^*(\xi)$ in \eqref{vk}, it is readily checked that
$$A_k^*(\xi)= 4\pi \sum_{j\in \ml J_k} \rho_j(\xi_j) [\Delta_g \log V_1(\xi_j)+(-\tau)^{k-1}\frac{8\pi }{|S|} \lf(m_1-{m_2\over \tau}\rg)-2K(\xi_j)],\quad k=1,2$$
 for $\xi$ a c.p. of $\varphi_m^*$, in view of $\nabla \rho_j(\xi_j)=0$ for all $j=1,\dots,m$. If $V_1\ge0$ and $V_2\ge0$ in $S$, then the function $\varphi_2^*$ with $m_1=m_2=1$ always attains its maximum value in $\tilde S^2\sm\Delta$ and the maximal set is clearly stable. Let us stress that $V_1$ and $V_2$ can vanish at some points of $S$. Thus, we have deduced the following fact.
\begin{cor}\label{cor1}
Assume that $V_i\ge 0$ in $S$ for $i=1,2$. If either $\sup_S[2K-\lab\log V_1]<\frac{8\pi}{|S|}\big(1-{1\over \tau}\big)$ or $\inf_S[2K-\lab\log V_1]>\frac{8\pi}{|S|}\big(1-{1\over \tau}\big)$ and either $\sup_S[2K-\lab\log V_2]<\frac{8\pi}{|S|}\big(1- \tau \big)$ or $\inf_S[2K-\lab\log V_2]>\frac{8\pi}{|S|}\big(1-\tau \big)$ then there exist solutions $u_{\la_1,\la_2}$ to \eqref{mfewt} which concentrate at two points, positively at $q_1$ and negatively at $q_2$, 
in the sense \eqref{conc} as $\la_1\to 8\pi$ and $\la_2\tau^2\to 8\pi$, where $(q_1,q_2)$ is a maximum of $\varphi_2^*$ in $\ti S^2\sm\Delta$.
\end{cor}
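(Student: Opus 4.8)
The plan is to read Corollary \ref{cor1} as the special case $m=2$, $m_1=m_2=1$ of Theorem \ref{main1}, taking $\mathcal{D}$ to be the set of maximum points of $\varphi_2^*$ on $\tilde S^2\setminus\Delta$. For these parameters all the intra-cluster Green sums in \eqref{fim} are empty, so
\[
\varphi_2^*(\xi_1,\xi_2)=\frac{1}{4\pi}\log V_1(\xi_1)+\frac{1}{4\pi\tau^2}\log V_2(\xi_2)+H(\xi_1,\xi_1)+\frac{1}{\tau^2}H(\xi_2,\xi_2)-\frac{2}{\tau}G(\xi_1,\xi_2),
\]
which, as observed just above, diverges to $-\infty$ as $\xi$ approaches the diagonal $\Delta$ (because $G(\xi_1,\xi_2)\to+\infty$ as $\xi_1\to\xi_2$) and as $V_1(\xi_1)\to0$ or $V_2(\xi_2)\to0$ (the remaining terms staying bounded, $H$ being smooth on $S\times S$). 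Hence $\varphi_2^*$ attains its maximum at an interior point, and its maximal set $\mathcal{D}$ is a nonempty critical set, compactly contained in $\tilde S^2\setminus\Delta$, and stable in the sense of Definition \ref{stable} (as the maximal set of a non-constant functional). It then only remains to identify the signs of $A_1^*$ and $A_2^*$ on $\mathcal{D}$.

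To this end, every $\xi=(\xi_1,\xi_2)\in\mathcal{D}$ is a critical point of $\varphi_2^*$, so $\nabla\rho_j(\xi_j)=0$ and the reduced expression for $A_k^*$ at a critical point recorded above applies; recalling from \eqref{ro1}--\eqref{ro2} that $\rho_j$ is built from $V_k$ for $j\in\mathcal{J}_k$, and that $(-\tau)^{k-1}(m_1-m_2/\tau)$ equals $1-\frac{1}{\tau}$ for $k=1$ and $1-\tau$ for $k=2$, it becomes
\[
A_1^*(\xi)=4\pi\rho_1(\xi_1)\Big[\Delta_g\log V_1(\xi_1)-2K(\xi_1)+\frac{8\pi}{|S|}\Big(1-\frac{1}{\tau}\Big)\Big],\qquad
A_2^*(\xi)=4\pi\rho_2(\xi_2)\Big[\Delta_g\log V_2(\xi_2)-2K(\xi_2)+\frac{8\pi}{|S|}(1-\tau)\Big].
\]
Since $\rho_j(\xi_j)>0$ on $\tilde S^2\setminus\Delta$, the sign of $A_1^*(\xi)$ coincides with the sign of $\frac{8\pi}{|S|}\big(1-\frac{1}{\tau}\big)-\big[2K(\xi_1)-\Delta_g\log V_1(\xi_1)\big]$, and the sign of $A_2^*(\xi)$ with that of $\frac{8\pi}{|S|}(1-\tau)-\big[2K(\xi_2)-\Delta_g\log V_2(\xi_2)\big]$. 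Hence $\sup_S[2K-\Delta_g\log V_1]<\frac{8\pi}{|S|}\big(1-\frac{1}{\tau}\big)$ forces $A_1^*>0$ on $\mathcal{D}$, $\inf_S[2K-\Delta_g\log V_1]>\frac{8\pi}{|S|}\big(1-\frac{1}{\tau}\big)$ forces $A_1^*<0$ on $\mathcal{D}$, and symmetrically the two alternatives in the hypothesis on $V_2$ (with threshold $\frac{8\pi}{|S|}(1-\tau)$) each force a definite sign for $A_2^*$ on $\mathcal{D}$; so $A_1^*$ and $A_2^*$ are of constant nonzero sign on $\mathcal{D}$ in each of the four cases.

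It then suffices to apply Theorem \ref{main1} to this $\mathcal{D}$, choosing the right or the left branch independently for $\lambda_1$ near $8\pi$ and for $\lambda_2$ near $\frac{8\pi}{\tau^2}$ according to the signs of $A_1^*$ and of $A_2^*$ respectively (the two alternatives being independent, as they are in \eqref{cond0}--\eqref{cond1}): this yields a solution $u_{\lambda_1,\lambda_2}$ of \eqref{mfewt} concentrating positively at $q_1$ and negatively at $q_2$, in the sense \eqref{conc}, with $(q_1,q_2)\in\mathcal{D}$ a maximum point of $\varphi_2^*$, as $\lambda_1\to8\pi$ and $\lambda_2\tau^2\to8\pi$. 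I do not expect a genuine obstacle beyond this bookkeeping; the real input is the reduced formula for $A_k^*$ at critical points (granted in the excerpt) together with the elementary sign analysis above, and the only points meriting a line of care are that the $k=2$ correction carries $\Delta_g\log V_2$ rather than $\Delta_g\log V_1$, and the properness of $\varphi_2^*$ near $\Delta$ and near $\{V_1V_2=0\}$, which is precisely what allows $V_1,V_2$ to vanish at points of $S$.
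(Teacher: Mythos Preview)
Your proof is correct and follows the same route as the paper: take $\mathcal{D}$ to be the maximal set of $\varphi_2^*$ (which exists by the properness argument you spell out), use the simplified expression for $A_k^*$ at critical points, read off the signs from the hypotheses, and invoke Theorem \ref{main1}. You are also right that the $k=2$ bracket must carry $\Delta_g\log V_2$ rather than $\Delta_g\log V_1$; the displayed formula in the paper has a typo there, but your computation and the statement of the corollary are consistent.
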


When $S=\mathbb S^2$ we have that $K=\dfrac{4\pi}{|\mathbb S^2|}$, so that, for $V_1=V_2\equiv 1$ and any $\tau>0$, Corollary \ref{cor1} then provides the existence of blow-up solutions $u_{\la_1,\la_2}$ concentrating at two points as $\la_1\to 8\pi$ and $\la_2\tau^2\to 8\pi$, where $\la_1$ and $\la_2\tau^2$ belongs to a small \emph{left} neighborhood of $8\pi$. In case of a flat two-torus $S=\mathbb T$, $K=0$, so that for $V_1=V_2\equiv 1$ and any $\tau>0$, $\tau\ne 1$, Corollary \ref{cor1} then provides the existence of blow-up solutions $u_{\la_1,\la_2}$ concentrating at two points as $\la_1\to 8\pi$ and $\la_2\tau^2\to 8\pi$, where $\la_1$ belongs to a small \emph{right} (\emph{left} resp.) neighborhood of $8\pi $ if $\tau>1$ ($<1$ resp.) and $\la_2\tau^2$ belongs to a small \emph{left} (\emph{right} resp.) neighborhood of $8\pi$. However, the case $S=\mathbb T$, $V_1=V_2\equiv 1$, $m_1=m_2=1$ and $\tau=1$ is an example for which $A_1^*$ and $A_2^*$ vanishes in $\mathbb T ^2\sm\Delta$ and in particular at c.p.'s.

\medskip Let us mention some examples where $V_1$ and $V_2$ vanish at some points of $S$. Precisely, assume that 
$$V_1(x)=e^{-4\pi \sum\limits_{i=1}^{l_1}n_{1,i}G(x,p_{1,i})}\qquad\text{and}\qquad V_2(x)=e^{-4\pi \sum\limits_{i=1}^{l_2}n_{2,i}G(x,p_{2,i})},$$
with $n_{1,i},n_{2,i}>0$ and $p_{1,i},p_{2,j}\in S$, $i=1,\dots,l_1$ and $j=1,\dots,l_2$ respectively. The zero sets are $\{p_{1,1},\dots,p_{1,l_1}\}$ for $V_1$ and $\{p_{2,1},\dots,p_{2,l_2}\}$ for $V_2$. So, for $m_1=m_2=1$, $m=2$ we have that
$$\varphi_2^*(\xi)=- \sum\limits_{i=1}^{l_1}n_{1,i}G(\xi_1,p_{1,i}) - {1\over \tau^2} \sum\limits_{j=1}^{l_2}n_{2,j}G(\xi_2,p_{2,j})-{2\over \tau}G(\xi_1,\xi_2),$$
and if $\xi$ is a c.p. of $\varphi_2^*$ then
$$A_k^*(\xi)=4\pi\rho_k(\xi_k)\lf[ -\frac{4\pi}{|S|} \sum\limits_{i=1}^{l_k}n_{k,i}+{8\pi\over |S|} \Big(1- \tau^{2k-3}\Big)-2K(\xi_k)\rg],\quad k=1,2.$$
In particular, if $S=\mathbb S^2$  then Corollary \ref{cor1} provides the existence of blow-up solutions $u_{\la_1,\la_2}$ concentrating at two points as $\la_1\to 8\pi$ and $\la_2\tau^2\to 8\pi$ when $\sum_{i=1}^{l_1}n_{1,i}\ne 1-\frac{2}\tau$ and $\sum_{j=1}^{l_2}n_{2,j}\ne 1-2\tau$. We deduce the same conclusion when $S=\mathbb T$ and  $\sum_{i=1}^{l_1}n_{1,i}\ne 2-\frac{2}\tau$ and $\sum_{j=1}^{l_2}n_{2,j}\ne 2-2\tau$. Let us stress that there is no restriction on $n_{1,i},n_{2,j}$'s if $\tau=1$.

\medskip Now, consider the case $m_1=m\ge 2$ and $m_2=1$, namely, $\la_1$ close to $8\pi m $ and $\la_2\tau^2$ close to $8\pi$. Roughly speaking, if $u_{\la_1,\la_2}$ concentrates negatively at $q$ then
$$\la_2\tau\lf(\frac{V_2e^{ -\tau u_{\la_1,\la_2}}}{\int_S V_2e^{-\tau u_{\la_1,\la_2}} dv_g} - {1\over |S|}\rg)\quad\text{ behaves like }\quad 4\pi\cdot {2\over\tau}\lf(\de_q - {1\over |S|}\rg)\quad\text{ as $\la\tau^2\to 8\pi$}$$
and equation \eqref{mfewt}  resembles the singular mean field equation
\begin{equation*}
-\Delta_g v=\la\lf({h e^{v}\over\int_{S}h e^{v} dv_g}-
\frac{1}{|S|}\rg) - 4\pi \al \lf(\de_q - {1\over |S|}\rg) \qquad\text{in $S$},
\end{equation*}
with $\al=\frac2\tau$. According to a result of D'Aprile and Esposito \cite[Theorem 1.4]{DaE}, it follows that the functional
\begin{equation*}\label{fimm1}
\begin{split}
\varphi_{m+1}^* (\xi) = &\ \frac{1}{4\pi}\sum_{j=1}^{m} \log V_1(\xi_j ) + \frac{1}{4\pi \tau^2} \log V_2(\xi_{m+1}) + \sum_{j=1}^{m_1} H(\xi_j,\xi_j) + {1\over\tau^2} H(\xi_{m+1},\xi_{m+1} )\\
& +\sum_{j=1}^{m}\sum_{i=1\atop i\not= j}^{m} G(\xi_i,\xi_j) - {2\over\tau} \sum_{j=1}^{m} G(\xi_j,\xi_{m+1}) ,
\end{split}
\end{equation*}
has a $C^1$-stable critical value for $\xi_{m+1}\in S$ fixed under the assumptions $S\ne \mathbb S^2,\mathbb{RP}^2$ and $\frac{2}\tau\ne 1,\dots,m-1$. Thus, we deduce the next result.

\begin{cor}\label{cor2}
Assume that $V_i>0$ in $S$ for $i=1,2$, $S\ne \mathbb S^2,\mathbb{RP}^2$ and $\dfrac{2}\tau\ne 1,\dots,m-1$. If either $\sup_S[2K-\lab\log V_1]<\frac{8\pi}{|S|}\big(m-{1\over \tau}\big)$ or $\inf_S[2K-\lab\log V_1]>\frac{8\pi}{|S|}\big(m-{1\over \tau}\big)$ and either $\sup_S[2K-\lab\log V_2]<\frac{8\pi}{|S|}\big(1-m\tau \big)$  or $\inf_S[2K-\lab\log V_2]>\frac{8\pi}{|S|}\big(1-m\tau \big)$ then there exist solutions $u_{\la_1,\la_2}$ to \eqref{mfewt} which concentrate at $m+1$ points, positively at $q_1,\dots, q_{m}$ and negatively at $q_{m+1}$, in the sense \eqref{conc} as $\la_1\to 8\pi m$ and $\la_2\tau^2\to 8\pi$, where $(q_1,\dots,q_{m+1})$ is a max-min critical point of $\varphi_{m+1}^*$ in $S^{m+1}\sm\Delta$.
\end{cor}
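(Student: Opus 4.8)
The plan is to obtain Corollary~\ref{cor2} from Theorem~\ref{main2} applied with $m_1=m$ positive bubbles and $m_2=1$ negative bubble, so that the relevant configuration space is $\tilde S^{m+1}\setminus\Delta$, the reduced functional is $\varphi_{m+1}^*$, and $\tilde S=S$ because $V_1,V_2>0$. Two things must be supplied: a stable critical set $\mathcal D\subset\subset S^{m+1}\setminus\Delta$ of $\varphi_{m+1}^*$, and the validity of \eqref{cond0}--\eqref{cond1} on a closed neighborhood of $\mathcal D$. For the critical set, fix $\xi_{m+1}\in S$; by \cite[Theorem 1.4]{DaE}, recalled before the statement, under the hypotheses $S\neq\mathbb S^2,\mathbb{RP}^2$ and $\frac2\tau\notin\{1,\dots,m-1\}$ the function $(\xi_1,\dots,\xi_m)\mapsto\varphi_{m+1}^*(\xi_1,\dots,\xi_m,\xi_{m+1})$ has a $C^1$-stable critical value $c(\xi_{m+1})$ of min-max type. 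Since $\varphi_{m+1}^*(\cdot,\xi_{m+1})$ depends on the parameter $\xi_{m+1}$ in a $C^1$ fashion, uniformly on compact subsets of $S^m\setminus\Delta$ that avoid the set $\{\xi_j=\xi_{m+1}\}$, the map $\xi_{m+1}\mapsto c(\xi_{m+1})$ is continuous on the compact surface $S$.

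The next point is that, for each $(\xi_1,\dots,\xi_m)$, the term $-\frac2\tau\sum_{j=1}^m G(\cdot,\xi_{m+1})$ in $\varphi_{m+1}^*$ tends to $-\infty$ as $\xi_{m+1}$ approaches any $\xi_j$, while $V_2>0$ and $H(\xi_{m+1},\xi_{m+1})$ remains bounded; hence $\xi_{m+1}\mapsto\varphi_{m+1}^*(\xi_1,\dots,\xi_m,\xi_{m+1})$ attains its maximum at an interior point of $S\setminus\{\xi_1,\dots,\xi_m\}$ and all competing configurations stay in a fixed compact subset of $S^{m+1}\setminus\Delta$. Optimizing $c$ over $\xi_{m+1}\in S$ then produces a $C^1$-stable critical value of the full functional $\varphi_{m+1}^*$ of the max-min type described in the statement, realized on a critical set $\mathcal D\subset\subset S^{m+1}\setminus\Delta$ which is stable in the sense of Definition~\ref{stable}. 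Carrying out this combination carefully — upgrading a stable critical value for every frozen $\xi_{m+1}$ to a single $C^1$-stable critical set of $\varphi_{m+1}^*$ lying compactly away from $\Delta$ — is the main obstacle, and it is precisely where $S\neq\mathbb S^2,\mathbb{RP}^2$, $\frac2\tau\notin\{1,\dots,m-1\}$ and $V_i>0$ are used.

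It remains to check \eqref{cond0}--\eqref{cond1} near $\mathcal D$. Each $\xi\in\mathcal D$ is a critical point of $\varphi_{m+1}^*$, so $\nabla\rho_j(\xi_j)=0$ and the identity recorded just after Definition~\ref{stable}, specialized to $m_1=m$, $m_2=1$ and simplified via $-\tau(m-\frac1\tau)=1-m\tau$, gives
\begin{align*}
A_1^*(\xi)&=4\pi\sum_{j=1}^m\rho_j(\xi_j)\Big[\Delta_g\log V_1(\xi_j)+\tfrac{8\pi}{|S|}\big(m-\tfrac1\tau\big)-2K(\xi_j)\Big],\\
A_2^*(\xi)&=4\pi\,\rho_{m+1}(\xi_{m+1})\Big[\Delta_g\log V_2(\xi_{m+1})+\tfrac{8\pi}{|S|}\big(1-m\tau\big)-2K(\xi_{m+1})\Big].
\end{align*}
Since every $\rho_j>0$, the hypothesis $\sup_S[2K-\Delta_g\log V_1]<\frac{8\pi}{|S|}(m-\frac1\tau)$ makes each bracket in $A_1^*(\xi)$ positive, hence $A_1^*>0$ on $\mathcal D$; the companion $\inf$-hypothesis gives $A_1^*<0$ on $\mathcal D$, and likewise the two hypotheses on $V_2$ fix the sign of $A_2^*$ on $\mathcal D$. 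By continuity of $A_1^*,A_2^*$ and compactness of $\mathcal D$, these signs persist on a closed neighborhood $U$ of $\mathcal D$ in $S^{m+1}\setminus\Delta$, so \eqref{cond0}--\eqref{cond1} hold on $U$ in the plain alternative (the constants $B_k^*$ are not needed). Theorem~\ref{main2} then provides, for $\lambda_1$ in a one-sided neighborhood of $8\pi m$ and $\lambda_2\tau^2$ in a one-sided neighborhood of $8\pi$ — the two sides being dictated by the signs of $A_1^*$ and $A_2^*$ — a solution $u_{\lambda_1,\lambda_2}$ of \eqref{mfewt} concentrating positively at $q_1,\dots,q_m$ and negatively at $q_{m+1}$, with $(q_1,\dots,q_{m+1})\in\mathcal D$, exactly in the sense \eqref{conc}. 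This establishes the corollary.
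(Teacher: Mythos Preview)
Your proposal is correct and follows essentially the same route as the paper: the paper's own argument consists solely of the paragraph preceding the corollary, which invokes \cite[Theorem~1.4]{DaE} to obtain a $C^1$-stable critical value of $\varphi_{m+1}^*(\cdot,\xi_{m+1})$ for each fixed $\xi_{m+1}$ and then states the result. Your write-up simply fleshes out the max--min combination over $\xi_{m+1}$ and the sign verification for $A_1^*,A_2^*$ (via the identity after Definition~\ref{stable}, specialized to $m_1=m$, $m_2=1$) more explicitly than the paper does.
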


When $S=\mathbb T$ and $V_1=V_2\equiv 1$, for any $\tau>0$, $m\tau\ne 1$ and $\tau\notin\{2,1,{2\over 3},\dots,{2\over m-1}\}$, Corollary \ref{cor1} then provides the existence of blow-up solutions $u_{\la_1,\la_2}$ concentrating at $m+1$ points as $\la_1\to 8\pi m$ and $\la_2\tau^2\to 8\pi$, where $\la_1$ belongs to a small right (left resp.) neighborhood of $8\pi m$ if $m\tau>1$ ($<1$ resp.) and $\la_2\tau^2$ belongs to a small left (right resp.) neighborhood of $8\pi$. Notice that a similar result can be obtained in case $m_1=1$ and $m_2=m$, namely, $\la_1$ close to $8\pi$ and $\la_2\tau^2$ close to $8\pi m$.

\medskip\noindent 
Observe that on one hand, we generalize existence results of blowing-up solutions for mean field equations \eqref{mfe} in \cite{EF} to an asymmetric problem \eqref{mfewt}. And on the other hand, we perform, in a compact Riemann surface $S$, a similar construction done for a sinh-Poisson equation in bounded domains with Dirichlet boundary conditions by \cite{BaPi} and extended to an asymmetric case in \cite{pr1}. Both problems in \cite{BaPi,pr1} do not contain any potential $V_k$ and the existence of $C^1$-stable critical points of the corresponding $\varphi_m^*$ implies the existence of blowing-up solutions. However, to prove our results is not enough to assume the existence of $C^1$-stable critical points of $\varphi_m^*$ in \eqref{fim}. Admissibility conditions in terms of quantities either $A_k^*$'s or $B_k^*$'s have to be used, in the same spirit of \cite{EF}. 
After completion of this work, we have learned that in \cite{AhBaFi} the existence of $C^1$-stable critical points of vortex type Hamiltonians, including $\varphi_m^*$ in \eqref{fim}, has been proved for a surface $S$ which is not homeomorphic to the sphere nor the projective plane. 

\medskip
Finally, we point out that the type of arguments used to obtain our results have been also developed in several previous works by various authors. Let us quote a few papers from the vast literature concerning singular perturbation problems with nonlinearities of exponential type \cite{ChI,dmr,EMP,EW,FM}. 


\section{Approximation of the solution}\label{approx}
\noindent The main idea to construct approximating solutions of \eqref{mfewt}, as in \cite{EF}, is to use as ``basic cells'' the functions
\begin{equation*}
u_{\delta,\xi}(x)=u_0 \Big(\frac{|x-\xi|}{\delta}\Big)-2\log
\delta, \qquad \de>0,\: \xi\in\R^2,\end{equation*} where $\ds u_0(r)=\log\frac{8}{(1+r^2)^2}.$ They are all the solutions of
\begin{equation*}
\left\{ \begin{array}{ll}\Delta u+e^{u}=0 &\text{in $\R^2$}\\
\int_{\R^2} e^u <\infty, & \end{array} \right.
\end{equation*}
and do satisfy the following concentration property: $e^{u_{\delta,\xi}}\rightharpoonup 8\pi\delta_\xi$
 in measure sense as $\delta \to 0$. We will use now
isothermal coordinates to pull-back $u_{\delta,\xi}$ in $S$. Let us recall that every Riemann surface $(S,g)$ is locally
conformally flat, and the local coordinates in which $g$ is
conformal to the Euclidean metric are referred to as isothermal
coordinates (see for example the simple existence proof provided
by Chern \cite{Chern}). For every $\xi \in S$ it amounts to find a
local chart $y_\xi$, with $y_\xi(\xi)=0$, from a neighborhood of
$\xi$ onto $B_{2r_0}(0)$ (the choice of $r_0$ is independent of
$\xi$) in which $g=e^{\hat \varphi_\xi(y_\xi(x))}dx$, where $\hat
\varphi_\xi \in C^\infty(B_{2r_0}(0),\mathbb{R})$. In particular,
$\hat \varphi_\xi$ relates with the Gaussian curvature $K$ of
$(S,g)$ through the relation:
\begin{equation} \label{equationvarphi}
\Delta \hat \varphi_\xi(y) =-2K(y_\xi^{-1}(y)) e^{\hat
\varphi_\xi(y)} \qquad \hbox{ for }y \in B_{2r_0}(0).
\end{equation}
We can also assume that $y_\xi$, $\hat \varphi_\xi$ depends
smoothly in $\xi$ and that $\hat \varphi_\xi(0)=0$, $\nabla \hat
\varphi_\xi(0)=0$. We now pull-back $u_{\delta,0}$ in $\xi \in S$, for $\delta>0$, by simply setting $\ds U_{\delta,\xi}(x)=u_{\delta,0}(y_\xi(x))=\log \frac{8\delta^2}{(\delta^2+|y_\xi(x)|^2)^2}$ for $x
\in y_\xi^{-1}(B_{2r_0}(0))$. Letting $\chi\in
C_0^\infty(B_{2r_0}(0))$ be a radial cut-off function so that
$0\le\chi\le 1$, $\chi\equiv 1$ in $B_{r_0}(0)$, we introduce the
function $PU_{\de,\xi}$ as the unique solution of
\begin{equation}\label{ePu}
\left\{ \begin{array}{ll} -\Delta_g PU_{\de,\xi} (x)=\chi_\xi(x)
e^{-\varphi_\xi(x)} e^{U_{\de,\xi}(x)}-\frac{1}{|S|}\int_S
\chi_\xi e^{-\varphi_\xi} e^{U_{\de,\xi}} dv_g &\text{in }S\\
\int_S PU_{\de,\xi} dv_g=0,
\end{array}\right.
\end{equation}
where $\chi_\xi(x)=\chi(|y_\xi(x)|)$ and $\varphi_\xi(x)=\hat
\varphi_\xi(y_\xi(x))$. Notice that the R.H.S. in (\ref{ePu}) has
zero average and smoothly depends in $x$, and then (\ref{ePu}) is
uniquely solvable by a smooth solution $PU_{\de,\xi}$.

\medskip \noindent Let us recall the transformation law for $\Delta_g$ under
conformal changes: if $\tilde g=e^{\varphi} g$, then
\begin{equation} \label{laplacian} \Delta_{\tilde g}=e^{-\varphi} \Delta_g.\end{equation}
Decompose now the Green function $G(x,\xi)$, $\xi \in S$, as $\ds G(x,\xi)=-\frac{1}{2\pi} \chi_\xi(x) \log |y_\xi(x)|+H(x,\xi),$ and by (\ref{green}) then deduce that
\begin{equation*}
\left\{ \begin{array}{ll} -\Delta_g H= - \frac{1}{2\pi} \lab
\chi_\xi  \,\log |y_\xi(x)| -\frac{1}{\pi}\langle \grad
\chi_\xi,\grad \log
|y_\xi(x)| \rangle_g-\frac{1}{|S|} &\text{in $S$}\\
\int_S H(\cdot,\xi)\, dv_g=\frac{1}{2\pi} \int_S \chi_\xi \log
|y_\xi(\cdot)| dv_g.&
\end{array} \right.
\end{equation*}
We have used that $\ds \Delta_g \log |y_\xi(x)|= e^{-\hat \varphi_\xi(y)}
\Delta \log|y| \Big|_{y=y_\xi(x)}=2\pi \delta_\xi$ in view of
(\ref{laplacian}). For $r\leq 2r_0$ define
$B_r(\xi)=y_\xi^{-1}(B_r(0))$, $A_{r}(\xi)=B_{r}(\xi) \sm
B_{r/2}(\xi)$, and set
$$f_\xi= {\lab\chi_\xi \over |y_\xi(x)|^2} +2\Big\langle \grad\chi_\xi,\grad |y_\xi(x)|^{-2} \Big\rangle_g+{2\over |S|}
\int_{\mathbb{R}^2} {\chi'(|y|)\over |y|^3}\, dy.$$
Setting $\Psi_{\de,\xi}(x)= PU_{\de,\xi}(x)-\chi_\xi [U_{\delta,\xi}-\log(8\de^2)]-8\pi H(x,\xi),$ by the definition of $f_\xi$ we then have that $-\Delta_g \Psi_{\de,\xi}=-2\de^2 f_\xi+O(\de^4)$ in $S$  so that
$$\int_S f_\xi dv_g=\frac{1}{2\delta^2} \int_S \Delta_g \Psi_{\delta,\xi} dv_g+O(\delta^2)=O(\delta^2)$$
for all $\delta>0$, and hence $\int_S f_\xi dv_g=0$. Therefore,
$F_\xi$ is well defined as the unique solution of
\begin{equation}\label{d2t} \left\{ \begin{array}{ll}-\lab
F_\xi=f_\xi &\text{in }S\\
\int_S F_\xi dv_g=0.&
\end{array}\right. \end{equation}
We have the following asymptotic expansion of $PU_{\de,\xi}$ as
$\delta \to 0$, as shown in \cite{EF}:
\begin{lem}\label{ewfxi}
The function $PU_{\delta,\xi}$ satisfies
$$PU_{\delta,\xi}=\chi_\xi \lf[U_{\delta,\xi}-\log(8\delta^2)\rg]+
8\pi H(x,\xi)+\alpha_{\delta,\xi}-2\delta^2 F_\xi+O(\delta^4|\log
\delta|)$$
uniformly in $S$, where $F_\xi$ is given in \eqref{d2t} and
$$\alpha_{\delta,\xi}=-{4\pi\over|S|} \delta^2 \log \delta +2{\delta^2\over|S|}\lf(\int_{\mathbb{R}^2}
\chi(|y|) \frac{e^{\hat \varphi_\xi(y)}-1}{|y|^2}dy+ \pi-
\int_{\mathbb{R}^2} {\chi'(|y|) \log |y|\over |y| } dy \rg).$$ In
particular, there holds
$$PU_{\delta,\xi}=8\pi G(x,\xi)-2{\de^2 \chi_\xi \over
|y_\xi(x)|^2}+\alpha_{\delta,\xi}-2 \delta^2 F_\xi+O(\delta^4|\log
\delta|)$$
locally uniformly in $S \sm\{\xi\}$.
\end{lem}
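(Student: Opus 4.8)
The plan is to control the correction $\Psi_{\de,\xi}:=PU_{\de,\xi}-\chi_\xi[U_{\de,\xi}-\log(8\de^2)]-8\pi H(\cdot,\xi)$ already introduced above, for which we know $-\lab\Psi_{\de,\xi}=-2\de^2 f_\xi+O(\de^4)$ uniformly in $S$ and $\int_S f_\xi\,dv_g=0$, so that $F_\xi$ from \eqref{d2t} is well defined. Setting $R_{\de,\xi}:=\Psi_{\de,\xi}+2\de^2 F_\xi$ one has $-\lab R_{\de,\xi}=O(\de^4)$ uniformly in $S$, with zero average; hence from the Green representation $R_{\de,\xi}(x)-\bra R_{\de,\xi}=\int_S G(x,y)(-\lab R_{\de,\xi})(y)\,dv_g(y)$ and $\sup_x\int_S|G(x,y)|\,dv_g(y)<\infty$ I get $R_{\de,\xi}=\bra R_{\de,\xi}+O(\de^4)$ uniformly, with $\bra R_{\de,\xi}:=\frac1{|S|}\int_S R_{\de,\xi}\,dv_g$. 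Using $\int_S PU_{\de,\xi}\,dv_g=\int_S F_\xi\,dv_g=0$, $\int_S H(\cdot,\xi)\,dv_g=\frac1{2\pi}\int_S\chi_\xi\log|y_\xi|\,dv_g$ and $U_{\de,\xi}-\log(8\de^2)=-2\log(\de^2+|y_\xi|^2)$, the problem reduces to expanding a single scalar integral,
\begin{equation*}
\bra R_{\de,\xi}=-\frac1{|S|}\int_S\chi_\xi\big[(U_{\de,\xi}-\log(8\de^2))+4\log|y_\xi|\big]\,dv_g=\frac2{|S|}\int_S\chi_\xi\log\Big(1+\frac{\de^2}{|y_\xi|^2}\Big)\,dv_g.
\end{equation*}

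The heart of the matter is to expand this integral up to $O(\de^4|\log\de|)$. In isothermal coordinates at $\xi$ it equals $\frac2{|S|}\int_{B_{2r_0}(0)}\chi(|y|)\log(1+\de^2/|y|^2)\,e^{\hat\varphi_\xi(y)}\,dy$, and I will split $e^{\hat\varphi_\xi}=1+(e^{\hat\varphi_\xi}-1)$. For the summand $1$ I intend to use the elementary identity $\int_{B_{2r_0}(0)}\log(1+\de^2/|y|^2)\,dy=\pi\de^2\big(1+2\log(2r_0/\de)\big)+O(\de^4)$ (obtained after the rescaling $y=\de z$ by integrating $\log(1+1/t)$ in closed form), and then subtract the contribution of $1-\chi$, which is supported away from the origin, where $\log(1+\de^2/|y|^2)=\de^2/|y|^2+O(\de^4)$; an integration by parts in the radial variable identifies $\int_{B_{2r_0}(0)}(1-\chi(|y|))|y|^{-2}\,dy$ with $2\pi\log(2r_0)+\int_{\R^2}\chi'(|y|)|y|^{-1}\log|y|\,dy$, the two $\log(2r_0)$ terms cancel, and this piece contributes $-2\pi\de^2\log\de+\pi\de^2-\de^2\int_{\R^2}\chi'(|y|)|y|^{-1}\log|y|\,dy+O(\de^4)$. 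For the summand $e^{\hat\varphi_\xi}-1$, which is $O(|y|^2)$ near $0$ thanks to $\hat\varphi_\xi(0)=0$, $\grad\hat\varphi_\xi(0)=0$, the estimate $\log(1+\de^2/|y|^2)=\de^2/|y|^2+O(\de^4/|y|^4)$ on $|y|\gtrsim\de$ together with a crude bound on the core $|y|\lesssim\de$ yields $\de^2\int_{\R^2}\chi(|y|)\frac{e^{\hat\varphi_\xi(y)}-1}{|y|^2}\,dy+O(\de^4|\log\de|)$. Adding the two pieces gives exactly $\bra R_{\de,\xi}=\alpha_{\de,\xi}+O(\de^4|\log\de|)$, hence $R_{\de,\xi}=\alpha_{\de,\xi}+O(\de^4|\log\de|)$, and going back to the definition of $\Psi_{\de,\xi}$ one obtains the first expansion uniformly in $S$. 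For the ``in particular'' expansion away from $\xi$ I will use that $8\pi G(x,\xi)=-4\chi_\xi\log|y_\xi|+8\pi H(x,\xi)$, while on compact subsets of $S\sm\{\xi\}$ one has $\chi_\xi[U_{\de,\xi}-\log(8\de^2)]=-4\chi_\xi\log|y_\xi|-2\de^2\chi_\xi|y_\xi|^{-2}+O(\de^4)$ (since there $|y_\xi|$ is bounded below, and trivially outside the chart), and substitute.

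I expect the delicate step to be the integral asymptotics of the second paragraph: isolating the coefficient of $\de^2\log\de$ and the precise constants ($\pi$ and the two integrals) that assemble into $\alpha_{\de,\xi}$ requires carefully separating the competing contributions of the singular core $|y|\lesssim\de$, the cutoff annulus $|y|\sim r_0$, and the conformal correction $e^{\hat\varphi_\xi}-1$; moreover all estimates must be uniform for $\xi$ ranging over a compact set, which relies on the smooth dependence of $y_\xi$ and $\hat\varphi_\xi$ on $\xi$ and on the $\xi$-independent choice of $r_0$. The elliptic estimate for $R_{\de,\xi}$ and the algebra in the first and third paragraphs are routine.
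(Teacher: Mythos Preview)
Your proposal is correct and follows exactly the route the paper sets up and then defers to \cite{EF}: the paper introduces $\Psi_{\de,\xi}$, shows $-\lab\Psi_{\de,\xi}=-2\de^2 f_\xi+O(\de^4)$ and defines $F_\xi$, and your argument (elliptic estimate for $R_{\de,\xi}=\Psi_{\de,\xi}+2\de^2 F_\xi$ followed by the expansion of its mean value) is precisely the intended completion. The integral asymptotics you outline for $\bra R_{\de,\xi}$ match the constant $\alpha_{\de,\xi}$ term by term.
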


\noindent The ansatz will be constructed as follows. Given
$m\in \mathbb{N}$, let us consider distinct points
$\xi_j\in S$ and $\delta_j>0$, $j=1,\dots,m$. In order to have a good
approximation, we will assume that $\exists\, C_0>1\,:$
\begin{equation}\label{repla0}
\delta_j^2= \begin{cases}
\mu_1^2\delta^2 \rho_j(\xi_j) &\text{ for $j\in\{1,\dots,m_1\}$} \\
\mu_2^2\delta^2 \rho_j(\xi_j) &\text{ for $j\in\{m_1+1,\dots,m\}$} 
\end{cases},\ \ \ \text{with } \ 0< \mu_i\le C_0,\quad i=1,2
\end{equation}
\begin{equation}\label{repla1}
|\lambda_1-8\pi m_1 |\le C_0 \de^2|\log
\de| \quad\text{and}\quad|\lambda_2\tau^2-8\pi m_2 |\le C_0 \de^2|\log
\de|,
\end{equation}
where $\de >0$, $m_1\in\{1,\dots,m-1\}$, $m_2=m-m_1$ and $\rho_j$ is given by \eqref{ro1}-\eqref{ro2}. Up to take $r_0$ smaller, we assume
that the points $\xi_j$'s are well separated and $V_1(\xi_j)$, $V_2(\xi_j)$ are uniformly away from zero, namely, we choose
$\xi=(\xi_1,\dots,\xi_{m})\in\Xi$,
where
\begin{equation*} 
\Xi=\{(\xi_1,\dots,\xi_{m}) \in S^{m} \mid
d_g(\xi_i,\xi_j)\geq
4r_0\: \text{ and }\: V_1(\xi_j),\: V_2(\xi_j)\ge r_0\:\:\forall\:i,j=1,\dots,m,\:i\not=j\}.
\end{equation*}
Denote $U_j:= U_{\delta_j,\xi_j}$ and
$W_j=PU_j$, $j=1,\dots,m$, where $P$ is the projection
operator defined by \eqref{ePu}. Thus, our approximating solution
is $\ds W(x)=\sum_{j=1}^{m_1} W_j(x)-{1\over \tau}\sum_{j=m_1+1}^{m} W_j(x)$, parametrized by $(\mu,\xi) \in \ml{M} \times \Xi$, with $\mu=(\mu_1,\mu_2)$ and $\ml{M}=(0,C_0]\times (0,C_0] $. Notice that for $r_0$ small enough we have
that $\ml{D}\subset\Xi\subset \tilde S^{m}\sm\Delta$. We
will look for a solution $u$ of \eqref{mfewt} in the form
$u=W+\phi$, for some small remainder term $\phi$. In terms of
$\phi$, the problem \eqref{mfewt} is equivalent to find $\phi\in
\bar H$ so that
\begin{equation}\label{ephi}
L(\phi)=-[R+N(\phi)] \qquad\text{ in $S$},
\end{equation}
where the linear operator $L$ is defined as
\begin{equation}\label{ol}
L(\phi) = \Delta_g \phi + \sum_{i=1}^2 \lambda_i\tau^{2(i-1)} {V_i(x)e^{(-\tau)^{i-1} W}\over\int_S V_ie^{(-\tau)^{i-1} W}dv_g}\lf(\phi
- {\int_{S} V_ie^{(-\tau)^{i-1} W}\phi dv_g \over\int_S V_ie^{(-\tau)^{i-1} W}dv_g} \rg),
\end{equation}
the nonlinear part $N$ is given by
\begin{equation}\label{nlt}
N(\phi)=N_1(\phi)-N_2(\phi)
\end{equation}
with
\begin{equation}\label{ni}
\begin{split}
N_i(\phi)=&\,\lambda_i \tau^{i-1} \bigg({V_ie^{(-\tau)^{i-1} (W+\phi)}\over\int_S
V_ie^{(-\tau)^{i-1} (W+\phi) }dv_g}-{(-\tau)^{i-1} V_ie^{(-\tau)^{i-1} W}\over\int_S
V_i e^{(-\tau)^{i-1} W}dv_g}\lf[\phi-\frac{\int_{S} V_ie^{(-\tau)^{i-1} W}\phi dv_g}{\int_S
V_i e^{(-\tau)^{i-1} W} dv_g}\rg]\\
&\,-{V_ie^{(-\tau)^{i-1} W}\over\int_S V_i e^{(-\tau)^{i-1} W}dv_g }\bigg)
\end{split}
\end{equation}
for $i=1,2$ and the approximation rate of $W$ is encoded in
\begin{equation}\label{R} R=\Delta_g W+\lambda_1
\lf({V_1(x)e^{W}\over\int_S V_1e^{W}dv_g} - {1\over |S|}\rg)-\lambda_2\tau
\lf({V_2(x)e^{-\tau W}\over\int_S V_2e^{-\tau W}dv_g} - {1\over |S|}\rg).
\end{equation}
Notice that for all $\phi \in \bar H$
$$\int_S L(\phi) dv_g=\int_S N(\phi)dv_g=\int_S R dv_g=0.$$

\noindent In order to get the invertibility of $L$, let us
introduce the weighted norm for any $h\in L^\infty(S)$
\begin{equation*}
\| h \|_*=\sup_{x\in S}
\lf[\sum_{j=1}^{m} \frac{\de_j^\sigma}{(\de_j^2 + \chi_{B_{r_0}(\xi_j)}(x)
|y_{\xi_j}(x)|^2+r_0^2 \chi_{S\setminus
B_{r_0}(\xi_j)}(x))^{1+\sigma/2}}\rg]^{-1} |h(x)|,
\end{equation*}
where $0<\sigma<1$ is a small fixed
constant and $\chi_A$ denotes the characteristic function of the
set $A$. Let us evaluate the approximation rate of $W$ in
$\|\cdot\|_*$ and recall that $m=m_1+ m_2 $:

\begin{lem}\label{estrr0}
Assume \eqref{repla0}-\equ{repla1}. There exists a constant $C>0$,
independent of $\de>0$ small, \st
\begin{equation}\label{re}
\|R\|_*\le  C\left(\delta \,|\nabla\varphi_m^*(\xi)|_g + \de^{2-\sigma}|\log \de|  \right)
\end{equation}
for all $\xi \in \Xi$, where $|\nabla \varphi_m^*(\xi)|_g^2$ stands
for $\displaystyle \sum_{j=1}^m |\nabla_{\xi_j}
\varphi_m^*(\xi)|_g^2$.
\end{lem}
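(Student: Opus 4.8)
**Proof plan for Lemma \ref{estrr0} (estimate of $\|R\|_*$).**

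The plan is to compute $R=\Delta_g W+\lambda_1\big(\tfrac{V_1e^W}{\int_S V_1e^W}-\tfrac1{|S|}\big)-\lambda_2\tau\big(\tfrac{V_2e^{-\tau W}}{\int_S V_2e^{-\tau W}}-\tfrac1{|S|}\big)$ term by term, region by region, using the expansion of $PU_{\delta,\xi}$ from Lemma \ref{ewfxi}. First I would write $\Delta_g W=\sum_{j=1}^{m_1}\Delta_g W_j-\tfrac1\tau\sum_{j=m_1+1}^m\Delta_g W_j$ and invoke \eqref{ePu} to replace each $\Delta_g W_j$ by $-\chi_{\xi_j}e^{-\varphi_{\xi_j}}e^{U_j}+\tfrac1{|S|}\int_S\chi_{\xi_j}e^{-\varphi_{\xi_j}}e^{U_j}\,dv_g$. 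The zero-average constant terms collect into $O(\delta^2)$ contributions that are harmless in $\|\cdot\|_*$. The heart of the matter is to show that on each ball $B_{r_0}(\xi_j)$ the leading bubble term $\tfrac{\lambda_k}{|S|}\cdots$ — more precisely $\lambda_1\tfrac{V_1e^W}{\int_S V_1e^W}$ near a positive point and $\lambda_2\tau\tfrac{V_2e^{-\tau W}}{\int_S V_2e^{-\tau W}}$ near a negative point — matches $\chi_{\xi_j}e^{-\varphi_{\xi_j}}e^{U_j}$ up to the claimed error. For this one uses Lemma \ref{ewfxi} to write, for $x\in B_{r_0}(\xi_j)$ with $j\in\mathcal J_1$,
$$W(x)=U_j(x)-\log(8\delta_j^2)+8\pi H(\xi_j,\xi_j)+8\pi\!\!\sum_{i\ne j,\,i\le m_1}\!\!G(x,\xi_i)-\tfrac{8\pi}\tau\!\!\sum_{i>m_1}\!\!G(x,\xi_i)+(\text{const})+O(\delta^2|\log\delta|),$$
and similarly for $j\in\mathcal J_2$ with the opposite sign convention; the exponential of the non-singular part is exactly $\rho_j(x)/V_1(x)$ (resp. $\rho_j(x)/V_2(x)$) by the definitions \eqref{ro1}--\eqref{ro2}, so that $V_1(x)e^{W(x)}\approx \tfrac{1}{8\delta_j^2}\rho_j(x)e^{U_j(x)}$, which together with the calibration \eqref{repla0} $\delta_j^2=\mu_1^2\delta^2\rho_j(\xi_j)$ produces the bubble profile $e^{-\varphi_{\xi_j}}e^{U_j}$ with the correct normalization. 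The discrepancy between $\rho_j(x)$ and $\rho_j(\xi_j)$, between $e^{\varphi_{\xi_j}(x)}$ and $1$, and between $V_1(x)$ and $\rho_j(\xi_j)$ after Taylor expansion around $\xi_j$, is what contributes the $\delta|\nabla\varphi_m^*(\xi)|_g$ term: the first-order term in this expansion vanishes precisely when $\xi$ is a critical point of $\varphi_m^*$ (since $\nabla_{\xi_j}\varphi_m^*$ controls $\nabla\log\rho_j(\xi_j)$), and in general is bounded by $C\delta|\nabla\varphi_m^*(\xi)|_g$ after accounting for the weight $\delta_j^\sigma/(\delta_j^2+|y|^2)^{1+\sigma/2}$ in $\|\cdot\|_*$; the second-order Taylor remainder and the $O(\delta^2|\log\delta|)$ error from Lemma \ref{ewfxi} give the $\delta^{2-\sigma}|\log\delta|$ term.

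Next I would handle the denominators: expanding $\int_S V_1e^W\,dv_g=\sum_{j\le m_1}\int_{B_{r_0}(\xi_j)}V_1e^W\,dv_g+O(1)$, the dominant contribution from each ball is $\int_{B_{r_0}(\xi_j)}\tfrac{1}{8\delta_j^2}\rho_j e^{U_j}\,dv_g$, and after the change of variables $y=y_{\xi_j}(x)/\delta_j$ this equals $\rho_j(\xi_j)\int_{\mathbb R^2}e^{u_0(|y|)}\,dy+o(1)=8\pi\rho_j(\xi_j)+o(1)$. Hence $\lambda_1/\int_S V_1e^W\to (8\pi m_1)/(8\pi\sum_{j\le m_1}\rho_j(\xi_j))$, and multiplying back restores exactly the weight needed so that the leading singular terms cancel. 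The terms $\tfrac{\lambda_1}{|S|}$ and $\tfrac{\lambda_2\tau}{|S|}$ together with the constant parts of $\Delta_g W$ combine to something of size $O(\delta^2)$; the region $S\setminus\cup_j B_{r_0}(\xi_j)$ is easy because there $e^W$ is uniformly bounded and the weight $\|\cdot\|_*$ is of order $r_0^{-(2+\sigma)}\delta^\sigma$, so everything there is $O(\delta^\sigma)$, in fact $O(\delta^{2-\sigma})$ once one uses $|\lambda_i\tau^{2(i-1)}-8\pi m_i|\le C_0\delta^2|\log\delta|$ from \eqref{repla1} to absorb the $O(1)$ mismatch into $R$'s average being zero. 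Throughout, the bound $0<\mu_i\le C_0$ and $\xi\in\Xi$ (separation of the $\xi_j$ and $V_i(\xi_j)\ge r_0$) ensure all constants are uniform.

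The main obstacle is the careful bookkeeping on each ball $B_{r_0}(\xi_j)$ showing that the mismatch between $V_k(x)e^{(-\tau)^{k-1}W(x)}$ and its bubble approximation, when paired against the $\|\cdot\|_*$-weight, is controlled by $\delta|\nabla\varphi_m^*(\xi)|_g+\delta^{2-\sigma}|\log\delta|$ and not merely by $\delta$; this requires tracking that the gradient of the regular part of $W$ evaluated at the concentration point is exactly (a multiple of) $\nabla_{\xi_j}\varphi_m^*(\xi)$ up to lower-order terms — this is the algebraic identity linking \eqref{fim} with \eqref{ro1}--\eqref{ro2} via $\nabla_{\xi_j}\varphi_m^*(\xi)=\tfrac{1}{4\pi\rho_j(\xi_j)}\nabla\rho_j(\xi_j)\cdot(\text{sign/}\tau\text{ factor})+(\text{interaction remainders})$ — and then exploiting the scaling $|y|\sim\delta$ on the support of the weight to gain the extra power of $\delta$. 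The $O(\delta^4|\log\delta|)$ tail in Lemma \ref{ewfxi} and the $O(\delta^2)$ from the $f_\xi$/$F_\xi$ correction are comparatively routine, contributing only to the $\delta^{2-\sigma}|\log\delta|$ term once divided by the $\|\cdot\|_*$-normalization. Since this is essentially the computation carried out in \cite{EF} for $\lambda_2=0$, adapted to include the second nonlinearity with its $\tau$-rescaling and the $-\tfrac1\tau$ weight in $W$, I would organize the proof to parallel that reference, emphasizing the places where the asymmetry ($\tau\ne1$, two distinct potentials $V_1,V_2$) enters.
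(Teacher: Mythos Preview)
Your plan is essentially the paper's own argument: expand $W$ on each ball via Lemma~\ref{ewfxi}, compute the normalizing integrals $\int_S V_ke^{(-\tau)^{k-1}W}dv_g$, Taylor-expand $\rho_j$ at $\xi_j$ so that the first-order remainder produces the $\delta|\nabla\varphi_m^*(\xi)|_g$ term (via $\nabla\log(\rho_j\circ y_{\xi_j}^{-1})(0)$ being a constant multiple of $\nabla_{\xi_j}\varphi_m^*(\xi)$), and absorb the constant mismatches using \eqref{repla1}. One computational slip to fix when you execute it: your denominator estimate is off by the $\delta_j^{-2}$ scaling --- the integral $\int_{B_{r_0}(\xi_j)}\tfrac1{8\delta_j^2}\rho_je^{U_j}\,dv_g$ diverges like $\pi\rho_j(\xi_j)/\delta_j^2=\pi/(\mu_1^2\delta^2)$, not $8\pi\rho_j(\xi_j)$; the paper's \eqref{iV1eW} gives $\int_S V_1e^W\,dv_g=\tfrac{\pi m_1}{\mu_1^2\delta^2}+O(|\log\delta|)$, and it is precisely this $\delta^{-2}$ in the denominator that, together with the calibration \eqref{repla0}, makes $\tfrac{\lambda_1 V_1e^W}{\int_S V_1e^W}$ coincide with $\tfrac{\lambda_1}{8\pi m_1}e^{U_j}$ at leading order on $B_{r_0}(\xi_j)$.
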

\begin{proof}[\dem] We shall argue in the same way as in \cite[Lemma 2.1]{EF}. First, from  Lemma \ref{ewfxi} we note that for any $j\in\{1,\dots,m\}$, $W_j(x)=U_j(x) - \log (8\delta_j^2) + 8\pi H(x,\xi_j)+O(\de^2 |\log \de|)$ uniformly for $x\in B_{r_0}(\xi_j)$ and $W_j(x)=8\pi G(x,\xi_j) +O(\de^2 |\log \de|)$ uniformly for $x$ on compact subsets of $S \sm\{\xi_j\}$. Since by symmetry and $\hat \varphi_{\xi_j}(0)=0$ we have that 
$$\int_{B_{r_0}(\xi_j)} \rho_j(x) e^{U_j} dv_g=8 \pi \rho_j(\xi_j) + O(\de^2|\log\de|),$$
we then get that for $j\in\{1,\dots, m_1\}$
\begin{eqnarray}\label{iV1eW1}
\int_{B_{r_0}(\xi_j)} V_1e^W dv_g &=& \frac{1}{8\delta_j^2}\int_{B_{r_0}(\xi_j)} V_1 e^{U_j + 8\pi H(x,\xi_j)+8\pi \sum\limits_{l=1,l\ne j}^{m_1} G(x,\xi_l)-{8\pi \over\tau} \sum\limits_{l=m_1+1}^m G(x,\xi_l)+O(\de^2|\log \de |)}dv_g\nonumber\\
&=& {1\over \delta_j^2}[\pi \rho_j(\xi_j) + O(\de^2|\log\de|)] = {\pi \over \mu_1^2\de^2} + O(|\log\de|)
\end{eqnarray}
and for $j\in\{m_1+1,\dots,m\}$
\begin{eqnarray}\label{iV1eW2}
\int_{B_{r_0}(\xi_j)}\hspace{-0.1cm} V_1e^W dv_g&\hspace{-0.3cm}=& \hspace{-0.3cm} \int_{B_{r_0}(\xi_j)} V_1 e^{-{1\over\tau} [U_j -\log(8\de_j^2)+ 8\pi H(x,\xi_j)]+8\pi \sum\limits_{l=1}^{m_1} G(x,\xi_l)-{8\pi \over\tau} \sum\limits_{l=m_1+1,l\ne j}^m G(x,\xi_l)+O(\de^2|\log \de |)}dv_g \nonumber \\
&\hspace{-0.3cm}=&\hspace{-0.2cm}  \int_{B_{r_0}(\xi_j)} V_1(x)\Big[{\rho_j(x)\over V_2(x)} \Big]^{-1/\tau} (\de_j^2+|y_{\xi_j}(x)|^2)^{2/\tau} (1 + O(\de^2|\log \de |))dv_g  \nonumber \\
&\hspace{-0.3cm}=&\hspace{-0.2cm}O(1).
\end{eqnarray}
So, by using \eqref{iV1eW1}-\eqref{iV1eW2} we have that
\begin{eqnarray}\label{iV1eW}
\int_S V_1e^W dv_g =\sum_{j=1}^{m_1} \int_{B_{r_0}(\xi_j)} V_1 e^{W}dv_g + O(1) ={\pi m_1\over \mu_1^2 \de^2} + O(|\log\de|).
\end{eqnarray}
Similarly, for $j\in\{1,\dots, m_1\}$ we get that
\begin{eqnarray}\label{iV2etW2}
\hspace{-0.2cm}\int_{B_{r_0}(\xi_j)} V_2e^{-\tau W} dv_g 
&=& O(1)
\end{eqnarray}
and for $j\in\{m_1+1,\dots,m\}$
\begin{eqnarray}\label{iV2etW1}
\int_{B_{r_0}(\xi_j)} V_2e^{-\tau W} dv_g 
&=& {1\over \delta_j^2}[\pi \rho_j(\xi_j) + O(\de^2|\log\de|)] = {\pi \over \mu_2^2 \de^2} + O(|\log\de|).
\end{eqnarray}
So, by using \eqref{iV2etW2}-\eqref{iV2etW1} we have that
\begin{eqnarray}\label{iV2etW}
\int_S V_2e^{-\tau W} dv_g &=&\sum_{j=m_1+1}^m \int_{B_{r_0}(\xi_j)} V_2 e^{W}dv_g + O(1)=  {\pi m_2\over \mu_2^2 \de^2} + O(|\log\de|).
\end{eqnarray}
By Lemma \ref{ewfxi} and \equ{repla0}, \eqref{iV1eW}, \eqref{iV2etW} we have that
\begin{itemize}
\item in $S \setminus \cup_{j=1}^m B_{r_0}(\xi_j)$ there holds $\la_1 \frac{ V_1 e^W}{\int_S V_1e^W dv_g}=O(\de^2)$  in view of $W(x)=O(1)$;
\item in $B_{r_0}(\xi_j)$, $j\in\{1,\dots,m_1\}$, there holds
\begin{eqnarray*}
\frac{ V_1 e^W}{\int_S V_1e^W dv_g}&=&  \frac{V_1
e^{-\log(8\delta_j^2)+8\pi H(x,\xi_j) + 8\pi\sum\limits_{l=1,l\ne
j}^{m_1}G(x,\xi_l)-{8\pi\over\tau}\sum\limits_{l=m_1+1}^m G(x,\xi_l)+O(\de^2|\log \de|)}}
{\pi m_1 \mu_1^{-2}\de^{-2} + O(|\log\de|)} e^{U_j}\\
&=& \frac{1}{8\pi m_1}\bigg[1+\Big\langle\frac{\nabla (\rho_j \circ
y_{\xi_j}^{-1})(0)}{ \rho_j (\xi_j)},y_{\xi_j}(x)\Big\rangle+O(|y_{\xi_j}(x)|^2+\de^2
|\log \de|)\bigg]  e^{U_j};
\end{eqnarray*}
\item in $B_{r_0}(\xi_j)$, $j\in\{m_1+1,\dots,m\}$, there holds
$$
\frac{ V_1 e^W}{\int_S V_1e^W dv_g} 
= \frac{V_1(x) [\rho_j(x)/V_2(x)]^{-1/\tau}+O(\de^2|\log \de|)} { \pi m_1 \mu_1^{-2} \de^{-2} + O( |\log\de|)} (\de_j^2+|y_{\xi_j}(x)|^2)^{2/\tau}= O(\de^2).$$
\end{itemize}
Similarly as above, we have that
\begin{itemize}
\item in $S \setminus \cup_{j=1}^m B_{r_0}(\xi_j)$ there holds $\la_2\tau \frac{ V_2 e^{-\tau W} }{\int_S V_2 e^{-\tau W} dv_g}=O(\de^2)$  in view of $W(x)=O(1)$;
\item in $B_{r_0}(\xi_j)$, $j\in\{1,\dots,m_1\}$, there holds
$$\frac{ V_2 e^{-\tau W} }{\int_S V_2e^{-\tau W} dv_g}
=\frac{V_2(x) [\rho_j(x) / V_1(x) ]^{-\tau}+O(\de^2|\log \de|)}{ \pi m_2 \mu_2^{-2} \de^{-2} + O( |\log\de|)} 
(\de_j^2+|y_{\xi_j}(x)|^2)^{2\tau} = O(\de^2),$$
\item in $B_{r_0}(\xi_j)$, $j\in\{m_1+1,\dots,m\}$, there holds
$$\frac{ V_2 e^{-\tau W} }{\int_S V_2e^{-\tau W} dv_g}=\frac{1}{8\pi m_2 }\bigg[1+\Big\langle\frac{\nabla ( \rho_j \circ
y_{\xi_j}^{-1})(0)}{ \rho_j(\xi_j)},y_{\xi_j}(x)\Big\rangle+O(|y_{\xi_j}(x)|^2+\de^2
|\log \de|)\bigg]  e^{U_j}.$$
\end{itemize}
Since as before
$$\int_S \chi_j e^{-\varphi_j} e^{U_j} dv_g=\int_{B_{r_0}(0)} {8 \delta_j^2\over (\delta_j^2 +
|y|^2 )^2}  dy+O(\delta^2)=8\pi + O(\de^2)$$
with $\varphi_j=\varphi_{\xi_j}$, for $R$ given by \eqref{R} we then have that
\begin{eqnarray*}
R&=&-\sum_{j=1}^{m_1} \chi_j e^{-\varphi_j}e^{U_j} +  {\la_1V_1e^{W}\over\int_S V_1e^{W}dv_g}+{8\pi m_1-\la_1\over |S|} + O(\de^2)\\
&&+{1\over\tau} \sum_{j=m_1+1}^{m} \chi_j e^{-\varphi_j}e^{U_j} -  {\la_2\tau V_2e^{-\tau W}\over\int_S V_2e^{-\tau W}dv_g}+{\la_2\tau^2- 8\pi m_2\over |S|\tau} + O(\de^2),
\end{eqnarray*}
where $\chi_j=\chi_{\xi_j}$. By previous computations we now deduce that $R(x)=O(\de^2)$ in $S \setminus \cup_{j=1}^m B_{r_0}  (\xi_j)$,
\begin{eqnarray*}
R&=&  \lf[-e^{-\varphi_j}+{\la_1\over 8\pi m_1}+O\big( |\nabla\log ( \rho_j \circ y_{\xi_j}^{-1})(0)||y_{\xi_j}(x)|+|y_{\xi_j}(x)|^2+\de^2 |\log \de|\big)\rg] e^{U_j}\\
&&+O(|\la_1-8\pi m_1|+|\la_2\tau^2- 8\pi m_2 |+\de^2)\\
&=& e^{U_j}O\lf(|\nabla \log( \rho_j \circ y_{\xi_j}^{-1})(0)|
|y_{\xi_j}(x)|+|y_{\xi_j}(x)|^2+|\la_1-8\pi m_1|+\de^2|\log \de|\rg)\\
&& +O(|\la_1-8\pi m_1| +|\la_2\tau^2- 8\pi m_2|+ \de^2)
\end{eqnarray*}
in $B_{r_0}  (\xi_j)$, $j\in\{1,\dots,m_1\}$ and similarly,
\begin{eqnarray*}
R&=& e^{U_j}O\lf(|\nabla \log( \rho_j \circ y_{\xi_j}^{-1})(0)|
|y_{\xi_j}(x)|+|y_{\xi_j}(x)|^2+|\la_2\tau^2- 8\pi m_2|+\de^2|\log \de|\rg) \\
&&+O(|\la_1-8\pi m_1| +|\la_2\tau^2- 8\pi m_2| + \de^2)
\end{eqnarray*}
in $B_{r_0}  (\xi_j)$, $j\in\{m_1+1,\dots,m\}$, in view of $\varphi_j(\xi_j)=0$ and $\nabla \varphi_j(\xi_j)=0$. From the definition of $\|\cdot \|_*$ and \eqref{repla1} we deduce the validity of \eqref{re}. This finishes the proof.
\end{proof}


\section{Variational reduction and proof of main results}\label{variat}
\noindent The solvability theory for the linear operator $L$
given in (\ref{ol}), obtained as the linearization of
\eqref{mfewt} at the approximating solution $W$, is a key step in the so-called nonlinear Lyapunov-Schimdt reduction. Notice that formally the operator $L$ approaches $\hat L$ defined in $\mathbb{R}^2$ as
$$\hat L(\phi) = \Delta\phi+{8\over (1+|y|^2)^2}\lf(\phi -
{1\over\pi}\int_{\R^2}{\phi(z)\over (1+|z|^2)^2}\,dz\rg),$$
by setting $y =y_{\xi_j}(x)/\delta_j $ as $\de \to 0$. Due to the intrinsic invariances, the kernel of $\hat L$ in $L^\infty(\R^2)$  is non-empty and is spanned by $1$ and $Y_j$, $j=0,1,2$, where $\ds Y_{i}(y) = { 4 y_i \over 1+|y|^2}$, $i=1,2$, and $\ds Y_{0}(y) = 2\,{1-|y|^2\over 1+|y|^2}$. 
Since \cite{DeKM,EF,EGP} it is by now rather standard to show the invertibility of $L$ in a suitable ``orthogonal" space, and a sketched proof of it will be given in Appendix A. However, as it was observed in \cite{EF}, for Dirichlet Liouville-type equations on bounded domains as in \cite{DeKM,EGP}, the corresponding limiting operator $\tilde L$ takes the form $\tilde L(\phi)=\Delta\phi+{8\over (1+|y|^2)^2}\phi$ and the function $1$ does not belong to its kernel, making possible to disregard the ``dilation parameters" $\de_i$ in the reduction. As we will see, two additional parameters $\mu_1$ and $\mu_2$ are needed in the reduction (one associated to all ``positive bubbles'' and the other one to all ``negative bubbles'') and in this respect our problem displays a new feature w.r.t. Dirichlet Liouville-type equations, making our situation very similar to the one arising in the study of critical problems in higher dimension. Roughly speaking, $L$ resemble a ``direct sum'' of linear operators for mean field type equations.

\medskip \noindent To be more precise, for $i=0,1,2$ and $j=1,\dots,m$ introduce the functions
\begin{equation*}
Z_{ij}(x) = Y_i\lf({y_{\xi_j}(x)\over \delta_j}\rg)=\left\{\begin{array}{ll}
\ds 2  {\de_j^2- |y_{\xi_j}(x)|^2\over \de_j^2+|y_{\xi_j}(x)|^2} &\hbox{for }i=0\\[0.5cm]
\ds {4\de_j(y_{\xi_j}(x))_i \over \de_j^2+|y_{\xi_j}(x)|^2}&\hbox{for }i=1,2, \end{array} \right.
\end{equation*}
and set $Z_1=\displaystyle \sum_{l=1}^{m_1} Z_{0l}$ and $Z_2=\displaystyle \sum_{l=m_1+1}^m Z_{0l}$. For $i=1,2$ and $j=1,\dots,m$, let $PZ_i$, $PZ_{ij}$ be the projections of
$Z_i$, $Z_{ij}$ as the solutions in $\bar H$ of
\begin{equation} \label{ePZ}
\begin{array}{rl}
 \Delta_g PZ_i&\ds=\chi_j \Delta_g Z_i-\frac{1}{|S|}\int_S
\chi_j \Delta_g Z_i dv_g\\[0.5cm]
\Delta_g PZ_{ij} &\ds=\chi_j \Delta_g Z_{ij}-\frac{1}{|S|}\int_S
\chi_j  \Delta_g Z_{ij} dv_g.
\end{array}
\end{equation}
In Appendix A we prove the following result:
\begin{prop} \label{p2}
There exists $\delta_0>0$ so that for all $0<\delta\leq \delta_0$,
$h\in C(S)$ with $\int_Sh\,dv_g=0$, $\mu\in\ml{M}$, $\xi \in \Xi$ there is a
unique solution $\phi \in \bar H \cap W^{2,2}(S)$ and
$c_{0i},c_{ij} \in \mathbb{R}$ of
\begin{equation}\label{plco}
\left\{ \begin{array}{ll}
L(\phi) = h + \displaystyle \sum_{i=1}^{2}\Big[ c_{0i} \Delta_gPZ_i + \sum_{j=1}^m c_{ij} \Delta_g PZ_{ij}\Big]&\text{in }S\\
\ds\int_S \phi \Delta_g PZ_i dv_g=\int_S \phi \Delta_g PZ_{ij} dv_g=0 &\forall\: i=1,2,\, j=1,\dots,m.
\end{array} \right.
\end{equation}
Moreover, the map $(\mu ,\xi) \mapsto (\phi,c_{0i},c_{ij})$ is
twice-differentiable in $\mu$ and one-differentiable in $\xi$ with
\begin{eqnarray}
&&\|\phi \|_\infty \le C |\log \de| \|h\|_*\:,\qquad \displaystyle \sum_{i=1}^{2}\Big[|c_{0i}|+ \sum_{j=1}^m  |c_{ij}| \Big] \le C\|h\|_* \label{estmfe1} \\
&& \sum_{i=1}^2 \bigg[\|\fr_{\mu_i} \phi\|_\infty + \sum_{k=1}^2{1\over |\log \de|} \|\fr_{\mu_i\mu_k} \phi \|_\infty +  \sum_{j=1}^m \de \|\fr_{(\xi_j)_i} \phi\|_\infty  \bigg] \le C |\log \de|^2 \|h\|_*\label{estd}
\end{eqnarray}
for some $C>0$.
\end{prop}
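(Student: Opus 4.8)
The plan is to establish Proposition~\ref{p2} by the standard two-step Lyapunov--Schmidt scheme: first solve an \emph{a priori estimate} for the linear problem \eqref{plco}, and then deduce solvability by Fredholm theory together with a continuity argument in the parameters. I would work in the weighted norm $\|\cdot\|_*$ already introduced, and exploit the decomposition of $L$ as a ``direct sum'' of $m$ model operators, each of which is $L^\infty$-close, after the rescaling $y=y_{\xi_j}(x)/\delta_j$, to the limiting operator $\hat L$ on $\R^2$ whose bounded kernel is spanned by $1,Y_0,Y_1,Y_2$. The orthogonality conditions $\int_S\phi\,\Delta_g PZ_i\,dv_g=\int_S\phi\,\Delta_g PZ_{ij}\,dv_g=0$ are designed precisely to kill these kernel directions (the index $i=0$ pieces being grouped into the two parameters $Z_1,Z_2$ because the constant function $1$ lies in $\ker\hat L$ and must be handled globally, one constant per sign).

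First I would prove the key \emph{a priori bound} $\|\phi\|_\infty\le C|\log\delta|\,\|h\|_*$ for solutions of \eqref{plco} by contradiction and blow-up: suppose there are $\delta_n\to0$, $(\mu_n,\xi_n)$, $h_n$ with $\|h_n\|_*\to 0$ but $\|\phi_n\|_\infty=1$. One tests \eqref{plco} against the various $PZ$'s to control the Lagrange multipliers $c_{0i}^n,c_{ij}^n$ in terms of $\|h_n\|_*$ and $\|\phi_n\|_\infty$, using the nondegeneracy/almost-orthogonality of the family $\{PZ_i,PZ_{ij}\}$ (the relevant Gram matrix is, up to $o(1)$, block-diagonal and invertible). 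Then, rescaling around each $\xi_j$, the limit of $\phi_n$ satisfies $\hat L(\phi_\infty)=0$ and is bounded, hence $\phi_\infty=\sum_{i=0}^2 a_iY_i+\text{const}$; the passage to the limit in the orthogonality conditions forces all coefficients to vanish, so $\phi_n\to0$ uniformly on compact sets of each rescaled plane. Away from the concentration points the equation becomes (to leading order) $\Delta_g\phi_n=O(\|h_n\|_*)+o(1)$ with zero average, so elliptic estimates push $\phi_n\to 0$ there too; a maximum-principle/barrier argument in the intermediate annuli $A_{r}(\xi_j)$, built from the weight defining $\|\cdot\|_*$, bridges the two regions and yields $\|\phi_n\|_\infty\to0$, contradicting $\|\phi_n\|_\infty=1$. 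The factor $|\log\delta|$ is the usual loss coming from the logarithmic behaviour of the model Green's function / the constant in the projected problem, exactly as in \cite{EF}. The bound on the multipliers in \eqref{estmfe1} then follows from the testing identities.

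With the a priori estimate in hand, existence and uniqueness of $(\phi,c_{0i},c_{ij})$ for given $h$ is obtained by a standard functional-analytic argument: \eqref{plco} is equivalent, via the orthogonal projection onto the subspace of $\bar H\cap W^{2,2}(S)$ satisfying the orthogonality constraints, to an equation $(I+K)\phi=\tilde h$ with $K$ compact (the zeroth-order term of $L$ is a compact perturbation of $\Delta_g$ on that space), and the a priori bound shows $I+K$ is injective, hence bijective by Fredholm's alternative; $W^{2,2}$-regularity is automatic since $h\in C(S)$ and the right-hand side of \eqref{plco} is bounded. Finally, the differentiability of $(\mu,\xi)\mapsto(\phi,c_{0i},c_{ij})$ and the estimates \eqref{estd} are obtained by differentiating \eqref{plco} formally in $\mu_i$ and $(\xi_j)_i$, observing that the differentiated right-hand side again has the form ``$h'$ plus $\Delta_gPZ$-terms'' with $\|h'\|_*\le C|\log\delta|\,\|h\|_*$ (each $\mu$- or $\xi$-derivative hitting the bubbles costs at most a $|\log\delta|$, and a $\xi$-derivative additionally gains a factor $\delta^{-1}$ offset by the prefactor $\delta$ displayed in \eqref{estd}), and then reapplying the linear theory together with the Implicit Function Theorem to justify that the formal derivatives are the genuine ones. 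The main obstacle is the blow-up analysis at the heart of the a priori estimate: one has to carefully track the several concentration scales $\delta_j$ and the two groups of bubbles, control the off-diagonal interactions between distinct $\xi_j$'s (which are harmless since $d_g(\xi_i,\xi_j)\ge 4r_0$), and in particular handle the global constant modes via the parameters $\mu_1,\mu_2$ — a feature absent from the Dirichlet case and responsible for the ``critical-problem-in-higher-dimension'' flavour noted in the text. Since all of these steps run parallel to \cite[Appendix]{EF} (with the bookkeeping doubled to accommodate $e^{u}$ and $e^{-\tau u}$ simultaneously), I would only indicate the modifications rather than repeat the full argument.
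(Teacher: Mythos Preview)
Your approach is the same as the paper's, and your outline of the Fredholm solvability and of the derivative estimates is accurate. However, there is one genuine gap in the blow-up step of the a-priori estimate: the orthogonality conditions do \emph{not} by themselves force all limiting kernel coefficients to vanish. After rescaling at $\xi_j$ you get a limit $\Psi_{j,\infty}=\sum_{i=0}^2 a_{ij}Y_i$; the conditions $\int_S\phi\,\Delta_g PZ_{ij}\,dv_g=0$ ($i=1,2$) do kill $a_{1j}=a_{2j}=0$, but the conditions $\int_S\phi\,\Delta_g PZ_k\,dv_g=0$ ($k=1,2$) only yield $\sum_{j\in\ml J_k}a_{0j}=0$ --- a sum, not each term --- precisely because $PZ_k=\sum_{l\in\ml J_k}PZ_{0l}$ is a grouped object. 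To conclude that each $a_{0j}=0$ individually one must test the equation against the auxiliary functions $P\hat Z_j$ built from the logarithmic correctors
\[
\beta_j(y)=\tfrac{4}{3}\bigl[2\log\delta_j+\log(1+|y|^2)\bigr]\,\tfrac{1-|y|^2}{1+|y|^2}+\tfrac{8}{3}\,\tfrac{1}{1+|y|^2},
\]
which satisfy $e^{\varphi_j}\Delta_g\hat Z_j+e^{U_j}\hat Z_j=e^{U_j}Z_{0j}$ (as in \cite{EGP}); this is the step the paper carries out explicitly, and it cannot be replaced by the stated orthogonality. Likewise, the constant mode (your ``$+\text{const}$'') is not removed by orthogonality but by a Green's-representation computation: writing $\psi_i=\phi+\tilde c_i(\phi)$ with $\tilde c_i(\phi)=-\int V_ie^{(-\tau)^{i-1}W}\phi/\int V_ie^{(-\tau)^{i-1}W}$, one shows $\tilde c_{i,0}=\lim\tilde c_i(\phi)=0$ via $\psi_i(\xi_j)=\tfrac{1}{|S|}\int\psi_i+\int G(\cdot,\xi_j)[\mathcal K_1\psi_1+\mathcal K_2\psi_2-h]$, and this calculation uses precisely the identity $\sum_{j\in\ml J_k}a_{0j}=0$ obtained above. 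These two points --- the $P\hat Z_j$ test and the Green's-function argument for the constant --- are the heart of the a-priori estimate both here and in \cite{EF}; once you include them your sketch is complete and matches the paper line by line.
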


\medskip \noindent Let us recall that  $u=W+\phi$  solves \eqref{mfewt} if $\phi\in \bar H$ does satisfy (\ref{ephi}). Since the operator $L$ is not fully invertible, in view of Proposition \ref{p2} one can solve the nonlinear problem (\ref{ephi}) just up to a linear combination of $\Delta_g PZ_1$, $\Delta_g PZ_2$ and $\Delta_g PZ_{ij}$, as explained in the following (see Appendix B the proof):
\begin{prop}\label{lpnlabis}
There exists $\delta_0>0$ so that for
all $0<\delta\leq \delta_0$, $\mu\in\ml{M}$, $\xi \in \Xi$ problem
\begin{equation}\label{pnlabis}
\left\{ \begin{array}{ll}
L(\phi)= -[R+N(\phi)] +  \displaystyle \sum_{i=1}^{2}\Big[ c_{0i} \Delta_gPZ_i + \sum_{j=1}^m c_{ij} \Delta_g PZ_{ij}\Big] & \text{in } S\\
\ds \int_S \phi \Delta_g PZ_i dv_g=\int_S \phi \Delta_g PZ_{ij} dv_g= 0 &\forall\:
i=1,2,\, j=1,\dots,m
\end{array} \right.
\end{equation}
admits a unique solution $\phi(\mu ,\xi) \in \bar H \cap
W^{2,2}(S)$ and $c_{0i} (\mu,\xi),\,c_{ij}(\mu ,\xi) \in \R$, $i=1,2$
and $j=1,\dots,m$, where $\de_j>0$ are as in \eqref{repla0} and
$N$, $R$ are given by \eqref{nlt}, \eqref{R}, respectively. Moreover, the map
$(\mu,\xi)\mapsto
(\phi(\mu,\xi),c_{0i}(\mu,\xi),c_{ij}(\mu,\xi))$ is
twice-differentiable in $\mu$ and one-differentiable in $\xi$ with
\begin{eqnarray} \label{cotaphi1bis}
 \|\phi\|_\infty &\hspace{-0.2cm} \le & \hspace{-0.2cm} C\left( \delta |\log \delta |\,  |\nabla \varphi_m(\xi)|_g+ \delta^{2-\sigma}|\log\delta|^2\right)\\
\label{cotadphi1bis}
\hspace{-0.6cm}\sum_{i=1}^2\Big[ \|\fr_{\mu_i} \phi \|_\infty +  \sum_{j=1}^m \de \|\fr_{(\xi_j)_i} \phi \|_\infty +\sum_{k=1}^2{\|\fr_{\mu_i\mu_k} \phi \|_\infty\over |\log \de|} \Big] &\hspace{-0.2cm} \le &\hspace{-0.2cm}C \left(\de |\log \delta |^2 |\nabla \varphi_m(\xi)|_g+\delta^{2-\sigma}|\log\delta|^3\right) 
\end{eqnarray}
\end{prop}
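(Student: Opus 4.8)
The plan is to recast the projected nonlinear problem \eqref{pnlabis} as a fixed point problem and solve it by a contraction argument built on the linear theory of Proposition \ref{p2}. Let $T_{\mu,\xi}$ denote the linear solution operator there: for $h\in C(S)$ with $\int_S h\,dv_g=0$, the function $\phi=T_{\mu,\xi}(h)\in\bar H\cap W^{2,2}(S)$ is the unique solution of \eqref{plco}, it satisfies $\|T_{\mu,\xi}(h)\|_\infty\le C|\log\de|\,\|h\|_*$ by \eqref{estmfe1}, and the constants $c_{0i},c_{ij}$ attached to it are linear functionals of $h$ bounded by $C\|h\|_*$. Since $N$ in \eqref{nlt} and $R$ in \eqref{R} both have zero average over $S$, problem \eqref{pnlabis} is equivalent to the fixed point equation
\[
\phi=\mathcal{A}_{\mu,\xi}(\phi):=-\,T_{\mu,\xi}\big(R+N(\phi)\big),
\]
and it suffices to show that $\mathcal{A}_{\mu,\xi}$ is a contraction of the ball
\[
\mathcal{B}=\Big\{\phi\in\bar H\cap W^{2,2}(S):\ \|\phi\|_\infty\le C_*\big(\de|\log\de|\,|\nabla\varphi_m^*(\xi)|_g+\de^{2-\sigma}|\log\de|^2\big)\Big\}
\]
into itself, for a suitably large constant $C_*$ and all small $\de$.

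Two analytic ingredients are needed. First, the error estimate $\|R\|_*\le C(\de\,|\nabla\varphi_m^*(\xi)|_g+\de^{2-\sigma}|\log\de|)$ from Lemma \ref{estrr0}. Second, a quadratic estimate for the nonlinear term, namely $\|N(\phi)\|_*\le C\|\phi\|_\infty^2$ together with the Lipschitz bound $\|N(\phi_1)-N(\phi_2)\|_*\le C(\|\phi_1\|_\infty+\|\phi_2\|_\infty)\,\|\phi_1-\phi_2\|_\infty$ whenever $\|\phi_i\|_\infty$ is small. The latter is obtained by expanding each $N_i$ in \eqref{ni}, which collects the quadratic-and-higher order terms in $\phi$ of $t\mapsto \la_i\tau^{i-1}V_ie^{(-\tau)^{i-1}(W+t\phi)}\big/\int_S V_ie^{(-\tau)^{i-1}(W+t\phi)}dv_g$ around $t=0$, and estimating the resulting exponential factors and ratio-of-integral terms against the weight defining $\|\cdot\|_*$, using that $V_1,V_2$ are bounded away from $0$ on $\Xi$, the explicit profile of $e^{U_j}$, and the estimates established in the proof of Lemma \ref{estrr0}. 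Combining these with $\|T_{\mu,\xi}(h)\|_\infty\le C|\log\de|\|h\|_*$ gives, for $\phi,\phi_1,\phi_2\in\mathcal{B}$,
\[
\|\mathcal{A}_{\mu,\xi}(\phi)\|_\infty\le C|\log\de|\big(\|R\|_*+\|\phi\|_\infty^2\big),\qquad \|\mathcal{A}_{\mu,\xi}(\phi_1)-\mathcal{A}_{\mu,\xi}(\phi_2)\|_\infty\le C|\log\de|\,(\|\phi_1\|_\infty+\|\phi_2\|_\infty)\|\phi_1-\phi_2\|_\infty.
\]
On $\mathcal{B}$ the quadratic term $|\log\de|\,\|\phi\|_\infty^2$ is of strictly higher order than the right–hand side defining $\mathcal{B}$, so for $C_*$ large and $\de$ small $\mathcal{A}_{\mu,\xi}$ maps $\mathcal{B}$ into itself and is a contraction there. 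Banach's fixed point theorem then yields the unique $\phi=\phi(\mu,\xi)$, and the constants $c_{0i}(\mu,\xi),c_{ij}(\mu,\xi)$ are read off from \eqref{plco}; the bound on $\|\phi\|_\infty$ thus obtained is precisely \eqref{cotaphi1bis}.

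It remains to establish differentiability in $(\mu,\xi)$ and the bound \eqref{cotadphi1bis}. I would apply the implicit function theorem to $\mathcal{G}(\phi,\mu,\xi)=\phi+T_{\mu,\xi}\big(R+N(\phi)\big)$, which is of class $C^1$ in $\phi$, twice-differentiable in $\mu$ and one-differentiable in $\xi$, by the corresponding regularity of $T_{\mu,\xi}$ in Proposition \ref{p2} and of $R,N$ in the bubble parameters: at a zero, $D_\phi\mathcal{G}=\mathrm{Id}+T_{\mu,\xi}\circ D_\phi N(\phi)$ is invertible since $\|T_{\mu,\xi}\circ D_\phi N(\phi)\|\le C|\log\de|\,\|\phi\|_\infty\to 0$. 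Differentiating the identity $\phi=\mathcal{A}_{\mu,\xi}(\phi)$ and using the chain rule, one is reduced to estimating in $\|\cdot\|_*$ the derivatives $\fr_{\mu_i}R,\ \fr_{(\xi_j)_i}R,\ \fr_{\mu_i}N,\ \fr_{(\xi_j)_i}N$ and the second derivatives $\fr_{\mu_i\mu_k}$, for which the same computations as for $R$ and $N$ apply — each differentiation in $\xi_j$ costing at most a factor $\de^{-1}$ and each in $\mu_i$ only a bounded factor — while the $\mu,\xi$–derivatives of $T_{\mu,\xi}(h)$ are absorbed through \eqref{estd}. A bootstrap on the chain-rule terms then produces \eqref{cotadphi1bis}, and twice-differentiability in $\mu$ follows by one further iteration. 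This is the scheme of \cite{EF}; I expect the main obstacle to be exactly the bookkeeping of the weighted norms of $\fr_{(\xi_j)_i}N$ and $\fr_{\mu_i\mu_k}N$ — in particular of the ratio-of-integral pieces — where the losses of powers of $\de$ must be balanced precisely against the $|\log\de|$ gains supplied by \eqref{estd}.
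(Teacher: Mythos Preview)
Your proposal is correct and follows essentially the same approach as the paper: recast \eqref{pnlabis} as the fixed point problem $\phi=-T(R+N(\phi))$, run a contraction on a ball of the size dictated by Lemma~\ref{estrr0} using the quadratic bound $\|N(\phi)\|_*\le C\|\phi\|_\infty^2$, and then obtain differentiability and \eqref{cotadphi1bis} via the implicit function theorem together with estimates on $\partial_\beta R$, $\partial_\beta N$ and the derivative bounds \eqref{estd} for $T$. The paper makes explicit the key inequality $\|\partial_\beta N(\phi)\|_*\le C\big(\|\partial_\beta W\|_\infty\|\phi\|_\infty^2+\|\phi\|_\infty\|\partial_\beta\phi\|_\infty\big)$ and the sharp order of $\|\partial_{\xi_l}R\|_*$ and $\|\partial_{\mu_k}R\|_*$, which is exactly the bookkeeping you anticipate as the main obstacle.
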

\noindent The function $[W+\phi](\mu,\xi)$ will be a true solution of \eqref{ephi} if $\mu\in\ml M$ and $\xi\in \Xi$ are such that
$c_{0i}(\mu,\xi)=c_{ij}(\mu, \xi)=0$ for all $i=1,2,$ and $j=1,\dots,m$. This problem is equivalent to finding critical
points of the reduced energy $E_{\lambda_1,\la_2}(\mu, \xi)= J_{\lambda_1,\la_2}\big([W+\phi](\mu,\xi)\big)$, where $J_{\lambda_1,\la_2}$ is given by \eqref{energy}, as stated in (we omit its proof):
\begin{lem}\label{cpfc0bis}
There exists $\delta_0$ such that, if $(\mu,\xi)\in
\ml{M}\times \Xi$ is a critical point of $E_{\lambda_1,\la_2}$ for $0< \de\le \de_0$, then
$u=W(\mu,\xi)+\phi(\mu ,\xi)$ is a solution to \eqref{mfewt}, where
$\de_i$ are given by \eqref{repla0}.
\end{lem}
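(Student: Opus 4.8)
The plan is to carry out the final step of the Lyapunov--Schmidt scheme: to show that criticality of $E_{\la_1,\la_2}$ annihilates all the Lagrange multipliers produced by Proposition~\ref{lpnlabis}. By that proposition, for $(\mu,\xi)\in\ml M\times\Xi$ and $\de$ small the triple $\big(\phi(\mu,\xi),c_{0i}(\mu,\xi),c_{ij}(\mu,\xi)\big)$ solves \eqref{pnlabis}; comparing \eqref{pnlabis} with \eqref{ephi}, the function $u=W(\mu,\xi)+\phi(\mu,\xi)$ is an exact solution of \eqref{mfewt} precisely when $c_{0i}(\mu,\xi)=c_{ij}(\mu,\xi)=0$ for all $i=1,2$ and $j=1,\dots,m$. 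So everything reduces to proving that a critical point of $E_{\la_1,\la_2}$ forces these $2m+2$ numbers to vanish.

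First I would differentiate the reduced functional. Let $a$ range over the $2m+2$ parameters $\mu_1,\mu_2,(\xi_j)_1,(\xi_j)_2$ ($j=1,\dots,m$). By the chain rule $\partial_a E_{\la_1,\la_2}(\mu,\xi)=DJ_{\la_1,\la_2}(u)[\partial_a u]$ with $u=W+\phi$. Since $J_{\la_1,\la_2}$ is invariant under adding constants, $\int_S u\,dv_g=0$ for every $(\mu,\xi)$, and $u$ satisfies the PDE in \eqref{pnlabis}, testing that PDE against $\partial_a u$, integrating by parts, and noting that the bulk nonlinear terms cancel against $DJ_{\la_1,\la_2}(u)[\partial_a u]$, one is left with
$$\partial_a E_{\la_1,\la_2}(\mu,\xi)=-\sum_{i=1}^2\Big[c_{0i}\int_S \lab PZ_i\,\partial_a u\,dv_g+\sum_{j=1}^m c_{ij}\int_S \lab PZ_{ij}\,\partial_a u\,dv_g\Big].$$
Therefore, at a critical point of $E_{\la_1,\la_2}$ the vector $(c_{01},c_{02},(c_{ij}))$ lies in the kernel of the $(2m+2)\times(2m+2)$ matrix $M(\mu,\xi)$ whose rows are indexed by $a$ and whose entries are the pairings $\int_S\lab PZ_i\,\partial_a u\,dv_g$ and $\int_S\lab PZ_{ij}\,\partial_a u\,dv_g$. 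It remains to prove that $M(\mu,\xi)$ is invertible for $\de$ small.

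To study $M$, split $\partial_a u=\partial_a W+\partial_a\phi$. The $\phi$-contributions are negligible: differentiating the orthogonality conditions in \eqref{pnlabis} gives $\int_S\partial_a\phi\,\lab PZ_i\,dv_g=-\int_S\phi\,\partial_a(\lab PZ_i)\,dv_g$ (and similarly with $PZ_{ij}$), which by \eqref{cotaphi1bis}--\eqref{cotadphi1bis} and the known sizes of $\lab PZ_i$, $\lab PZ_{ij}$ and their $(\mu,\xi)$-derivatives is $o(1)$ relative to the diagonal entries below. For the $W$-part one uses the expansion of Lemma~\ref{ewfxi} and \eqref{repla0}: to leading order (modulo an additive constant, which pairs trivially with $\lab PZ_i$) $\partial_{\mu_i}W\simeq-\mu_i^{-1}Z_i$, the collapsed dilation mode of the $i$-th group of bubbles, while $\partial_{(\xi_j)_k}W\simeq\de_j^{-1}Z_{kj}$, the translation mode based at $\xi_j$. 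Rescaling $y=y_{\xi_j}(x)/\de_j$, the diagonal pairings $\int_S\lab PZ_i\,\partial_{\mu_i}W\,dv_g$ and $\int_S\lab PZ_{kj}\,\partial_{(\xi_j)_k}W\,dv_g$ tend to nonzero multiples of $\int_{\R^2}(1+|y|^2)^{-2}Y_0^2\,dy$ and $\int_{\R^2}(1+|y|^2)^{-2}Y_k^2\,dy$, while every off-diagonal pairing is $o(1)$ after the same normalization, because distinct bubbles are $4r_0$-separated and $\int_{\R^2}(1+|y|^2)^{-2}Y_iY_k\,dy=0$ for $i\neq k$ (including $k=0$). Hence, multiplying each row $\partial_{(\xi_j)_k}E_{\la_1,\la_2}$ by $\de_j$ (the rows $\partial_{\mu_i}E_{\la_1,\la_2}$ being already $O(1)$), $M(\mu,\xi)$ becomes a small perturbation of a nonsingular diagonal matrix, so it is invertible. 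Combined with the first two paragraphs this yields $c_{0i}(\mu,\xi)=c_{ij}(\mu,\xi)=0$, and $u=W(\mu,\xi)+\phi(\mu,\xi)$ solves \eqref{mfewt}.

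The main obstacle is precisely this last step — the uniform-in-$\de$ invertibility of $M$ — and within it the control of the off-diagonal blocks that couple the two groups of bubbles and the two parameters $\mu_1,\mu_2$. The genuinely new feature with respect to the Dirichlet Liouville case of \cite{DeKM,EGP} is that the constant function lies in the kernel of the limit operator $\hat L$, which is exactly why the two collapsed dilation modes $PZ_1,PZ_2$ and the corresponding equations $\partial_{\mu_1}E_{\la_1,\la_2}=\partial_{\mu_2}E_{\la_1,\la_2}=0$ must enter $M$; the verification that they do not spoil invertibility is carried out exactly as in \cite{EF}.
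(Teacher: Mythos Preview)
Your argument is correct and is precisely the standard Lyapunov--Schmidt step the paper has in mind: in Section~\ref{appeB} the paper states that ``Lemma~\ref{cpfc0bis} is rather standard and we will omit its proof'', so there is no paper proof to compare against beyond the implicit reference to \cite{EF,DeKM,EGP}, whose scheme you have reproduced. One tiny inaccuracy (harmless for invertibility): for $j\in\{m_1+1,\dots,m\}$ the leading behaviour is $\partial_{(\xi_j)_k}W\simeq -\tau^{-1}\de_j^{-1}Z_{kj}$ and $\partial_{\mu_2}W\simeq \tau^{-1}\mu_2^{-1}Z_2$ (modulo constants), because those bubbles enter $W$ with weight $-1/\tau$; this only multiplies the corresponding diagonal entries of $M$ by a nonzero factor and does not affect the conclusion.
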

\noindent Once equation \eqref{mfewt} has been reduced to the search of c.p.'s for $E_{\lambda_1,\la_2} $, it becomes crucial to show that the main asymptotic term of $E_{\lambda_1,\la_2} $ is given by $J_{\lambda_1,\la_2} (W)$, for which an expansion has been given in Theorem \ref{expansionenergy}. More precisely, by estimates in Appendix B we have that
\begin{theo} \label{fullexpansionenergy}
Assume \eqref{repla0} - \eqref{repla1}. The following expansion does
hold
\begin{eqnarray} \label{fullJUt}
E_{\lambda_1,\la_2} (\mu,\xi) &=&-8\pi \Big(m_1 + {m_2\over\tau^2}\Big) - \lambda_1 \log (\pi m_1) - \la_2\log(\pi m_2) + 2\big(\lambda_1 -8\pi m_1 \big)\log\de \nonumber \\
&&\,\, + {2\over \tau^2} \big(\lambda_2\tau^2 -8\pi m_2 \big)\log\de -32\pi^2 \varphi_m^*(\xi)+ 2\big(\lambda_1 -8\pi m_1 \big)\log\mu_1 \\
&&\,\, + A_1^*(\xi) \mu_1^2\delta^2\log  \delta + \lf[A_1^*(\xi) \mu_1^2 \log\mu_1- B_1^*(\xi)\mu_1^2\rg] \delta^2 + {1\over\tau^2}\Big\{ 2\big(\la_2\tau^2 -8\pi m_2 \big) \log\mu_2 \nonumber \\
&&\,\, + A_2^*(\xi) \mu_2^2 \de^2\log
 \de +  \lf[A_2^*(\xi) \mu_2^2\log\mu_2 - B_2^*(\xi) \mu_2^2\rg] \de^2\Big\} \nonumber  + o(\de^2) +r_{\lambda_1,\la_2}(\mu,\xi) \nonumber
\end{eqnarray}
in $C^2(\mathbb{R}^2)$ and $C^1(\Xi)$ as $\de\to 0^+$, where
$\varphi_m^*(\xi)$, $A_k^*(\xi)$ and $B_k^*(\xi)$, $k=1,2$  are given by \eqref{fim}, \eqref{vk} and \eqref{Bk}, $k=1,2$, respectively. The term $r_{\lambda_1,\la_2}(\mu,\xi)$ satisfies
\begin{eqnarray} \label{rlambda}
|r_{\lambda_1,\la_2}(\mu,\xi)|&+&\frac{\de |\nabla_\xi r_{\lambda_1,\la_2}(\mu,\xi)|}{|\log \de|} +\frac{|\nabla_\mu r_{\lambda_1,\la_2}(\mu,\xi)|}{|\log \de|} \nonumber \\ 
&+&\frac{|D^2_{\mu} r_{\lambda_1,\la_2}(\mu,\xi)|}{|\log \de|^2}  \leq C\lf( \delta^2 |\log \delta |\, |\nabla \varphi_m^*(\xi)|_g^2 + \de^{3-\sigma}|\log\de|^2\rg)
\end{eqnarray}
for some $C>0$ independent of $(\mu,\xi)\in \ml M\times\Xi$.
\end{theo}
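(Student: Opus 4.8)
The starting point is the identity $E_{\lambda_1,\la_2}(\mu,\xi)=J_{\lambda_1,\la_2}\big(W+\phi\big)$, where $\phi=\phi(\mu,\xi)$ is the function produced by Proposition \ref{lpnlabis}, so the whole statement amounts to comparing $J_{\lambda_1,\la_2}(W+\phi)$ with $J_{\lambda_1,\la_2}(W)$, whose expansion is the content of Theorem \ref{expansionenergy} and already produces all the explicit terms (and the $o(\de^2)$ remainder) of \eqref{fullJUt}. Applying the fundamental theorem of calculus twice to $t\mapsto J_{\lambda_1,\la_2}(W+t\phi)$ gives
\begin{equation*}
J_{\lambda_1,\la_2}(W+\phi)-J_{\lambda_1,\la_2}(W)=DJ_{\lambda_1,\la_2}(W)[\phi]+\int_0^1(1-t)\,D^2J_{\lambda_1,\la_2}(W+t\phi)[\phi,\phi]\,dt=:r_{\lambda_1,\la_2}(\mu,\xi),
\end{equation*}
so the proof reduces to estimating $r_{\lambda_1,\la_2}$ and its $\mu$- and $\xi$-derivatives as in \eqref{rlambda}.

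For the linear term, integrating by parts, using \eqref{R} and the fact that $\int_S\phi\,dv_g=0$ (which kills the constants $-1/|S|$), one obtains the clean identity $DJ_{\lambda_1,\la_2}(W)[\phi]=-\int_S R\,\phi\,dv_g$. Since the weight defining $\|\cdot\|_*$ has uniformly bounded $L^1(S)$-norm, $\int_S|R\,\phi|\,dv_g\le C\|R\|_*\|\phi\|_\infty$, and Lemma \ref{estrr0}, the bound \eqref{cotaphi1bis} and \eqref{repla1} give $|DJ_{\lambda_1,\la_2}(W)[\phi]|\le C\big(\de^2|\log\de|\,|\nabla\varphi_m^*(\xi)|_g^2+\de^{3-\sigma}|\log\de|^2\big)$, inside \eqref{rlambda}. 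For the quadratic term one uses that testing \eqref{pnlabis} against $\phi$ makes the coefficients $c_{0i},c_{ij}$ drop out by the orthogonality conditions, together with the elementary identity $\int_S\phi\,L(\phi)\,dv_g=-D^2J_{\lambda_1,\la_2}(W)[\phi,\phi]$; hence $D^2J_{\lambda_1,\la_2}(W)[\phi,\phi]=\int_S\phi\,[R+N(\phi)]\,dv_g$. Invoking $\|N(\phi)\|_*\le C\|\phi\|_\infty^2$, which follows from the quadratic-in-$\phi$ structure of \eqref{ni} and the uniform $L^\infty$ bound on $\phi$, and then passing from $W$ to $W+t\phi$ at the cost of a third-order term of size $O(\|\phi\|_\infty^3)$, one gets $|D^2J_{\lambda_1,\la_2}(W+t\phi)[\phi,\phi]|\le C\|\phi\|_\infty(\|R\|_*+\|\phi\|_\infty^2)$, which by \eqref{cotaphi1bis} and Lemma \ref{estrr0} is again bounded by $C(\de^2|\log\de|\,|\nabla\varphi_m^*(\xi)|_g^2+\de^{3-\sigma}|\log\de|^2)$. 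This proves the $C^0$ part of \eqref{rlambda}.

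For the derivative estimates one differentiates the two displayed expressions for $r_{\lambda_1,\la_2}$ directly, since $R$, $N(\phi)$, $\phi$ and $L$ all depend smoothly on $(\mu,\xi)$. Differentiating $-\int_S R\,\phi\,dv_g$ and $\int_S\phi\,[R+N(\phi)]\,dv_g$ produces terms such as $\int_S(\partial R)\phi$, $\int_S R(\partial\phi)$, $\int_S(\partial\phi)N(\phi)$, $\int_S\phi\,(\partial N)(\phi)$, which are controlled through: the bounds on $\partial_{\mu_i}R$ and $\partial_{(\xi_j)_i}R$ (established alongside Lemma \ref{estrr0}), the $C^1$-in-$\xi$ and $C^2$-in-$\mu$ bounds \eqref{estd} and \eqref{cotadphi1bis} for $\phi$, and the pointwise control of $N(\phi)$ and its derivatives in $\|\cdot\|_*$. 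Each $\mu$-derivative costs a factor $|\log\de|$ and each $\xi$-derivative a factor $\de^{-1}|\log\de|$, exactly as recorded in \eqref{rlambda}; the $C^2$-in-$\mu$ and $C^1$-in-$\xi$ behaviour of the $o(\de^2)$ term in \eqref{fullJUt} is inherited from Theorem \ref{expansionenergy}.

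The main obstacle is obtaining the quadratic remainder estimate sharply enough: bounding $D^2J_{\lambda_1,\la_2}(W+t\phi)[\phi,\phi]$ crudely by $C\|\phi\|_\infty^2$ would lose a factor $|\log\de|$, because of the $|\log\de|$ already present in $\|\phi\|_\infty\le C|\log\de|\|h\|_*$ from Proposition \ref{p2}. One therefore genuinely needs the identity $D^2J_{\lambda_1,\la_2}(W)[\phi,\phi]=\int_S\phi[R+N(\phi)]\,dv_g$ together with the refined bound $\|N(\phi)\|_*\le C\|\phi\|_\infty^2$, rather than a Sobolev-trace estimate, and then to propagate this sharpness through the $\mu$- and $\xi$-derivatives while tracking the powers of $|\log\de|$ and of $|\nabla\varphi_m^*(\xi)|_g$. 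This is the delicate but routine bookkeeping carried out in Appendix \ref{appeB}.
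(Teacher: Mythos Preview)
Your proposal is correct and follows essentially the same approach as the paper: both expand $J_{\lambda_1,\lambda_2}(W+\phi)-J_{\lambda_1,\lambda_2}(W)$ via second-order Taylor, use $DJ(W)[\phi]=-\int_S R\,\phi\,dv_g$ and $D^2J(W)[\phi,\phi]=\int_S\phi\,[R+N(\phi)]\,dv_g$ (the latter obtained by testing \eqref{pnlabis} against $\phi$) together with $[D^2J(W+t\phi)-D^2J(W)][\phi,\phi]=O(\|\phi\|_\infty^3)$, and then differentiate in $\mu$ and $\xi$. The only cosmetic differences are that the paper writes the Taylor remainder as a double integral $\int_0^1\!\int_0^1\cdots\,t\,ds\,dt$ instead of $\int_0^1(1-t)\cdots\,dt$, and the $\partial_\beta R$ bounds you invoke are actually recorded inside the proof of Proposition \ref{lpnlabis} rather than alongside Lemma \ref{estrr0}.
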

\noindent We are now in position to establish the main result stated in the Introduction. We shall argue similarly to \cite[Theorem 1.5]{EF}.
\begin{proof}[{\bf Proof (of Theorem \ref{main2}):}]  According to Lemma \ref{cpfc0bis}, we just need to find a critical point of $E=E_{\la_1,\la_2}(\mu,\xi)$ with $\mu=(\mu_1,\mu_2)$. Recall that $\tau>0$ is fixed. Assumptions \eqref{cond0} and \eqref{cond1} allows us to choose $\mu_k=\mu_k(\la_k,\xi)$ for $\la_k\tau^{2(k-1)}$ close to $8\pi m_k$, $k=1,2$, respectively. Precisely, fixing $k\in\{1,2\}$ we choose $\la_k\tau^{2(k-1)}-8\pi m_k=\de^2$ ($-\de^2$ resp.) if either $A_k^*(\xi)>0$ ($<0$ resp.) or $A^*(\xi)=0$, $B_k^*(\xi)>0$ ($<0$ resp.) in $U$. Thus, we deduce the expansions
\begin{equation*}
\begin{split}
\frac{ \tau^{2(k-1)} \fr_{\mu_k} E(\mu,\xi)}{\la_k\tau^{2(k-1)}-8\pi m_k} =&\, \frac{2}{\mu_k} + 2 A_k^*(\xi) \mu_k \log\de+A_k^*(\xi)(2\mu_k\log\mu_k+\mu_k)-2B_k^*(\xi)\mu_k\\
&\,+ o(1) + O\lf(|\log\de |^2\,|\grad\varphi_m^*(\xi)|^2_g\rg)
\end{split}
\end{equation*}
and
\begin{equation*}
\begin{split}
\frac{ \tau^{2(k-1)} \fr_{\mu_k\mu_k} E(\mu,\xi)}{\la_k\tau^{2(k-1)}-8\pi m_k} =&\, -\frac{2}{\mu_k^2} + 2 A_k^*(\xi)\log\de +A_k^*(\xi)(2\log\mu_k+3) -2B_k^*(\xi)  \\
&\,+ o(1) + O\lf(|\log \de |^3\,|\grad\varphi_m^*(\xi)|^2_g\rg),
\end{split}
\end{equation*}
as $\de\to 0^+$. Arguing in the same way as in the proof of Theorem 3.2 in \cite{EF}, we conclude existence of a $C^1$ map $\mu_k=\mu_k(\la_k,\xi)$ satisfying $\fr_{\mu_k} E(\mu(\la,\xi),\xi)=0$, with $\la=(\la_1,\la_2)$ and $\mu=(\mu_1,\mu_2)$ for all $\xi\in U$. Now, considering $\tilde E (\xi)=E_{\la}\big(\mu_1(\lambda_1,\xi),\mu_2(\la_2,\xi),\xi\big)$ and again arguing in the same way as in the proof of Theorem 3.2 in \cite{EF} it follows that $\tilde E (\xi)=-32\pi^2
\varphi_m^*(\xi)+ O(\de^2|\log\de|)$,
\begin{equation*}
\begin{split}
\nabla_\xi \tilde E (\xi)& = \nabla_\xi E \big(\mu_1 (\lambda_1,\xi),\mu_2(\la_2,\xi),\xi\big)+\grad_\mu E \big( \mu_1(\lambda_1,\xi),\mu_2(\la_2,\xi),\xi\big)\grad_\xi \mu(\lambda,\xi)\\
&=-32\pi^2 \nabla \varphi_m^*(\xi)+ O(\de |\log \de|^{2})
\end{split}
\end{equation*}
uniformly in $\xi \in U$ and there exists a
critical point $\xi_{\lambda_1,\la_2}=\xi_\de \in U$ of $\tilde E (\xi)$, since $\DD$ is a stable critical set of $\varphi_m^*$ (see Definition \ref{stable}).  
Up to take $U$ smaller so that $\grad\varphi_m^*(\xi)\ne 0$ for all $\xi\in U\sm\DD$, it can be deduced that the pair $\big(\mu(\lambda_1,\la_2,\xi_\de),\xi_\de\big)$ is a c.p. of $ E(\mu,\xi)$ and, along a sub-sequence, $\xi_\de \to q \in \DD$ as $\de\to 0$, namely, as $\la_1 \to 8\pi m_1$ and $\la_2\tau^2\to 8\pi m_2$. By construction, the corresponding solution has the required asymptotic properties \eqref{conc}. See proof of Theorem 1.5 in \cite{EF} for more details. This completes the proof.
\end{proof}


\section{The reduced energy}\label{sec4}

\noindent The purpose of this section is to give an asymptotic
expansion of the ``reduced energy" $J_{\la_1,\la_2}(W)$, where
$J_{\la_1,\la_2}$ is the energy functional given by \eqref{energy}. For
technical reasons, we will be concerned with establishing it in a
$C^2$-sense in $\mu$ and just in a $C^1$-sense in $\xi$. To this
aim, the following result will be very useful, see \cite[Lemma 3.1]{EF} for a proof.

\begin{lem}\label{ieuf}
Letting $f\in C^{2,\gamma}(S)$ (possibly depending in $\xi$),
$0<\gamma<1$, denote as $P_2(f)$ the second-order Taylor expansion
of $f(x)$ at $\xi$:
$$P_2 f(x)=f(\xi)+\langle\grad (f \circ y_\xi^{-1}) (0), y_\xi(x)\rangle+{1\over2}\langle
D^2 (f\circ y_\xi^{-1})(0)y_\xi(x), y_\xi(x) \rangle.$$ The
following expansions do hold as $\delta \to 0$:
\begin{eqnarray*}
\int_S \chi_\xi e^{-\varphi_\xi} f(x) e^{U_{\de,\xi}} dv_g&=& 8\pi f(\xi)-2 \de^2 \Delta_g f (\xi) \left[ 2\pi \log \delta+ \int_{\mathbb{R}^2} {\chi'(|y|) \log |y|\over |y| } dy +\pi \right]\\
&&+8\de^2\int_S \chi_\xi e^{-\varphi_\xi} {f(x)-P_2(f)(x)\over
|y_{\xi}(x)|^4}\,dv_g+ 4\de^2 f(\xi) \int_{\mathbb{R}^2}
{\chi'(|y|) \over |y|^3 } dy+o(\delta^2),
\end{eqnarray*}
\begin{eqnarray*}
\int_S \chi_\xi e^{-\varphi_\xi} f(x)
e^{U_{\de,\xi}}\frac{dv_g}{\delta^2+|y_\xi(x)|^2} =
\frac{4\pi}{\delta^2}f(\xi)+\pi \Delta_g f(\xi)+O(\delta^{\gamma})
\end{eqnarray*}
and
\begin{eqnarray*}
\int_S \chi_\xi e^{-\varphi_\xi} f(x) e^{U_{\de,\xi}}\frac{a
\delta^2-|y_\xi(x)|^2}{(\delta^2+|y_\xi(x)|^2)^2} dv_g =\frac{4
\pi}{3 \de^2}(2a-1) f(\xi)+(a-2)\frac{\pi}{3} \Delta_g f(\xi)
+O(\delta^\gamma)
\end{eqnarray*}
for $a \in \mathbb{R}$.
\end{lem}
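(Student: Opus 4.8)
\medskip\noindent\textbf{Proof proposal.} The plan is to pull all three integrals back to the isothermal chart $y_\xi$ and then expand. Writing $\tilde f:=f\circ y_\xi^{-1}$ and recalling that $g=e^{\hat\varphi_\xi}dx$ with $\varphi_\xi=\hat\varphi_\xi\circ y_\xi$, the weight $e^{-\varphi_\xi}\,dv_g$ pulls back to the flat measure $dy$ on $B_{2r_0}(0)$, while $e^{U_{\delta,\xi}(x)}=8\delta^2/(\delta^2+|y_\xi(x)|^2)^2$. Hence each left-hand side becomes $\int_{\mathbb R^2}\chi(|y|)\,\tilde f(y)\,w_\delta(|y|)\,dy$ for an explicit radial weight $w_\delta$. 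It is convenient to also record, from the conformal law \eqref{laplacian} together with $\hat\varphi_\xi(0)=0$, that $\Delta\tilde f(0)=\Delta_g f(\xi)$, so every Laplacian in the final formulas is the intrinsic one; moreover, by the symmetry identity $\int y_iy_j\,\psi(|y|)\,dy=\tfrac{\delta_{ij}}{2}\int|y|^2\psi(|y|)\,dy$, the Hessian of $\tilde f$ at $0$ enters only through $\tfrac12\Delta_g f(\xi)$.

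For the second and third integrals — the ones carrying the extra factors $(\delta^2+|y_\xi|^2)^{-1}$ and $(a\delta^2-|y_\xi|^2)(\delta^2+|y_\xi|^2)^{-2}$ — I would rescale $y=\delta z$, turning $w_\delta$ into $\delta^{-2}$ times a kernel that is integrable on $\mathbb R^2$, and Taylor-expand $\tilde f(\delta z)=\tilde f(0)+\delta\langle\nabla\tilde f(0),z\rangle+\tfrac{\delta^2}{2}\langle D^2\tilde f(0)z,z\rangle+R(z)$ with $|R(z)|\le C(\delta|z|)^{2+\gamma}$ from the $C^{2,\gamma}$ bound. The linear term vanishes by oddness in $z$; the zeroth- and second-order terms are evaluated from the explicit moments $\int_{\mathbb R^2}(1+|z|^2)^{-k}dz$ and $\int_{\mathbb R^2}|z|^2(1+|z|^2)^{-k}dz$ for $k=3,4$ (and their combinations with the factor $a-|z|^2$), which produces exactly the coefficients $4\pi f(\xi)$, $\pi\Delta_g f(\xi)$ and the $a$-dependent versions; the remainder contributes $O(\delta^\gamma)$ since $\delta^{-2}\cdot\delta^{2+\gamma}\int_{\mathbb R^2}|z|^{2+\gamma}(1+|z|^2)^{-k}dz<\infty$, and the tail $|y|>r_0$, where $\chi\not\equiv1$, contributes only $O(\delta^2)$.

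The first integral is the delicate one: a naive rescaling runs into the logarithmically divergent second moment $\int_{\mathbb R^2}|z|^2(1+|z|^2)^{-2}dz=\infty$, which is the very origin of the $\log\delta$ term. I would instead split $f=P_2f+(f-P_2f)$. Since $(f-P_2f)\circ y_\xi^{-1}(y)=O(|y|^3)$, the function $(f-P_2f)\circ y_\xi^{-1}/|y|^4$ is locally integrable and $|y|^4(\delta^2+|y|^2)^{-2}\uparrow1$, so dominated convergence yields the clean term $8\delta^2\int_S\chi_\xi e^{-\varphi_\xi}\,\frac{f-P_2f}{|y_\xi|^4}\,dv_g+o(\delta^2)$. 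For $P_2f$, the linear part again drops by oddness; the constant part $f(\xi)\int\chi(|y|)\,\frac{8\delta^2}{(\delta^2+|y|^2)^2}\,dy$ and the quadratic part $\tfrac14\Delta_g f(\xi)\int\chi(|y|)\,|y|^2\,\frac{8\delta^2}{(\delta^2+|y|^2)^2}\,dy$ are each obtained by one integration by parts in $r=|y|$, using $\frac{r}{(\delta^2+r^2)^2}=-\tfrac12\frac{d}{dr}(\delta^2+r^2)^{-1}$ and $\frac{r^3}{(\delta^2+r^2)^2}=\frac{r}{\delta^2+r^2}-\frac{\delta^2 r}{(\delta^2+r^2)^2}$ with $\frac{r}{\delta^2+r^2}=\tfrac12\frac{d}{dr}\log(\delta^2+r^2)$: the boundary terms at $r=0$ produce $8\pi f(\xi)$ and the $-4\pi\delta^2\Delta_g f(\xi)\log\delta$, whereas the integrals against $\chi'$ produce, after converting $2\pi\int_0^\infty(\cdot)\,r\,dr$ back to planar integrals and discarding $O(\delta^4)$, the term $4\delta^2 f(\xi)\int_{\mathbb R^2}\chi'(|y|)|y|^{-3}dy$ and the $\int_{\mathbb R^2}\chi'(|y|)|y|^{-1}\log|y|\,dy$ contribution. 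Collecting all pieces reproduces the first displayed identity.

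The main obstacle is precisely this logarithmic divergence in the first integral: one cannot Taylor-expand-and-integrate directly, so the $P_2f$ split together with the radial integration-by-parts bookkeeping is essential, and the local integrability of $(f-P_2f)/|y_\xi|^4$ must be invoked to make sense of the resulting coefficient. The $C^2$-in-$\mu$ / $C^1$-in-$\xi$ refinements used elsewhere in the paper then follow by differentiating under the integral sign — legitimate because $y_\xi$ and $\hat\varphi_\xi$ depend smoothly on $\xi$ — and repeating the same estimates on the differentiated kernels.
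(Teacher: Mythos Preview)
Your proposal is correct and is the natural approach; the paper itself does not supply a proof but simply cites \cite[Lemma 3.1]{EF}, so there is nothing to compare against beyond noting that your outline is exactly the standard computation behind that reference: pull back to isothermal coordinates, rescale the two integrals with finite second moment, and for the first integral split off $P_2f$ and integrate by parts in $r$ to extract the $\log\delta$ from the divergent moment $\int|z|^2(1+|z|^2)^{-2}dz$.

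One small correction: for $f\in C^{2,\gamma}$ with $0<\gamma<1$ the Taylor remainder satisfies only $(f-P_2f)\circ y_\xi^{-1}(y)=O(|y|^{2+\gamma})$, not $O(|y|^3)$ as you wrote. This does not harm the argument --- $|y|^{2+\gamma}/|y|^4=|y|^{\gamma-2}$ is still locally integrable in $\mathbb R^2$, so the dominated-convergence step for $\int\chi_\xi e^{-\varphi_\xi}\frac{f-P_2f}{|y_\xi|^4}\,dv_g$ goes through exactly as you describe --- but the justification should invoke the H\"older regularity rather than a nonexistent $C^3$ bound.
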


\medskip \noindent We are now ready to establish the expansion of $J_{\la_1,\la_2}(W)$:
\begin{theo} \label{expansionenergy}
Assume \eqref{repla0}-\eqref{repla1}. The following expansion does
hold
\begin{eqnarray} \label{JUt}
J_{\lambda_1,\la_2} (W) &=&\,\, -8\pi \Big(m_1+{m_2\over\tau^2}\Big) -\lambda_1 \log (\pi
m_1)-\la_2\log(\pi m_2)  + 2\big(\lambda_1 -8\pi m_1 \big)\log\de \nonumber\\
&&\,\, + {2\over \tau^2} \big(\lambda_2\tau^2 -8\pi m_2 \big)\log\de -32\pi^2 \varphi_m^*(\xi)+ 2\big(\lambda_1 -8\pi m_1 \big)\log\mu_1\\
&&\,\, + A_1^*(\xi) \mu_1^2\delta^2\log  \delta + \lf[A_1^*(\xi) \mu_1^2 \log\mu_1- B_1^*(\xi)\mu_1^2\rg] \delta^2+ {1\over\tau^2}\Big\{ 2\big(\la_2\tau^2 -8\pi m_2 \big) \log\mu_2 \nonumber \\
&&\,\, + A_2^*(\xi) \mu_2^2 \de^2\log
 \de +  \lf[A_2^*(\xi) \mu_2^2\log\mu_2 - B_2^*(\xi) \mu_2^2\rg] \de^2\Big\} + o(\de^2)\nonumber
\end{eqnarray}
in $C^2(\mathbb{R}^2)$ and $C^1(\Xi)$ as $\de \to 0^+$, where
$\varphi_m^*(\xi)$, $A_1^*(\xi)$, $A_2^*(\xi)$, $B_{1}^*(\xi)$ and $B_{2}^*(\xi)$ are given by \eqref{fim}, \eqref{vk} and \eqref{Bk}, $k=1,2$, respectively.
\end{theo}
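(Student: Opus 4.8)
The plan is to compute $J_{\lambda_1,\la_2}(W)$ term by term, splitting the functional \eqref{energy} into the Dirichlet energy $\frac12\int_S|\nabla W|_g^2\,dv_g$ and the two logarithmic terms $-\lambda_1\log\int_S V_1 e^{W}\,dv_g$ and $-\lambda_2\log\int_S V_2 e^{-\tau W}\,dv_g$. Since $W=\sum_{j=1}^{m_1}W_j-\frac1\tau\sum_{j=m_1+1}^m W_j$ with $W_j=PU_{\delta_j,\xi_j}$, I would first insert the asymptotic expansion of $PU_{\delta,\xi}$ from Lemma \ref{ewfxi} into each piece. For the Dirichlet term I would integrate by parts, $\frac12\int_S|\nabla W|_g^2=-\frac12\int_S W\,\Delta_g W\,dv_g$, and use the defining equation \eqref{ePu} for each $W_j$, reducing everything to integrals of $\chi_\xi e^{-\varphi_\xi}e^{U_{\delta,\xi}}$ against smooth functions; the diagonal self-interaction contributes the $H(\xi_j,\xi_j)$ terms, the off-diagonal interactions contribute the $G(\xi_i,\xi_j)$ terms with the correct $\pm\frac1\tau$, $\frac1{\tau^2}$ combinatorial coefficients matching \eqref{fim}, and the local singular part near each $\xi_j$ produces the $\log\delta_j$ contributions. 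This is exactly the bookkeeping that turns into $-32\pi^2\varphi_m^*(\xi)$ after collecting all the constants, plus lower-order $\delta^2$ remainders.

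Next I would expand the two nonlinear log-terms. Using \eqref{repla0} to write $\delta_j^2=\mu_i^2\delta^2\rho_j(\xi_j)$, the key is that near $\xi_j$ (for $j\in\ml J_1$ in the first integral) one has $V_1 e^W = \frac{1}{8\delta_j^2}\rho_j(x)e^{U_j}(1+\text{l.o.t.})$, so $\int_S V_1 e^W\,dv_g$ is dominated by the $m_1$ bubbles and equals $\frac{\pi m_1}{\mu_1^2\delta^2}+O(|\log\delta|)$ as already established in \eqref{iV1eW}; the contributions of the "wrong-sign" bubbles ($j\in\ml J_2$) are $O(1)$ by \eqref{iV1eW2}. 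To get the expansion to order $\delta^2$ I would apply the three integral identities in Lemma \ref{ieuf} with $f=\rho_j$ (and use $\nabla\rho_j(\xi_j)=0$ at a critical-point-free estimate, or rather keep the $|\nabla\varphi_m^*|$-controlled terms), which produces precisely the terms $A_k^*(\xi)\mu_k^2\delta^2\log\delta$ (from the $\Delta_g\rho_j(\xi_j)$ and curvature contributions, recalling \eqref{vk}), and $[A_k^*(\xi)\mu_k^2\log\mu_k - B_k^*(\xi)\mu_k^2]\delta^2$ after taking the logarithm and Taylor-expanding $\log(1+x)$; the regularized integral over $S\setminus\cup B_r(\xi_j)$ together with the $1/r^2$ and $\log(1/r)$ counterterms is exactly what defines $B_k^*(\xi)$ in \eqref{Bk}. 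The same computation with $V_2 e^{-\tau W}$ and the roles of $\ml J_1,\ml J_2$ swapped gives the $\frac1{\tau^2}\{\cdots\}$ block. Multiplying by $-\lambda_1$ and $-\lambda_2$ respectively and using \eqref{repla1} to replace $\lambda_1=8\pi m_1+O(\delta^2|\log\delta|)$ where only the leading coefficient matters, one extracts the explicit terms $2(\lambda_1-8\pi m_1)\log\delta$, $2(\lambda_1-8\pi m_1)\log\mu_1$, $-\lambda_1\log(\pi m_1)$ and their $\lambda_2$-counterparts, with the constant $-8\pi(m_1+m_2/\tau^2)$ coming from combining the leading $\log$ with the $\log(8\delta_j^2)$ normalizations.

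The main obstacle will be obtaining the stated regularity of the expansion — $C^2$ in $\mu$ and $C^1$ in $\xi$ — rather than merely pointwise asymptotics, since the error terms hidden in "$o(\delta^2)$" must be shown to remain $o(\delta^2)$ after one $\xi$-derivative and after two $\mu$-derivatives. For the $\mu$-dependence this is manageable because $\mu$ enters only algebraically through $\delta_j^2=\mu_i^2\delta^2\rho_j(\xi_j)$, so differentiating in $\mu_i$ essentially rescales the explicit $\mu$-polynomial terms while the remainders keep their order; for the $\xi$-dependence one must differentiate the Green's function, the regular part $H$, the isothermal charts $y_\xi$ and $\hat\varphi_\xi$ (all smooth in $\xi$ by construction) and track that each $\xi$-derivative costs at most a factor $|\log\delta|$ in the remainder, which is absorbed in the $C^1(\Xi)$ claim together with the $|\nabla\varphi_m^*(\xi)|_g$-type control. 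I would handle this exactly as in \cite[Lemma 3.1, Section 3]{EF}, differentiating under the integral sign and re-applying Lemma \ref{ieuf} to the differentiated integrands, since the structure of \eqref{mfewt} differs from the $\lambda_2=0$ case of \cite{EF} only by the extra negative-bubble block, whose contribution is formally identical after the substitution $W\mapsto -\tau W$, $V_1\mapsto V_2$, $m_1\mapsto m_2$.
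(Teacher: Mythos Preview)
Your plan is correct and follows essentially the same route as the paper: split $J_{\lambda_1,\lambda_2}(W)$ into the Dirichlet piece (integrated by parts against \eqref{ePu}, with the cross terms handled via the identity $\int_S \chi_j e^{-\varphi_j}e^{U_j}G(\cdot,\xi_l)=PU_j(\xi_l)$) and the two log-terms (expanded via Lemma~\ref{ieuf} applied to $f=e^{\varphi_j}\rho_j$, which is exactly where the $A_k^*$ and $B_k^*$ constants emerge), then combine by Taylor-expanding the logarithm. For the $C^1(\Xi)$ and $C^2(\mathbb{R}^2)$ parts the paper does precisely what you suggest at the end---it differentiates $J_{\lambda_1,\lambda_2}(W)$ directly (computing $\partial_{(\xi_j)_i}W$, $\partial_{\mu_k}W$, $\partial_{\mu_k\mu_k}W$ explicitly in the style of Lemma~\ref{ewfxi}) and re-applies Lemma~\ref{ieuf} to the resulting integrals, rather than differentiating the already-obtained $C^0$ expansion term by term; your earlier remark that ``$\mu$ enters only algebraically'' is therefore a slight oversimplification, but you correct course in the last sentence.
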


\noindent As in \cite[Theorem 3.2]{EF}, the proof will be divided into several steps.
\begin{proof}[{\bf Proof (of (\ref{JUt}) in $C(\mathbb{R}^2 \times \Xi)$):}]
First, let us consider the term. Integrating by parts we have that
\begin{equation*}
\begin{split}
\int_S |\grad W|_g^2 dv_g 
&= \sum_{j,l=1}^{m_1} \int_S \chi_j e^{-\varphi_j} e^{U_j}W_l dv_g -{1\over \tau} \sum_{j=1}^{m_1}\sum_{l=m_1+1}^m \int_S \chi_j e^{-\varphi_j} e^{U_j}W_l dv_g\\
&\ \ \ -{1\over \tau} \sum_{j=m_1+1}^m\sum_{l=1}^{m_1} \int_S \chi_j e^{-\varphi_j} e^{U_j}W_l dv_g + {1\over \tau^2} \sum_{j,l=m_1+1}^m \int_S \chi_j e^{-\varphi_j} e^{U_j}W_l dv_g
\end{split}
\end{equation*}
in view of $\int_S W dv_g=0$. Since by (\ref{green}) and (\ref{ePu})
\begin{eqnarray} \label{tricky}
\int_S \chi_j e^{-\varphi_j} e^{U_j} G(x,\xi_l) dv_g= \int_S (-\Delta_g PU_j) G(x,\xi_l) dv_g= PU_j(\xi_l)
\end{eqnarray}
for all $j,l=1,\dots,m$, by Lemmata \ref{ewfxi}, \ref{ieuf}, (\ref{tricky}) and computations done in the proof of \cite[Theorem 3.2]{EF}, we have that for $l=j$
\begin{eqnarray*}
\int_S \chi_j e^{-\varphi_j}e^{U_j}W_j dv_g=-16\pi -32 \pi \log \de_j +64\pi^2 H(\xi_j,\xi_j)+16\pi \alpha_{\de_j,\xi_j}-32\pi \delta_j^2 F_{\xi_j}(\xi_j)+O(\de^4 |\log \delta|^2).
\end{eqnarray*}
Similarly, by Lemmata \ref{ewfxi}, \ref{ieuf}  and (\ref{tricky}) we have that for $l \not= j$
\begin{eqnarray*}
\int_S \chi_j e^{-\varphi_j}e^{U_j}W_l dv_g
&=& 64\pi^2 G(\xi_l,\xi_j) +8\pi (\alpha_{\de_j,\xi_j}+\alpha_{\de_l,\xi_l})-16\pi (\delta_j^2 F_{\xi_j}(\xi_l)+ \delta_l^2 F_{\xi_l}(\xi_j) \\&&+O(\de^4 |\log \delta|^2).
\end{eqnarray*}
Setting $\ds\alpha_{1,\de,\xi}=\sum_{j=1}^{m_1} \alpha_{\de_j,\xi_j}$, $\ds \alpha_{2,\de,\xi}=\sum_{j=m_1+1}^{m} \alpha_{\de_j,\xi_j},$ $\ds F_{1,\de,\xi}(x)=\sum_{j=1}^{m_1} \delta_j^2
F_{\xi_j}(x)$ and $\ds F_{2,\de,\xi}(x)=\sum_{j=m_1+1}^{m} \delta_j^2
F_{\xi_j}(x)$, we find that
\begin{eqnarray*}
&&\sum_{j,l=1}^{m_1} \int_S \chi_j e^{-\varphi_j} e^{U_j}W_l dv_g=-16\pi m_1 + \sum_{j=1}^{m_1}\Big[ -32\pi \log(\mu_1 \de) - 16\pi \log V_1(\xi_j) - 64\pi^2 H(\xi_j,\xi_j) \\
&&- 64\pi^2\sum_{i=1\atop i\ne j}^{m_1} G(\xi_j,\xi_i) + {128\pi^2\over \tau} \sum_{i=m_1+1}^m G(\xi_j,\xi_i)\Big]   + 16\pi m_1\al_{1,\de,\xi} - 32\pi \sum_{j=1}^{m_1} F_{1,\de,\xi}(\xi_j) + O(\de^4|\log\de|^2),
\end{eqnarray*}
\begin{eqnarray*}
\sum_{j=1}^{m_1}\sum_{l=m_1+1}^m \int_S \chi_j e^{-\varphi_j} e^{U_j}W_l dv_g & = & 64\pi^2 \sum_{j=1}^{m_1} \sum_{l=m_1+1}^{m} G(\xi_j,\xi_l)  + 8\pi m_2 \al_{1,\de,\xi} + 8\pi m_1 \al_{2,\de,\xi}\\
&& -16\pi \sum_{j=m_1+1}^{m} F_{1,\de,\xi}(\xi_j) -16\pi \sum_{j=1}^{m_1} F_{2,\de,\xi}(\xi_j) + O(\de^4|\log\de|^2),
\end{eqnarray*}
\begin{eqnarray*}
\sum_{j=m_1+1}^{m}\sum_{l=1}^{m_1} \int_S \chi_j e^{-\varphi_j} e^{U_j}W_l dv_g & = & 64\pi^2 \sum_{j=m_1+1}^{m} \sum_{l=1}^{m_1} G(\xi_j,\xi_l)  + 8\pi m_2 \al_{1,\de,\xi} + 8\pi m_1 \al_{2,\de,\xi}\\
& & -16\pi \sum_{j=m_1+1}^{m} F_{1,\de,\xi}(\xi_j) -16\pi \sum_{j=1}^{m_1} F_{2,\de,\xi}(\xi_j) + O(\de^4|\log\de|^2),
\end{eqnarray*}
and
\begin{eqnarray*}
&&\hspace{-0.6cm}\sum_{j,l=m_1+1}^{m} \int_S \chi_j e^{-\varphi_j} e^{U_j}W_l dv_g=-16\pi m_2 + \sum_{j=m_1+1}^{m}\Big[ -32\pi \log(\mu_2 \de) - 16\pi \log V_2(\xi_j) - 64\pi^2 H(\xi_j,\xi_j) \\
&&\hspace{-0.6cm} - 128\pi^2\tau \sum_{i=1}^{m_1} G(\xi_j,\xi_i) - 64\pi^2 \sum_{i=m_1+1\atop i\ne j}^m G(\xi_j,\xi_i)\Big]   + 16\pi m_2\al_{2,\de,\xi} - 32\pi \sum_{j=m_1+1}^{m} F_{2,\de,\xi}(\xi_j) + O(\de^4|\log\de|^2)
\end{eqnarray*}
in view of (\ref{repla0}). 
Now, setting $\ds\alpha_{\de,\xi}=\alpha_{1,\de_j,\xi_j} - {1\over\tau} \al_{2,\de,\xi}$ and $\ds F_{\de,\xi}(x) = F_{1,\de,\xi}(x) - {1\over \tau} F_{2,\de,\xi}(x),$ summing up the four previous expansions, for the gradient term we get that
\begin{eqnarray}\label{gtJW}
{1\over 2}\int_S  |\grad W|_g^2 dv_g & = & -\, 8 \pi \lf(m_1+{m_2\over \tau^2 }\rg) -16 \pi \lf(m_1 \log(\mu_1\de)+{m_2\over \tau^2}\log(\mu_2\de) \rg) - 32 \pi^2 \varphi_m^*(\xi)\\
&&+\, 8 \pi \lf(m_1 - {m_2\over \tau}\rg) \alpha_{\de,\xi} -16 \pi \sum_{j=1}^{m _1}F_{\delta,\xi}(\xi_j) + {16 \pi\over \tau} \sum_{j=m_1+1}^{m}F_{\delta,\xi}(\xi_j) +o(\de^2)\nonumber
\end{eqnarray}
in view of \eqref{fim}.

\medskip \noindent Let us now expand the potential terms in $J_{\lambda_1,\la_2}(W)$, similarly to the proof of \cite[Theorem 3.2]{EF}. By Lemma \ref{ewfxi} for any $j=1,\dots,m_1$ we find that
\begin{eqnarray*}
\int_{B_{r_0}(\xi_j)}V_1 e^W dv_g 
&=&{e^{\al_{\de,\xi}} \over 8\de_j^2}\left[\int_S \chi_j e^{U_j} \rho_j  e^{ -2 F_{\delta,\xi}} dv_g
-8\delta_j^2 \int_{A_{2r_0}(\xi_j)} \frac{\chi_j \rho_j}{|y_{\xi_j}(x)|^4} dv_g+O(\delta^4 |\log \delta|)\right].
\end{eqnarray*}
By Lemma \ref{ieuf} (with $f(x)=e^{\varphi_j}\rho_j e^{\alpha_{\delta,\xi}-2F_{\delta,\xi}}$) we can now deduce that
\begin{eqnarray*}
&& 8 \delta_j^2 e^{-\al_{\de,\xi}}  \int_{B_{r_0}(\xi_j)}V_1 e^W dv_g = 8\pi \rho_j(\xi_j)  e^{ -2F_{\delta,\xi}(\xi_j)}
-4\pi   \left(\Delta_g  \rho_j (\xi_j)-2 K(\xi_j)\rho_j(\xi_j)\right) \delta_j^2 \log \delta_j \\
&&-2 \left(\Delta_g  \rho_j (\xi_j)-2 K(\xi_j)\rho_j(\xi_j)\right) \left( \int_{\mathbb{R}^2} {\chi'(|y|) \log |y|\over |y| } dy +\pi \right) \de_j^2
+4 \de_j^2 \rho_j(\xi_j) \int_{\mathbb{R}^2} {\chi'(|y|) \over
|y|^3 } dy\\
&&+8\de_j^2\int_{B_{r_0}(\xi_j)} \left[ V_1e^{8\pi \sum\limits_{j=1}^m G(x,\xi_j) - {8\pi\over\tau} \sum\limits_{i=m_1+1}^m G(x,\xi_i) } - e^{-\varphi_j} \frac{ P_2(e^{\varphi_j}\rho_j)}{|y_{\xi_j}(x)|^4}\right]dv_g\\
&&-8\de_j^2\int_{A_{2r_0}(\xi_j)}  \chi_j e^{-\varphi_j} \frac{ P_2(e^{\varphi_j}\rho_j)}{|y_{\xi_j}(x)|^4}\,dv_g+o(\delta^2)
\end{eqnarray*}
in view of $\frac{\rho_j(x)}{|y_{\xi_j}(x)|^4}=V_1e^{8\pi \sum\limits_{j=1}^{m_1} G(x,\xi_j) - {8\pi\over\tau} \sum\limits_{i=m_1+1}^m G(x,\xi_i)}$ in $B_{r_0}(\xi_j)$ and by (\ref{equationvarphi})
\begin{eqnarray} \label{gaussian}
\Delta_g  \left[e^{\varphi_j} \rho_j \right](\xi_j)=
\Delta_g  \rho_j (\xi_j)-2 K(\xi_j)\rho_j(\xi_j).
\end{eqnarray}
Now, by Lemma \ref{ewfxi} for  any $j=m_1+1,\dots,m$ we  find that
\begin{eqnarray*}
\int_{B_{r_0}(\xi_j)}V_1 e^W dv_g &=&\int_{B_{r_0} (\xi_j)} V_1\Big[ {\rho_j\over V_2} \Big]^{-1/\tau} e^{-{1\over\tau}[U_j-\log(8\de_j^2)] +\al_{\de,\xi} + O(\de^2)} dv_g\\
&=& e^{\al_{\de,\xi}} \bigg[ \int_{B_{r_0} (\xi_j)} V_1 e^{ 8\pi \sum\limits_{j=1}^{m_1}
G(x,\xi_j) - {8\pi\over\tau} \sum\limits_{i=m_1+1}^m G(x,\xi_i) } dv_g + O(\de^2 )\bigg].
\end{eqnarray*}
On the other hand, we have that
\begin{equation*}
\int_{S \sm \cup_{j=1}^m B_{r_0}(\xi_j)}
V_1 e^W dv_g= e^{\al_{\de,\xi}} \bigg[\int_{S \sm \cup_{j=1}^m B_{r_0}(\xi_j)} V_1 e^{ 8\pi \sum\limits_{j=1}^{m_1} G(x,\xi_j) - {8\pi\over\tau} \sum\limits_{i=m_1+1}^m G(x,\xi_i)}dv_g+O(\de^2 )\bigg].
\end{equation*}
Since
\begin{equation}\label{s1m1em2f}
\sum_{j=1}^{m_1} e^{-2 F_{\delta,\xi}(\xi_j)}=m_1-2 \sum_{j=1}^{m_1}
F_{\delta,\xi}(\xi_j)+O(\delta^4)
\end{equation}
and by (\ref{repla0}) there holds $\ds \delta_j^2 \log \delta_j=\rho_j(\xi_j) \mu_i^2 \delta^2 \log \delta + \rho_j(\xi_j)\mu_i^2\de^2\log\mu_i +\frac{1}{2}\rho_j(\xi_j) \log \rho_j(\xi_j) \mu_i^2 \delta^2,$
we then obtain that
\begin{eqnarray}
\frac{1}{\pi}e^{-\alpha_{\delta,\xi}}\mu_1^2\delta^2 \int_{S}V_1 e^W
dv_g=m_1
-   \frac{A_1^*(\xi)}{8\pi} \mu_1^2\delta^2 \log(\mu_1 \delta)  +\frac{B_{1,\chi}(\xi)}{8\pi} \mu_1^2 \delta^2-2 \sum_{j=1}^{m_1}
F_{\delta,\xi}(\xi_j)+o(\delta^2),\label{intV1eW}
\end{eqnarray}
where
\begin{eqnarray*}
B_{1,\chi}(\xi)&=&-2\pi \sum_{j=1}^{m_1} [\Delta_g  \rho_j(\xi_j) -2 K(\xi_j) \rho_j(\xi_j)] \log \rho_j(\xi_j) \nonumber \\
&&-\frac{A_1^*(\xi) }{2 \pi} \bigg( \int_{\mathbb{R}^2} {\chi'(|y|) \log |y|\over |y| } dy +\pi \bigg) +4 \int_{\mathbb{R}^2} {\chi'(|y|) \over |y|^3 } dy
\sum_{j=1}^{m_1} \rho_j(\xi_j)\\
&& +8 \int_S \left[V_1e^{8\pi \sum\limits_{j=1}^{m_1} G(x,\xi_j)-{8\pi\over \tau}\sum\limits_{l=m_1+1}^{m}G(x,\xi_l)}-\sum_{j=1}^{m_1}\chi_j e^{-\varphi_j} \frac{
P_2(e^{\varphi_j}\rho_j)}{|y_{\xi_j }(x)|^4}\right]
dv_g, \nonumber
\end{eqnarray*}
By integration by parts on integrals involving $\chi$ and the splitting of $S$ as the union of $\cup_{j=1}^{m_1} B_r(\xi_j)$ and $S \setminus \cup_{j=1}^{m_1} B_r(\xi_j)$, $r\leq r_0$, we easily deduce that
\begin{eqnarray*}
B_{1,\chi}(\xi)&=& -2\pi \sum_{j=1}^{m_1} [\Delta_g  \rho_j(\xi_j) -2 K(\xi_j) \rho_j(\xi_j)] \log \rho_j(\xi_j) -\frac{A_1^*(\xi)}{2}\\
&&+ 8 \int_{S \setminus \cup_{j=1}^{m_1} B_r(\xi_j)}   V_1e^{8\pi \sum\limits_{j=1}^m G(x,\xi_j) -{8\pi\over\tau }\sum\limits_{l=m_1+1}^m G(x,\xi_l) } dv_g-\frac{8\pi}{r^2} \sum_{j=1}^{m_1} \rho_j(\xi_j)-A_1^*(\xi) \log \frac{1}{r} \\
&&+8\sum_{j=1}^{m_1} \int_{B_r(\xi_j)}\frac{
e^{\varphi_j(x)}\rho_j(x)-P_2(e^{\varphi_j}\rho_j)(x)}{|y_{\xi_j}(x)|^4}\,e^{-\varphi_j(x)} dv_g
\end{eqnarray*}
in view of \eqref{gaussian} and the definitions of $A_1^*(\xi)$, $P_2(e^{\varphi_j}\rho_j)$. As a by-product we have that $B_{1,\chi}(\xi)$ does not depend on $\chi$ and $r \leq r_0$. Since
$$\lim_{r \to 0} \int_{B_r(\xi_j)}\frac{
e^{\varphi_j(x)}\rho_j(x)-P_2(e^{\varphi_j}\rho_j)(x)}{|y_{\xi_j}(x)|^4}\,e^{-\varphi_j(x)} dv_g=0$$
in view of $e^{\varphi_j(x)}\rho_j(x)-P_2(e^{\varphi_j}\rho_j)(x)=o(|y_{\xi_j}(x)|^2)$ as $x \to \xi_j$, we have that $B_{1,\chi}(\xi)$ coincides with $B_1^*(\xi)$ as defined in \eqref{Bk} with $k=1$.

\medskip\noindent Similar as above, by Lemmata \ref{ewfxi}, \ref{ieuf} (with $f(x)=e^{\varphi_j}\rho_j e^{-\tau \alpha_{\delta,\xi}+2\tau F_{\delta,\xi}}$),  \eqref{gaussian}, 
\begin{equation}\label{sm11me2tf}
\sum_{j=m_1+1}^{m} e^{2\tau F_{\delta,\xi}(\xi_j)}=m_2 + 2\tau \sum_{j=m_1+1}^{m} F_{\delta,\xi}(\xi_j)+O(\delta^4)
\end{equation}
and by (\ref{repla0}), we then obtain that
\begin{eqnarray}\label{intV2etW}
\frac{1}{\pi}e^{\tau \alpha_{\delta,\xi}}\mu_2^2\delta^2 \int_{S}V_2 e^{-\tau W}
dv_g &= &m_2 -   \frac{A_2^*(\xi)}{8\pi} \mu_2^2\delta^2 \log(\mu_2 \delta)  +\frac{B_{2,\chi}(\xi)}{8\pi} \mu_2^2 \delta^2 \\
&&+ 2\tau \sum_{j=m_1+1}^{m} F_{\delta,\xi}(\xi_j)+o(\delta^2)\nonumber,
\end{eqnarray}
where
\begin{eqnarray*}
B_{2,\chi}(\xi)&=& -2\pi \sum_{j=m_1+1}^m [\Delta_g  \rho_j(\xi_j) -2 K(\xi_j) \rho_j(\xi_j)] \log \rho_j(\xi_j) -\frac{A_2^*(\xi)}{2}\\
&&+ 8 \int_{S \setminus \cup_{j=m_1+1}^m B_r(\xi_j)}   V_2e^{-8\pi\tau  \sum\limits_{j=1}^m G(x,\xi_j) + 8\pi\sum\limits_{l=m_1+1}^m G(x,\xi_l)} dv_g-\frac{8\pi}{r^2} \sum_{j=m_1+1}^m \rho_j(\xi_j)\\
&&-A_2^*(\xi) \log \frac{1}{r} +8\sum_{j=m_1+1}^m \int_{B_r(\xi_j)}\frac{
e^{\varphi_j(x)}\rho_j(x)-P_2(e^{\varphi_j}\rho_j)(x)}{|y_{\xi_j}(x)|^4}\,e^{-\varphi_j(x)} dv_g,
\end{eqnarray*}
$B_{2,\chi}(\xi)$ does not depend on $\chi$ and $r \leq r_0$, and coincides with $B_2^*(\xi)$ as defined in (\ref{Bk}) with $k=2$.

\medskip \noindent Finally, from \eqref{repla1}, expansions \eqref{gtJW}, \eqref{intV1eW} and \eqref{intV2etW} and Taylor's expansion for $a\ge 1$, $\log(a+t)=\log a + {t\over a} + O(t^2)$ as $t\to 0$, we get the expansion \eqref{JUt} as $\delta \to 0$ and the proof is complete.
\end{proof}

\medskip \noindent We establish now expansion \eqref{JUt} in a $C^1$-sense in $\xi$, where the derivatives in $\xi$  are with respect to a given coordinate system. Recall we use ideas in \cite[Theorem 3.2]{EF}.
\begin{proof}[{\bf Proof (of (\ref{JUt}) in $C^1(\Xi)$):}]
We just need to expand the derivatives of $J_{\lambda_1,\la_2}(W)$ in $\xi$. Let us fix $i\in\{1,2\}$ and $j\in\{1,\dots,m\}$. We have that
$$\fr_{(\xi_j)_i}[J_{\la_1,\la_2}(W)]=-\int_S\lf[\lab W+{\la_1 V_1e^{W}\over \int_S V_1e^W dv_g} -{\la_1\tau V_2e^{-\tau W}\over
\int_S V_2e^{-\tau W} dv_g}\rg]\fr_{(\xi_j)_i}W dv_g.$$ 
Notice that as in Lemma \ref{ewfxi}, it follows that
\begin{eqnarray}\label{dxiw}
\fr_{(\xi_j)_i}W_q&=&-2{\chi_q \over
\de_q^2+|y_{\xi_q}(x)|^2}\left[\fr_{(\xi_j)_i} |y_{\xi_q}(x)|^2+\delta_q^2
\fr_{(\xi_j)_i} (\log \rho_q(\xi_q))
\right]\\
&&-4 \log |y_{\xi_q}(x)|\fr_{(\xi_j)_i}\chi_q +8\pi
\fr_{(\xi_j)_i} H(x,\xi_q)+O(\de^2|\log\de|)\nonumber
\end{eqnarray}
does hold uniformly in $S$. Hence, by using \eqref{dxiw} and expansions in the proof of (35) in $C^1(\Xi)$ in \cite[Theorem 3.2]{EF}, we deduce that
\begin{eqnarray}
-\int_S \lab W\fr_{(\xi_j)_i}W dv_g &=& \sum_{l=1}^{m_1} \int_S
\chi_l e^{-\varphi_l} e^{U_l}\fr_{(\xi_j)_i}W dv_g - {1\over \tau} \sum_{l=m_1+1}^m \int_S
\chi_l e^{-\varphi_l} e^{U_l}\fr_{(\xi_j)_i}W dv_g \nonumber \\
&=& -32 \pi^2 \fr_{(\xi_j)_i} \varphi^*_m(\xi) +O(\de^2|\log\de|)
\label{1term}
\end{eqnarray}
for $j\in\{1,\dots,m_1\}$. Similarly, for $j\in\{m_1+1,\dots,m\}$ we compute
$$-\int_S \lab W\fr_{(\xi_j)_i}W dv_g=-32 \pi^2 \fr_{(\xi_j)_i} \varphi_m(\xi) +O(\de^2|\log\de|). $$
In order to give an expansion of the second term in $\fr_{(\xi_j)_i}[J_\la(W)]$, first observe that by Lemma \ref{ewfxi} there hold
\begin{equation}\label{v1ewxij}
V_1e^W={e^{\alpha_{\de,\xi}-2 F_{\delta,\xi}(x)}\over 8\de_j^2} \rho_je^{U_j}[1+O(\de^4|\log \delta|)]\quad\text{uniformly in $B_{r_0}(\xi_j)$, $j=1\dots,m_1$}
\end{equation}
\begin{equation}\label{v1ewsmxij}
\text{$V_1e^W=O(1)$ uniformly in $S \setminus \cup_{j=1}^{m_1} B_{r_0}(\xi_j)$,}
\end{equation}
\begin{equation}\label{v2emtwxij}
V_2e^{-\tau W}={e^{-\tau \alpha_{\de,\xi} + 2\tau F_{\delta,\xi}(x)}\over 8\de_j^2} \rho_je^{U_j}[1+O(\de^4|\log \delta|)]\quad\text{uniformly in $B_{r_0}(\xi_j)$, $j=m_1+1\dots,m$}
\end{equation}
\begin{equation}\label{v2emtwsmxij}
\text{ and $V_2e^{-\tau W}=O(1)$ uniformly in $S \setminus \cup_{j=m_1+1}^{m} B_{r_0}(\xi_j)$}.
\end{equation}
So, arguing in the same way as in the proof of (35) in $C^1(\Xi)$ in \cite[Theorem 3.2]{EF} and taking into account that for $k=1,2$
$$\int_S V_ke^{(-\tau)^{k-1} W} \fr_{(\xi_j)_i}W dv_g= \sum_{l = 1}^{m_1} \int_SV_ke^{(-\tau)^{k-1} W}\fr_{(\xi_j)_i}W_l-{1\over \tau}\sum_{l=m_1+1}^m\int_SV_ke^{(-\tau)^{k-1} W} \fr_{(\xi_j)_i}W_l$$
we have that 
\begin{equation}
\int_S {V_ke^{(-\tau)^{k-1} W}\over \int_S
V_ke^{(-\tau)^{k-1} W} dv_g} \fr_{(\xi_j)_i}W dv_g=O(\de^2|\log\de|),\quad k=1,2.\label{2term}
\end{equation}
In conclusion, by (\ref{1term})-(\ref{2term}) we can write
\begin{eqnarray*} 
\fr_{(\xi_j)_i}[J_{\la_1,\la_2}(W)]= -32 \pi^2 \fr_{(\xi_j)_i} \varphi^*_m(\xi)+O(\delta^2 |\log \delta|).
\end{eqnarray*}
and the proof is complete.
\end{proof}

\medskip \noindent Finally, we address the expansions for the derivatives of $J_{\la_1,\la_2}(W)$ in $\mu$. Recall that we argue similar to the proof of (35) in $C^2(\mathbb{R})$ in \cite[Theorem 3.2]{EF}.
\begin{proof}[{\bf Proof (of (\ref{JUt}) in $C^2(\mathbb{R}^2)$):}]
We just focus on the first and second derivative of $J_{\lambda_1,\la_2}(W)$ in $\mu_i$, $i=1,2$. Since $\fr_{\mu_i}= \de \rho_l^{\frac{1}{2}}(\xi_l) \fr_{\de_l} $, $i=1$ for $l\in\{1,\dots,m_1\}$ and $i=2$ for $l\in\{m_1+1,\dots,m\}$, in view of \eqref{repla0}, arguing as in Lemma \ref{ewfxi}, it is easy to show that
\begin{eqnarray}\label{ddw}
&&\de^{-1}\rho_l^{-\frac{1}{2}}(\xi_l)  \fr_{\mu_i} W_l=- \chi_l \frac{4
\delta_l}{\delta_l^2+|y_{\xi_l}(x)|^2}+ \beta_{\delta_l,\xi_l}-4
\delta_l F_{\xi_l}+O(\delta^3 |\log
\delta|)\\
&&\de^{-2}\rho_l^{-1}(\xi_l) \fr_{\mu_i\mu_i} W_l=4\chi_l
\frac{\delta_l^2-|y_{\xi_l}(x)|^2}{(\delta_l^2+|y_{\xi_l}(x)|^2)^2}+
\gamma_{\delta_l,\xi_l}-4 F_{\xi_l}+O(\delta^2 |\log
\delta|)\label{dddw}
\end{eqnarray}
do hold uniformly in $S$, where
$$\beta_{\delta_l,\xi_l}=-{8\pi\over|S|} \delta_l \log \delta_l+{4\delta_l \over|S|}\lf(\int_{\mathbb{R}^2}
\chi(|y|) \frac{e^{\hat \varphi_\xi(y)}-1}{|y|^2}dy- \int_{\mathbb{R}^2} {\chi'(|y|) \log |y|\over
|y| } dy \rg)$$
and
$$\gamma_{\delta_l,\xi_l}=-{8\pi\over|S|} \log \delta_l+{4 \over|S|}\lf(\int_{\mathbb{R}^2} \chi(|y|) \frac{e^{\hat \varphi_\xi(y)}-1}{|y|^2}dy-2\pi -\int_{\mathbb{R}^2} {\chi'(|y|) \log |y|\over |y| } dy \rg).$$ 
Note that $\fr_{\mu_i}W_l=0$ either if $i=1$ and $l\in\{m_1+1,\dots,m\}$ or $i=2$ and $l\in \{1,\dots,m\}$. Let us stress that $\fr_{\mu_i\mu_k}W_l=0$ for all $l=1,\dots,m$ and $i\ne k$, so that $\fr_{\mu_i\mu_k}W=0$ for $i\ne k$. By Lemma \ref{ieuf} we then have that either for $i=1$, $l\in\{1,\dots,m_1\}$ or $i=2$, $l\in\{m_1+1,\dots,m\}$
$$\de^{-1}\rho_l^{-\frac{1}{2}}(\xi_l) \int_S \chi_j e^{-\varphi_j}e^{U_j} \fr_{\mu_i} W_l dv_g= - \frac{16 \pi}{\delta_j} \delta_{jl}  +8\pi
\beta_{\delta_l,\xi_l}-32 \pi  \delta_l F_{\xi_l}(\xi_j)+O(\delta^3|\log \delta|^2),$$
\begin{eqnarray}\label{ieuddmuw}
\de^{-2}\rho_l^{-1}(\xi_l)\int_S \chi_j e^{-\varphi_j}e^{U_j} \fr_{\mu_i\mu_i} W_l dv_g 
 = \frac{16 \pi}{3\delta_j^2} \delta_{jl}  +8\pi
 \gamma_{\delta_l,\xi_l}-32 \pi F_{\xi_l}(\xi_j)+O(\delta^2 |\log \delta|^2)
\end{eqnarray}
and either for $k=1$, $j\in\{1,\dots,m_1\}$ or $k=2$, $l\in\{m_1+1,\dots,m\}$
\begin{eqnarray}\label{ieududw}
&&\de^{-1} \rho_l^{-\frac{1}{2}}(\xi_l) \int_S \chi_j e^{-\varphi_j}e^{U_j} \fr_{\mu_k} U_j \fr_{\mu_i} W_l
dv_g= \frac{2}{\mu_k}\de^{-1} \rho_l^{-\frac{1}{2}}(\xi_l) \int_S \chi_j e^{-\varphi_j}e^{U_j}
\frac{|y_{\xi_j}(x)|^2-\de_j^2}{\de_j^2+|y_{\xi_j}(x)|^2} \fr_{\mu_i}
W_l dv_g\nonumber\\
&&=\frac{32 \pi}{3 \de_j^2} \de\rho_j(\xi_j)^{\frac{1}{2}}
\de_{jl}+O(\de^\gamma)
\end{eqnarray}
in view of $\int_{\mathbb{R}^2} \frac{|y|^2-1}{(1+|y|^2)^3}dy=0$, where $\delta_{jl}$ denotes the
Kronecker's symbol. Note that $\fr_{\mu_k}U_j=0$ either for $k=1$ and $j\in\{m_1+1,\dots,m\}$ or $k=2$ and $j\in\{1,\dots,m_1\}$. Since $\int_S \fr_{\mu_i} W dv_g=\int_S \fr_{\mu_i\mu_k} W dv_g=0$, we then deduce the following expansions:
\begin{eqnarray}
\int_S (-\Delta_g W)  \fr_{\mu_1} W dv_g&=& \sum_{j,l=1}^{m_1} \int_S \chi_j e^{-\varphi_j}e^{U_j} \fr_{\mu_1} W_l dv_g -{1\over\tau} \sum_{j=m_1+1}^m\sum_{l=1}^{m_1} \int_S \chi_j e^{-\varphi_j}e^{U_j} \fr_{\mu_1} W_l dv_g \label{deW} \nonumber\\
&=&- \frac{16 \pi m_1}{\mu_1}
+8\pi m_1\de \sum_{l=1}^{m_1} \rho_l^{\frac{1}{2}}(\xi_l) \beta_{\delta_l,\xi_l}-32 \pi \mu_1\de^2 \sum_{j,l=1}^{m_1}
\rho_l(\xi_l) F_{\xi_l}(\xi_j)\\
&& - {8\pi m_2\over \tau}\de \sum_{l=1}^{m_1} \rho_l^{\frac{1}{2}}(\xi_l) \beta_{\delta_l,\xi_l} + {32 \pi\over \tau} \mu_1\de^2 \sum_{j=m_1+1}^{m}\sum_{l=1}^{m_1}
\rho_l(\xi_l) F_{\xi_l}(\xi_j)+O(\delta^4|\log \de|^2), \nonumber
\end{eqnarray}
and
\begin{eqnarray}
\int_S (-\Delta_g W)  \fr_{\mu_2} W dv_g&=& -{1\over \tau} \sum_{j=1}^{m_1}\sum_{l=m_1+1}^m \int_S \chi_j e^{-\varphi_j}e^{U_j} \fr_{\mu_2} W_l dv_g + {1\over\tau^2} \sum_{j,l=m_1+1}^m \int_S \chi_j e^{-\varphi_j}e^{U_j} \fr_{\mu_2} W_l dv_g \label{de2W} \nonumber\\
&=&- \frac{16 \pi m_2}{\mu_2\tau^2} - {8\pi m_1\over \tau}\de \sum_{l=1}^{m_1} \rho_l^{\frac{1}{2}}(\xi_l) \beta_{\delta_l,\xi_l} + {32 \pi \over \tau} \mu_2\de^2 \sum_{j=1}^{m_1}\sum_{l=m_1+1}^{m}\rho_l(\xi_l) F_{\xi_l}(\xi_j)\\
&& + {8\pi m_2\over \tau^2}\de \sum_{l=m_1+1}^{m} \rho_l^{\frac{1}{2}}(\xi_l) \beta_{\delta_l,\xi_l} - {32 \pi\over \tau^2} \mu_2\de^2 \sum_{j,l=m_1+1}^{m} \rho_l(\xi_l) F_{\xi_l}(\xi_j)+O(\delta^4|\log \de|^2), \nonumber
\end{eqnarray}
as $\de \to 0$. Since by Lemma \ref{ewfxi}
there hold \eqref{v1ewxij}, \eqref{v1ewsmxij}
and $\fr_{\mu_1} W= O(\delta^2 |\log \delta|)$ uniformly in $S \setminus \cup_{j=1}^{m_1} B_{r_0}(\xi_j)$, by Lemma \ref{ieuf} we can write that
\begin{eqnarray*}
&\hspace{-0.2cm}\ds\int_S &\hspace{-0.3cm}V_1e^W \fr_{\mu_1} W dv_g = \sum_{j,l=1}^{m_1}
\int_{B_{r_0}(\xi_j)} V_1e^W \fr_{\mu_1} W_l dv_g +O(\delta^2 |\log
\delta|) \\
&=&- \sum_{j=1}^{m_1} {e^{\alpha_{\de,\xi}}\over 2 \mu_1}
\int_{B_{r_0}(\xi_j)} e^{-2 F_{\delta,\xi}(x)}
 \frac{ \rho_je^{U_j}}{\delta_j^2+|y_{\xi_j}(x)|^2}
dv_g\\
&&+\;\pi {e^{\alpha_{\de,\xi}}\over \mu_1^2\de}  \left(m_1 \sum_{l=1}^m \rho_l^{\frac{1}{2}}(\xi_l)
\beta_{\delta_l,\xi_l}-4 \sum_{j,l=1}^{m_1} \rho_l^{\frac{1}{2}}(\xi_l) \delta_l F_{\xi_l}(\xi_j) \right)+\,O(\de|\log \delta|)\\
&=&\pi {e^{\alpha_{\de,\xi}}\over \mu_1^2\de^2} \left(-\frac{2m_1}{\mu_1}+m_1\de \sum_{l=1}^{m_1}
\rho_l^{\frac{1}{2}}(\xi_l) \beta_{\delta_l,\xi_l}-{\mu_1\de^2 \over 8\pi} A_1^*(\xi)- {4\over \mu_1\tau}\sum_{j=1}^{m_1} F_{2,\de,\xi}(\xi_j) +O(\delta^{2+\gamma})\right)
\end{eqnarray*}
in view of (\ref{gaussian}) and from \eqref{s1m1em2f}
\begin{eqnarray*} 
\sum_{j=1}^{m_1} e^{-2F_{\de,\xi}(\xi_j)} 
= m_1 -2 \sum_{j,l=1}^{m_1} \de_l^2  F_{\xi_l}(\xi_j)+{2\over \tau}\sum_{j=1}^{m_1}F_{2,\de,\xi}(\xi_j)+O(\de^4).
\end{eqnarray*}
Combining with (\ref{intV1eW}) we then get that
\begin{eqnarray} \label{Merry1}
\frac{\int_S V_1e^W \fr_{\mu_1} W dv_g}{\int_S V_1e^W dv_g} &=& -\frac{2}{\mu_1}+\de \sum_{l=1}^{m_1}
\rho_l^{\frac{1}{2}}(\xi_l) \beta_{\delta_l,\xi_l}-{\de^2A_1^*(\xi) \over 8\pi m_1} [\mu_1+2\mu_1\log\mu_1]\\
&&-\frac{A_1^*(\xi)}{4 \pi m_1} \mu_1\delta^2 \log \delta+\frac{B_1^*(\xi)}{4\pi m_1} \mu_1\delta^2-\frac{4}{m_1 \mu_1}\sum_{j=1}^{m_1} F_{1,\delta,\xi}(\xi_j)+o(\de^2).\nonumber
\end{eqnarray}
Similarly as above, there hold \eqref{v2emtwxij}, \eqref{v2emtwsmxij} and
and $\fr_{\mu_1} W= O(\delta^2 |\log \delta|)$ uniformly in $S\sm\cup_{j=1}^{m_1}B_{r_0}(\xi_j)$, so that 
\begin{eqnarray*}
&&\int_S V_2e^{-\tau W} \fr_{\mu_1} W dv_g = \sum_{j,l=1}^{m_1}
\int_{B_{r_0}(\xi_j)} V_2e^{-\tau W} \fr_{\mu_1} W_l dv_g+ \sum_{j=m_1+1}^{m}\sum_{l=1}^{m_1}
\int_{B_{r_0}(\xi_j)} V_2e^{-\tau W} \fr_{\mu_1} W_l dv_g\\
&& \ \ + \: O(\delta^2 |\log
\delta|)\\
&&=\pi {e^{-\tau\alpha_{\de,\xi}}\over \mu_2^2\de^2} \left(m_2\de \sum_{l=1}^{m_1} \rho_l^{\frac{1}{2}}(\xi_l) \beta_{\delta_l,\xi_l} - {4\over \mu_1}\sum_{j=m_1+1}^{m} F_{1,\de,\xi}(\xi_j) +O(\delta^{4}|\log\de|)\right)
\end{eqnarray*}
in view of $\tau>0$, \eqref{sm11me2tf} and
\begin{eqnarray*}
\int_{B_{r_0}(\xi_j)} 
 \frac{ V_2e^{-\tau W}}{\delta_j^2+|y_{\xi_j}(x)|^2}
dv_g=O\lf(\int_{B_{r_0}(\xi_j)} (\de_j^2+|y_{\xi_j}(x)|^2)^{\tau-1}dv_g\rg)=O(1).
\end{eqnarray*}
Combining with (\ref{intV2etW}) we then get that
\begin{eqnarray} \label{Merry0}
\frac{\int_S V_2e^{-\tau W} \fr_{\mu_1} W dv_g}{\int_S V_2e^{-\tau W} dv_g} &=& \de \sum_{l=1}^{m_1}
\rho_l^{\frac{1}{2}}(\xi_l) \beta_{\delta_l,\xi_l} - \frac{4}{m_2 \mu_1}\sum_{j=1}^{m_1} F_{1,\delta,\xi}(\xi_j)+o(\de^2),
\end{eqnarray}
which yields to
\begin{eqnarray} \label{derivmu1}
\fr_{\mu_1}[J_{\la_1,\la_2}(W)]&=&
\int_S (-\Delta_g W)  \fr_{\mu_1} W dv_g-\la_1 \frac{\int_S V_1e^W \fr_{\mu_1} W dv_g}{\int_S V_1e^W dv_g} +\la_2\tau \frac{\int_S V_2e^{-\tau W} \fr_{\mu_1} W dv_g}{\int_S V_2e^{-\tau W} dv_g} \nonumber \\
&=& \frac{2(\lambda_1-8\pi m_1)}{\mu_1}+2 A_1^*(\xi) \mu_1 \delta^2 \log \delta+[A_1^*(\xi)\{\mu_1+2\mu_1\log\mu_1\}-2 B_1^*(\xi)\mu_1] \delta^2\nonumber \\
&&+o(\de^2)
\end{eqnarray}
in view of \eqref{deW}, so that we deduce the validity of (\ref{JUt}) for the first derivative in $\mu_1$. Now, for the first derivative in $\mu_2$, similarly as above we have that
\begin{eqnarray} \label{Merry}
\frac{\int_S V_1e^{W} \fr_{\mu_2} W dv_g}{\int_S V_1e^{ W} dv_g} &=& -{\de\over\tau} \sum_{l=m_1+1}^{m}
\rho_l^{\frac{1}{2}}(\xi_l) \beta_{\delta_l,\xi_l} + \frac{4 }{m_1 \mu_2\tau}\sum_{j=1}^{m_1} F_{2,\delta,\xi}(\xi_j)+o(\de^2).
\end{eqnarray}
in view of \eqref{intV1eW}, and 
\begin{eqnarray} \label{Merrymu2}
\frac{\int_S V_2e^{-\tau W} \fr_{\mu_2} W dv_g}{\int_S V_2e^{-\tau W} dv_g} &=& \frac{2}{\mu_2\tau} - {\de\over\tau} \sum_{l=m_1+1}^{m} \rho_l^{\frac{1}{2}}(\xi_l) \beta_{\delta_l,\xi_l}+{\de^2A_2^*(\xi) \over 8\pi m_2\tau} [\mu_2+2\mu_2\log\mu_2]\\
&&+\frac{A_2^*(\xi)}{4 \pi m_2\tau} \mu_2\delta^2 \log \delta - \frac{B_2^*(\xi)}{4\pi m_2\tau} \mu_2\delta^2 + \frac{4}{m_2 \mu_2\tau}\sum_{j=m_1+1}^{m} F_{2,\delta,\xi}(\xi_j)+o(\de^2).\nonumber
\end{eqnarray}
by using \eqref{sm11me2tf} and combining with (\ref{intV2etW}). Thus, by using \eqref{de2W} we conclude the validity of (\ref{JUt}) for the first derivative in $\mu_2$:
\begin{eqnarray} \label{derivmu2}
\fr_{\mu_2}[J_{\la_1,\la_2}(W)] &=& \frac{2(\lambda_2\tau^2-8\pi m_2)}{\mu_2\tau^2}+\frac{2 A_2^*(\xi)}{\tau^2} \mu_2 \delta^2 \log \delta+[A_2^*(\xi)\{\mu_2+2\mu_2\log\mu_2\}-2 B_2^*(\xi)\mu_2] \frac{\delta^2}{\tau^2} \nonumber \\
&&+o(\de^2)
\end{eqnarray}

\medskip \noindent Towards the expansion of the second derivatives in $\mu$, we proceed in a similar way to obtain \eqref{derivmu1} and \eqref{derivmu2} with the aid of the expansions \eqref{ddw} for $\fr_{\mu_i}W$ and \eqref{dddw} for $\fr_{\mu_i\mu_i} W_l$, \eqref{ieuddmuw} and \eqref{ieududw} (see also validity of expansion (35) in $C^2(\mathbb{R})$ in \cite[Theorem 3.2]{EF}). We omit the details, thus, we conclude the validity of (\ref{JUt}) also for the second derivatives in $\mu$ and the proof is complete.
\end{proof}


\section{Proof of Theorem \ref{main3}}\label{pthm3}

In this section, we shall study the existence of blowing-up solutions as $\la_1\to 8\pi m_1$ and $\la_2\tau^2\to 0$, which resembles the equation \eqref{mfe}. For simplicity, we shall denote $m_1=m$ so that our approximating solution is $\ds W(x)=\sum_{j=1}^m W_j(x)$, and we look for solutions to \eqref{mfewt} in the form $u=W+\phi$. Assumptions \eqref{repla0}-\eqref{repla1} are replaced by
\begin{equation}\label{replam20}
\begin{split}
\de_j^2=\mu^2\de^2\rho_j(\xi_j),\ \ j=1,\dots,m\qquad &\text{with}\qquad 0<\mu\le C_0 \\[0.1cm]
|\lambda_1-8\pi m |\le C \de^2|\log
\de| \qquad&\text{and}\qquad 0<\lambda_2\tau^2 \le C \de^2|\log \de|.
\end{split}
\end{equation}
Notice from similar computations above to obtain \eqref{intV2etW}, we have that
$$\int_S V_2e^{-\tau W} dv_g= e^{-\tau \al_{\de,\xi}}\lf[\int_S V_2 e^{-8\pi \tau \sum\limits_{j=1}^m G(x,\xi_j)}dv_g +O(\de^2)\rg]\ge \eta_0>0$$
for some $\eta_0>0$. By conditions \eqref{replam20} we get that
\begin{equation}\label{la2v2}
{\la_2\tau V_2 e^{-\tau W}\over \int_S V_2e^{-\tau W}} =O(\de^2|\log\de|)\quad \text{uniformly in $S$}.
\end{equation}
Hence, it follows estimate \eqref{re}. Now, denote $\ds Z=\sum_{l=1}^m Z_{0l}$ and $PZ$ its projection according to \eqref{ePZ}. By using \eqref{la2v2} and similar arguments used in the proofs of \cite[Proposition 4.1]{EF} and Proposition \ref{p2}, it follows the invertibility of $L$ in \eqref{ol} in this case (as $\la_1\to 8\pi m	$ and $\la_2\tau^2\to 0$), and we deduced the following fact. 
\begin{prop}\label{lpnlabis}
There exists $\delta_0>0$ so that for
all $0<\delta\leq \delta_0$, $\mu\in (0,C_0]$, $\xi \in \Xi$ problem
\begin{equation*} 
\left\{ \begin{array}{ll}
L(\phi)= -[R+N(\phi)] +c_0\Delta_g PZ +\displaystyle \sum_{i=1}^{2}\sum_{j=1}^m c_{ij} \Delta_g
PZ_{ij}& \text{in } S\\
\int_S \phi \Delta_g PZ dv_g=\int_S \phi \Delta_g PZ_{ij} dv_g= 0 &\forall\:
i=1,2,\, j=1,\dots,m
\end{array} \right.
\end{equation*}
admits a unique solution $\phi(\mu,\xi) \in \bar H \cap
W^{2,2}(S)$ and $c_0(\mu,\xi),\,c_{ij}(\mu,\xi) \in \R$, $i=1,2$
and $j=1,\dots,m$, where $\de_j>0$ are as in \eqref{replam20} and
$N$, $R$ are given by \eqref{nlt}, \eqref{R}, respectively. Moreover, the map
$(\mu,\xi)\mapsto (\phi(\mu,\xi),c_0(\mu,\xi),c_{ij}(\mu,\xi))$ is
twice-differentiable in $\mu$ and one-differentiable in $\xi$ with
$$\|\phi\|_\infty+{\|\fr_\mu \phi \|_\infty\over |\log\de|}+\sum_{i=1}^2 \sum_{j=1}^m{\de \|\fr_{(\xi_j)_i} \phi \|_\infty\over |\log\de|} + {\|\fr_{\mu\mu}\phi\|_\infty\over |\log\de|^2} \le C\left( \delta |\log \delta |  |\nabla \varphi_m^*(\xi)|_g+ \delta^{2-\sigma}|\log\delta|^2\right)$$
\end{prop}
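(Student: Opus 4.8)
The plan is to carry out the nonlinear Lyapunov--Schmidt reduction exactly as in \cite{EF} and in Propositions \ref{p2}--\ref{lpnlabis}, the only structural simplification being that, by \eqref{la2v2}, the term $\lambda_2\tau^2 V_2e^{-\tau W}/\int_S V_2e^{-\tau W}dv_g$ and its linearization are $O(\de^2|\log\de|)$ uniformly in $S$, so they enter every estimate below as a higher-order perturbation of the mean-field linear operator attached to the $m$ positive bubbles $W_1,\dots,W_m$. In particular there is now a single scaling parameter $\mu$, hence a single dilation projection $PZ=\sum_{l=1}^m PZ_{0l}$ together with the $2m$ translation projections $PZ_{ij}$, $i=1,2$, $j=1,\dots,m$.

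\emph{Linear theory.} First I would prove the analogue of Proposition \ref{p2}: for every $h\in C(S)$ with $\int_S h\,dv_g=0$ the problem
$$L(\phi)=h+c_0\lab PZ+\sum_{i=1}^2\sum_{j=1}^m c_{ij}\lab PZ_{ij}\ \text{ in }S,\qquad \int_S\phi\,\lab PZ\,dv_g=\int_S\phi\,\lab PZ_{ij}\,dv_g=0$$
has a unique solution with $\|\phi\|_\infty\le C|\log\de|\,\|h\|_*$, $|c_0|+\sum_{i,j}|c_{ij}|\le C\|h\|_*$, and $(\mu,\xi)$-derivative bounds as in \eqref{estd}. Writing $L$ from \eqref{ol} as $\lab\phi$ plus its $i=1$ and $i=2$ summands, the $i=2$ summand has $\|\cdot\|_*$-operator norm $O(\de^2|\log\de|)$ by \eqref{la2v2}, so after the rescaling $y=y_{\xi_j}(x)/\de_j$ near each $\xi_j$ the blow-up/compactness argument of \cite{EF} applies verbatim: the limiting operator is again $\hat L$ from Section \ref{variat}, whose bounded kernel is generated by $1$ and the $Y_i$, and the orthogonality conditions against $\lab PZ$ and the $\lab PZ_{ij}$ neutralize the corresponding modes exactly as in Proposition \ref{p2}. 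Fredholm theory then yields invertibility and the stated bounds, and smooth dependence on $(\mu,\xi)$ (twice in $\mu$, once in $\xi$) follows by differentiating the equation and using the invertibility.

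\emph{Nonlinear problem.} With $L^{-1}$ the bounded inverse just obtained, the projected problem becomes the fixed-point equation $\phi=T(\phi):=-L^{-1}\big(R+N(\phi)\big)$, where $R$, $N$ are given by \eqref{R}, \eqref{nlt} with $\de_j$ as in \eqref{replam20}. The two inputs are: (i) $\|R\|_*\le C(\de|\nabla\varphi_m^*(\xi)|_g+\de^{2-\sigma}|\log\de|)$, which is Lemma \ref{estrr0}, still valid here since \eqref{la2v2} makes the $\lambda_2\tau$-term of \eqref{R} negligible (the excerpt records exactly this); and (ii) the quadratic bounds $\|N(\phi)\|_*\le C\|\phi\|_\infty^2$ and $\|N(\phi_1)-N(\phi_2)\|_*\le C(\|\phi_1\|_\infty+\|\phi_2\|_\infty)\|\phi_1-\phi_2\|_\infty$ for $\|\phi_i\|_\infty\le1$, where again $N_2$ contributes only at order $O(\de^2|\log\de|)$. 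Hence $\|T(\phi)\|_\infty\le C|\log\de|(\|R\|_*+\|\phi\|_\infty^2)$, so $T$ maps the ball $\{\|\phi\|_\infty\le M(\de|\log\de|\,|\nabla\varphi_m^*(\xi)|_g+\de^{2-\sigma}|\log\de|^2)\}$ into itself and is a contraction there for $\de$ small and $M$ large. The Banach fixed point theorem produces the unique $\phi(\mu,\xi)$, the constants $c_0,c_{ij}$ are recovered from the orthogonality relations, and the asserted estimate for $\|\phi\|_\infty$ follows. Smoothness of $(\mu,\xi)\mapsto(\phi,c_0,c_{ij})$ and the bounds on $\fr_\mu\phi$, $\fr_{(\xi_j)_i}\phi$, $\fr_{\mu\mu}\phi$ are then obtained, exactly as in \cite[Proposition 4.1]{EF}, by differentiating the identity $\phi=-L^{-1}(R+N(\phi))$ — twice in $\mu$, once in $\xi$, the $C^1$-in-$\xi$ restriction coming from the regularity in $\xi$ of the isothermal charts $y_\xi$, the regular part $H(\cdot,\xi)$ and the cut-offs $\chi_\xi$ — and using the corresponding derivative estimates for $R$, $N$ and $L^{-1}$.

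\emph{Main obstacle.} The genuinely new point, requiring the most care, is to control the negative-bubble nonlinearity $e^{-\tau W}$ and its $\lambda_2$-multiplied linearization \emph{uniformly in} $\|\cdot\|_*$ for \emph{all} $\tau>0$: when $\tau$ is small the factor $(\de_j^2+|y_{\xi_j}(x)|^2)^{2\tau}$ decays slowly, and one must verify (as in \eqref{iV2etW2} and in the computation preceding \eqref{la2v2}) that integrals such as $\int_{B_{r_0}(\xi_j)}V_2e^{-\tau W}(\de_j^2+|y_{\xi_j}(x)|^2)^{-1}\,dv_g$ remain $O(1)$, so that \eqref{la2v2} indeed holds and the whole perturbative scheme closes. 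Everything else is a routine transcription of \cite{EF} and of Propositions \ref{p2}--\ref{lpnlabis}.
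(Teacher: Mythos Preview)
Your proposal is correct and follows essentially the same route as the paper, which itself only says that the result follows ``by using \eqref{la2v2} and similar arguments used in the proofs of \cite[Proposition 4.1]{EF} and Proposition \ref{p2}''; you have simply spelled out those arguments. One minor comment: the ``main obstacle'' you flag is not really an obstacle here, since $\tau>0$ is a \emph{fixed} parameter (no uniformity in $\tau$ is claimed), and \eqref{la2v2} follows directly from $\lambda_2\tau^2=O(\de^2|\log\de|)$ together with the uniform boundedness of $V_2e^{-\tau W}$ and the lower bound $\int_S V_2e^{-\tau W}dv_g\ge\eta_0>0$, without needing the weighted integral you mention.
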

\noindent As in the case $m_2\ge 1$, the function $[W+\phi](\mu,\xi)$ will be a true solution of \eqref{ephi} if $\mu\in[C_0^{-1},C_0]$ and $\xi\in \Xi$ are such that
$c_{0}(\mu,\xi)=c_{ij}(\mu, \xi)=0$ for all $i=1,2,$ and $j=1,\dots,m$. Similarly to Lemma \ref{cpfc0bis}, this problem is equivalent to finding critical
points of the reduced energy $E_{\lambda_1,\la_2}(\mu, \xi)= J_{\lambda_1,\la_2}\big([W+\phi](\mu,\xi)\big)$, where $J_{\lambda_1,\la_2}$ is given by \eqref{energy}. Notice that
$$\la_2\log\lf(\int_S V_2e^{-\tau W} dv_g\rg)=-\la_2\tau \al_{\de,\xi} + \la_2\log\lf(\int_S V_2 e^{-8\pi \tau \sum\limits_{j=1}^m G(x,\xi_j)}dv_g\rg) + O(\de^4|\log\de|).$$
Let us stress that $\ds\la_2\log\bigg(\int_S V_2 e^{-8\pi \tau \sum\limits_{j=1}^m G(x,\xi_j)}dv_g\bigg) $ is independent of $\mu$. Taking into account computations in the proof of \cite[Theorem 3.2]{EF} and similar ones in the proof of Theorem \ref{expansionenergy} we have that
\begin{equation*}
\begin{split}
J_{\la_1,\la_2}(W)=&\,-8\pi m -\la_1\log(\pi m) +2(\la_1-8\pi m)\log(\mu\de) -32\pi^2\varphi_m^*(\xi) +A(\xi)\mu^2\de^2\log\de\\
& + [A(\xi)\mu^2\log\mu - B(\xi) \mu^2]\de^2-\la_2\log\lf(\int_S V_2 e^{-8\pi \tau \sum\limits_{j=1}^m G(x,\xi_j)}dv_g\rg)  + o(\de^2).
\end{split}
\end{equation*}
Consequently, from estimates in Appendix B we obtain that
\begin{theo} \label{fullexpansionenergym20}
Assume \eqref{replam20}. The following expansion does
hold
\begin{eqnarray*} 
E_{\lambda_1,\la_2} (\mu,\xi) &=&-8\pi m- \lambda_1 \log (\pi m) -  2\big(\lambda_1 -8\pi m \big)\log\de  -32\pi^2 \varphi_m^*(\xi)\\
&&\,\,+ 2\big(\lambda_1 -8\pi m \big)\log\mu  + A(\xi) \mu^2\delta^2\log  \delta + \lf[A(\xi) \mu^2 \log\mu- B(\xi)\mu^2\rg] \delta^2  \nonumber \\
&&\,\,  -\la_2\log\lf(\int_S V_2 e^{-8\pi \tau \sum\limits_{j=1}^m G(x,\xi_j)}dv_g\rg) + o(\de^2) +r_{\lambda_1,\la_2}(\mu,\xi) \nonumber
\end{eqnarray*}
in $C^2(\mathbb{R})$ and $C^1(\Xi)$ as $\de\to 0^+$, where
$\varphi_m^*(\xi)$, $A(\xi)$ and $B(\xi)$  are given by \eqref{fim}, \eqref{vk} and \eqref{Bk} with $k=1$, respectively. The term $r_{\lambda_1,\la_2}(\mu,\xi)$ satisfies \eqref{rlambda} for some $C>0$ independent of $(\mu,\xi)\in (0,C_0]\times\Xi$.
\end{theo}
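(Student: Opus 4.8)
The plan is to write $E_{\la_1,\la_2}(\mu,\xi)=J_{\la_1,\la_2}(W)+\big(J_{\la_1,\la_2}(W+\phi)-J_{\la_1,\la_2}(W)\big)$, with $\phi=\phi(\mu,\xi)$ furnished by Proposition \ref{lpnlabis}, take the first summand from the expansion of $J_{\la_1,\la_2}(W)$ displayed just above, and absorb the second summand, together with its $\mu$- and $\xi$-derivatives, into $r_{\la_1,\la_2}(\mu,\xi)$. The expansion of $J_{\la_1,\la_2}(W)$ is obtained exactly as in Theorem \ref{expansionenergy}; the only structural difference in the regime $\la_2\tau^2\to0$ is that, since $\int_S V_2e^{-\tau W}\,dv_g$ stays bounded away from $0$ and $\infty$, the term $\la_2\log\int_S V_2e^{-\tau W}\,dv_g$ produces neither a $\log\de$ divergence nor a bubble contribution of order $\de^2\log\de$: it contributes only $-\la_2\tau\,\alpha_{\de,\xi}$, which recombines with the $\alpha_{\de,\xi}$-terms coming from $\frac12\int_S|\grad W|_g^2\,dv_g$ exactly as in Theorem \ref{expansionenergy} (now with $m_1=m$ and $\ml J_2=\varnothing$), plus the $\mu$-independent quantity $\la_2\log\int_S V_2e^{-8\pi\tau\sum_j G(x,\xi_j)}\,dv_g$, up to $O(\de^4|\log\de|)$. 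This gives the displayed expansion once the difference is controlled.

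For the difference I would Taylor expand $J_{\la_1,\la_2}$ at $W$ in the direction $\phi$,
$$J_{\la_1,\la_2}(W+\phi)-J_{\la_1,\la_2}(W)=DJ_{\la_1,\la_2}(W)[\phi]+\tfrac12 D^2J_{\la_1,\la_2}(W)[\phi,\phi]+O(\|\phi\|_\infty^3).$$
Since $\int_S\phi\,dv_g=0$, integration by parts and the definition \eqref{R} of $R$ give $DJ_{\la_1,\la_2}(W)[\phi]=-\int_S R\,\phi\,dv_g$, so $|DJ_{\la_1,\la_2}(W)[\phi]|\le C\|R\|_*\|\phi\|_\infty$ (the weight defining $\|\cdot\|_*$ is $L^1(S)$-bounded). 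The key point is $D^2J_{\la_1,\la_2}(W)[\phi,\phi]=-\langle L(\phi),\phi\rangle_{L^2(S)}$: since $\phi$ solves the projected problem of Proposition \ref{lpnlabis} and is $L^2$-orthogonal to $\Delta_g PZ$ and the $\Delta_g PZ_{ij}$'s, one gets $-\langle L(\phi),\phi\rangle=\int_S (R+N(\phi))\phi\,dv_g$, whence $|D^2J_{\la_1,\la_2}(W)[\phi,\phi]|\le C(\|R\|_*+\|N(\phi)\|_*)\|\phi\|_\infty\le C\|R\|_*\|\phi\|_\infty+C\|\phi\|_\infty^3$; this is what keeps the remainder at the order $\|R\|_*\|\phi\|_\infty$ rather than the weaker $\|\phi\|_\infty^2$. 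Feeding in $\|R\|_*\le C(\de|\grad\varphi^*_m(\xi)|_g+\de^{2-\sigma}|\log\de|)$ from \eqref{re} and $\|\phi\|_\infty\le C(\de|\log\de|\,|\grad\varphi^*_m(\xi)|_g+\de^{2-\sigma}|\log\de|^2)$ from Proposition \ref{lpnlabis}, and applying Young's inequality on the mixed terms together with $\de^{1-\sigma}|\log\de|\to0$, gives $|r_{\la_1,\la_2}(\mu,\xi)|\le C(\de^2|\log\de|\,|\grad\varphi^*_m(\xi)|^2_g+\de^{3-\sigma}|\log\de|^2)$, i.e.\ the first part of \eqref{rlambda}. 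The lower-order term \eqref{la2v2} is harmless, being $O(\de^2|\log\de|)$ uniformly and hence already accounted for in \eqref{re}.

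For the derivative bounds in \eqref{rlambda} I would differentiate $J_{\la_1,\la_2}(W+\phi)$ in $\mu$ and in the coordinates of $\xi$, writing $\fr\big(J_{\la_1,\la_2}(W+\phi)\big)=DJ_{\la_1,\la_2}(W+\phi)[\fr W]+DJ_{\la_1,\la_2}(W+\phi)[\fr\phi]$. On the first piece one expands $DJ_{\la_1,\la_2}(W+\phi)[\fr W]$ around $W$, again using $D^2J_{\la_1,\la_2}(W)[\phi,\cdot]=-\langle L(\phi),\cdot\rangle=\langle R+N(\phi),\cdot\rangle$ modulo the $L^2$-orthogonal $\Delta_g PZ$-terms, so that $DJ_{\la_1,\la_2}(W+\phi)[\fr W]-\fr\big(J_{\la_1,\la_2}(W)\big)$ is controlled by $\|R\|_*$ times the relevant weighted size of $\fr W$; on the second piece one uses that at $W+\phi$ the functional $DJ_{\la_1,\la_2}$ reduces to a linear combination of the $\Delta_g PZ$'s with coefficients of size $O(\|R\|_*)$, and that differentiating $\langle\phi,\Delta_g PZ\rangle=\langle\phi,\Delta_g PZ_{ij}\rangle=0$ makes $\langle\fr\phi,\Delta_g PZ\rangle$ and $\langle\fr\phi,\Delta_g PZ_{ij}\rangle$ small, combined with the bounds on $\fr_\mu\phi$, $\fr_{(\xi_j)_i}\phi$, $\fr_{\mu\mu}\phi$ in Proposition \ref{lpnlabis}. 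The $|\log\de|$, $\de|\log\de|^{-1}$, $|\log\de|^2$ weights attached to $\nabla_\mu r$, $\nabla_\xi r$, $D^2_\mu r$ in \eqref{rlambda} are precisely those produced by the weighted sizes of $\fr_\mu W$, $\fr_{(\xi_j)_i}W$, $\fr_{\mu\mu}W$ and the logarithmic losses in Proposition \ref{lpnlabis}. All of this is the verbatim specialization to $m_2=0$ of the arguments of Appendix B underlying Theorem \ref{fullexpansionenergy}. The main obstacle is not the $C^0$ bound but the bookkeeping in the $C^1$-in-$\xi$ and $C^2$-in-$\mu$ estimates: one must differentiate the fixed-point construction of $\phi$ and carefully pair the resulting terms against $\fr W$, tracking the $|\log\de|$ factors that yield exactly the weights in \eqref{rlambda}.
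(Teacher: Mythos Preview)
Your proposal is correct and follows essentially the same route as the paper: the paper also writes $E_{\la_1,\la_2}=J_{\la_1,\la_2}(W)+[J_{\la_1,\la_2}(W+\phi)-J_{\la_1,\la_2}(W)]$, takes the expansion of $J_{\la_1,\la_2}(W)$ displayed just before the theorem, and bounds the difference via the identity $J(W+\phi)-J(W)=-\tfrac12\int_S[R\phi-N(\phi)\phi]\,dv_g+\int_0^1\!\!\int_0^1[D^2J(W+ts\phi)-D^2J(W)][\phi,\phi]\,t\,ds\,dt$, which is exactly your Taylor expansion combined with your key observation that the orthogonality of $\phi$ to $\Delta_g PZ,\Delta_g PZ_{ij}$ in the projected equation yields $D^2J(W)[\phi,\phi]=\int_S(R+N(\phi))\phi\,dv_g$. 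The derivative bounds are obtained in the paper by differentiating this identity directly, leading to the same estimates you describe; in short, this is the verbatim specialization to $m_2=0$ of the proof of Theorem \ref{fullexpansionenergy} in Appendix~B, as you note.
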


\begin{proof}[{\bf Proof (of Theorem \ref{main3}):}] We argue in the same way as in the proof of Theorem \ref{main2} with $k=1$.
\end{proof}

\section{Appendix A}\label{appeA}
\noindent We shall argue in the same way as in Appendix A in \cite{EF}. We first address a-priori estimates for the operator $L$ when all the $c_{ij}$'s vanish:
\begin{prop} \label{p1}
There exists $\delta_0>0$ and $C>0$ so that, for all $0<\delta\leq
\delta_0$, $h\in C(S)$ with $\int_Sh dv_g=0$, $\xi \in \Xi$ and
$\phi \in H_0^1(S) \cap W^{2,2}(S)$ a solution of \eqref{plco}
with $c_{0i}=c_{ij}=0$, $i=1,2$ and $j=1,\dots,m$,
one has
\begin{equation*} 
\|\phi \|_\infty \le C | \log \de | \|h\|_*.
\end{equation*}
\end{prop}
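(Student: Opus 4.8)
The statement is the standard $L^\infty$ a priori estimate for the linearized operator at a multi-bubble ansatz, and the proof is by contradiction using the scaling/blow-up argument that goes back to work on Liouville-type equations (Chen--Lin, del Pino--Kowalczyk--Musso, and here the reference \cite{EF}). The plan is to suppose the estimate fails: there are sequences $\delta_n\to 0$, $\xi^{(n)}\in\Xi$, functions $h_n$ with $\|h_n\|_*\to 0$ and $\int_S h_n\,dv_g=0$, and solutions $\phi_n$ of \eqref{plco} with all $c_{0i}=c_{ij}=0$ normalized so that $\|\phi_n\|_\infty = |\log\delta_n|$ (one can always renormalize so that $\|\phi_n\|_\infty=|\log\delta_n|$ while $\|h_n\|_*\to 0$, or equivalently assume $\|\phi_n\|_\infty/(|\log\delta_n|\,\|h_n\|_*)\to\infty$). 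The goal is then to extract, after rescaling around each concentration point $\xi_j^{(n)}$, a limit profile that is a bounded solution of the limiting equation $\widehat L(\psi)=0$ on $\R^2$, deduce it lies in the kernel spanned by $1$ and the $Y_i$'s, and then show the orthogonality conditions in \eqref{plco} plus the zero-average condition force the limit to vanish — contradicting a lower bound on $\|\phi_n\|_\infty$ recovered from elliptic estimates.

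\textbf{Key steps.} First I would establish the \emph{inner estimate}: rescale $\psi_{n,j}(y) := \phi_n(y_{\xi_j^{(n)}}^{-1}(\delta_{j,n} y))/\|\phi_n\|_\infty$ on expanding balls $|y|\le r_0/\delta_{j,n}$. Using that $e^{(-\tau)^{i-1}W}$ concentrated around $\xi_j$ behaves, after rescaling, like $8/(1+|y|^2)^2$ times $1/\pi$-weighted mass factors, and that $\|h_n\|_*\to 0$ kills the right-hand side in rescaled form, standard elliptic theory gives local $C^{1,\alpha}_{\mathrm{loc}}$ convergence $\psi_{n,j}\to\psi_j$ with $\widehat L(\psi_j)=0$ and $|\psi_j|\le 1$; hence $\psi_j = b_0 + \sum_{i=1}^2 b_i Y_i$ for constants. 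Second, the orthogonality conditions $\int_S\phi_n\,\Delta_g PZ_{ij}\,dv_g=0$ and $\int_S\phi_n\,\Delta_g PZ_i\,dv_g=0$, after passing to the limit (using the asymptotics of $PZ_{ij},PZ_i$ and that $\Delta_g PZ_{ij}$ is, modulo small error, $\chi_j\Delta_g Z_{ij}$ which rescales to $\Delta Y_i$ with the Liouville weight), force $b_1=b_2=0$ for each $j$ and also $b_0=0$ once one accounts for the $\Delta_g PZ_i = \sum_{l} \Delta_g PZ_{0l}$ (the ``$Y_0$''-type) direction — this is exactly the role of the extra parameters $\mu_1,\mu_2$ emphasized in Section~\ref{variat}, namely that $1$ \emph{is} in the kernel of $\widehat L$ and must be projected out too. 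So $\psi_j\equiv 0$ for all $j$, i.e. $\phi_n\to 0$ after rescaling near each bubble. Third, the \emph{outer estimate}: away from the bubbles the coefficient $\lambda_i\tau^{2(i-1)} V_i e^{(-\tau)^{i-1}W}/\int_S V_i e^{(-\tau)^{i-1}W}$ is $O(\delta_n^2)$ uniformly (this is shown in the proof of Lemma~\ref{estrr0}), so $L$ is a small perturbation of $\Delta_g$; writing $\phi_n = \psi_n + $ (harmonic-type correction) and using the representation via the Green function together with the definition of $\|\cdot\|_*$, one bounds $\|\phi_n\|_{L^\infty(S\setminus\cup B_{r_0/2}(\xi_j))}$ by $C(\|h_n\|_* + o(1)\|\phi_n\|_\infty)$; combined with a maximum-principle / potential-estimate comparison on the annular regions $A_{r_0}(\xi_j)$ this upgrades to a global bound $\|\phi_n\|_\infty \le C(\|h_n\|_* + o(1)\|\phi_n\|_\infty)$, where the factor $|\log\delta_n|$ enters precisely because the kernel element $1$ forces a logarithmically large correction when matching inner and outer expansions. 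Dividing by $\|\phi_n\|_\infty$ and letting $n\to\infty$ gives $1\le o(1)$, the desired contradiction.

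\textbf{Main obstacle.} The delicate point is Step~2 together with the $|\log\delta|$ loss: one must simultaneously (i) correctly identify the limit of the orthogonality integrals $\int_S \phi_n\,\Delta_g PZ_i\,dv_g$, which mix contributions from \emph{all} bubbles of the same sign (since $Z_i=\sum_l Z_{0l}$), and show this still annihilates the $b_0$-component of every limit profile; and (ii) track how the non-trivial kernel direction $1$ propagates through the matching of inner and outer regions to produce exactly the bound $\|\phi\|_\infty\le C|\log\delta|\,\|h\|_*$ rather than $C\|h\|_*$. Following \cite[Appendix~A]{EF} line by line — with the only new bookkeeping being the two families $\mathcal J_1,\mathcal J_2$, the weights $\tau^{2(i-1)}$, and the $-\tfrac1\tau$ coefficient in $W$ — this can be carried out; the potential-theoretic estimates on $S$ are identical to those there, and the concentration estimates needed are already contained in the proof of Lemma~\ref{estrr0} above. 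I would therefore present the proof as: set up the contradiction sequence, invoke the inner blow-up analysis to get the kernel decomposition, use the orthogonality and zero-average conditions to kill the limit, and close with the outer estimate and the $|\log\delta|$-weighted matching, citing \cite{EF} for the parts that are verbatim.
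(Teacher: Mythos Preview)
Your overall strategy---contradiction, rescaling near each bubble, identifying the limit as a kernel element, then an outer/maximum-principle argument---is the same as the paper's. But two concrete points in your Step~2 would not go through as written.

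First, the kernel. After rescaling around $\xi_j$ the limiting equation is not $\hat L(\psi)=0$ with the nonlocal term; the paper absorbs the nonlocal average by working with $\psi_i=\phi+\tilde c_i(\phi)$, $\tilde c_i(\phi)=-\frac{\int_S V_i e^{(-\tau)^{i-1}W}\phi}{\int_S V_i e^{(-\tau)^{i-1}W}}$, so that the rescaled limit $\Psi_{j,\infty}$ solves $\Delta\Psi+\frac{8}{(1+|y|^2)^2}\Psi=0$ and hence $\Psi_{j,\infty}=\sum_{i=0}^2 a_{ij}Y_i$. Your decomposition $b_0+\sum_{i=1}^2 b_i Y_i$ omits $Y_0$ and introduces a free constant that is actually carried separately as $\tilde c_{i,0}$.

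Second, and more importantly, the orthogonality conditions alone do \emph{not} kill all the $Y_0$-coefficients. Testing against $PZ_{ij}$ gives $a_{1j}=a_{2j}=0$; testing against $PZ_1,PZ_2$ only yields $\sum_{j\in\mathcal J_k}a_{0j}=0$, $k=1,2$, since $\int_{\R^2}\frac{1-|y|^2}{(1+|y|^2)^3}dy=0$. To obtain $a_{0j}=0$ for each $j$ individually the paper introduces an auxiliary test function $P\hat Z_j$, the projection of $\hat Z_j(x)=\beta_j(y_{\xi_j}(x)/\delta_j)$ with $\beta_j(y)=\frac{4}{3}[2\log\delta_j+\log(1+|y|^2)]\frac{1-|y|^2}{1+|y|^2}+\frac{8}{3}\frac{1}{1+|y|^2}$, which satisfies $e^{\varphi_j}\Delta_g\hat Z_j+e^{U_j}\hat Z_j=e^{U_j}Z_{0j}$; testing the equation against $P\hat Z_j$ then forces $\int_{\R^2}\Psi_{j,\infty}\frac{1-|y|^2}{(1+|y|^2)^3}dy=0$, i.e. $a_{0j}=0$. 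The constants $\tilde c_{i,0}$ are shown to vanish separately via the Green representation formula combined with $\sum_{j\in\mathcal J_k}a_{0j}=0$. Without this $P\hat Z_j$ step (which is precisely where the $|\log\delta|$ factor is absorbed, since $\hat Z_j=O(|\log\delta|)$), your argument stalls at $\sum a_{0j}=0$ and cannot conclude $\Psi_{j,\infty}\equiv 0$.
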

\begin{proof}[\dem] By contradiction, assume the existence of sequences $\de \to 0$, $\mu=(\mu_1,\mu_2)$ with $\mu\to\mu^*$, points $\xi \in \Xi$ with $\xi \to \xi^*$, functions $h$ with $|\log \de| \|h\|_*=o(1)$ and solutions $\phi$ with $\|\phi\|_\infty=1$. Recall that $\de_j^2=\mu_i\de^2 \rho_j(\xi_j)$. Setting $\mathcal{K}_i=\frac{\la_i\tau^{2(i-1)} V_ie^{(-\tau )^{i-1} W}}{\int_S V_ie^{(-\tau)^{i-1} W}dv_g}$, $\psi_i=\phi + \tilde c_i(\phi)$, $\tilde c_i(\phi)=-\frac{\int_S V_ie^{(-\tau)^{i-1} W}\phi dv_g}{\int_S V_ie^{(-\tau)^{i-1} W}dv_g}$ for $i=1,2$, we have that $\psi_1-\tilde c_1(\phi) = \psi_2 - \tilde c_2(\phi)$, $\Delta_g \psi_1+\mathcal{K}_1 \psi_1 + \ml{K}_2[\psi_1-\tilde c_1(\phi)+\tilde c_2(\phi)]=h$ and $\Delta_g \psi_2+\mathcal{K}_1 [\psi_2-\tilde c_2(\phi)+\tilde c_1(\phi)] + \ml{K}_2\psi_2=h$ in $S$ and $\psi_i$, $i=1,2$ does satisfy the same orthogonality conditions as $\phi$.

\medskip \noindent Since $\|\psi_{i,n}\|_\infty\le 2\|\phi_n\|_\infty \le 2$ and $\Delta_g\psi_i =o(1)$ in $C_{\hbox{loc}}(S \setminus \{\xi_1^*,\dots, \xi_m^*\})$, we can assume that $\psi_{i,n} \to \psi_{i,\infty}$ in $C^1_{\hbox{loc}}(S \sm\{\xi_1^*,\dots,\xi_m^*\})$. Since $\psi_{i,\infty}$ is bounded, it extends to an harmonic function in $S$, and then $\psi_{i,\infty}=\tilde c_{i,0}:= -\lim \frac{\int_S V_i e^{(-\tau)^{i-1} W} \phi dv_g}{\int_S V_i e^{(-\tau)^{i-1} W} dv_g}$ in view of ${1\over |S|}\int_S \psi_{i,n} dv_g=\tilde c_{i,n}(\phi)$.

\medskip \noindent The function $\Psi_{i,j} =\psi_i(y_{\xi_j}^{-1}(\delta_j y))$ $i=1$, for $j=1,\dots,m_1$ and $i=2$ for $j=m_1+1,\dots ,m$  satisfy $\lap \Psi_{1,j} + \mathcal{\tilde K}_{1,j} \Psi_{1,j} + \mathcal{\tilde K}_{2,j}[\Psi_{1,j}-\tilde c_1+\tilde c_2]= \tilde h_j$ and $\lap \Psi_{2,j} + \mathcal{\tilde K}_{1,j}[\Psi_{2,j}-\tilde c_2+\tilde c_1] + \mathcal{\tilde K}_{2,j} \Psi_{2,j} = \tilde h_j$ in $B_{2r_0 \over \de_j}(0)$, where $\mathcal{\tilde K}_{i,j}=\de_j^2 \mathcal{K}_i(y_{\xi_j}^{-1}(\de_j y))$ and $\tilde
h_j(y)=\de_j^2 h (y_{\xi_j}^{-1}(\de_j y))$. Since $|\ti h_j| \le C\|h \|_*$,
$$\mathcal{\tilde K}_{1,j}=
\begin{cases}
{8\over (1+|y|^2)^2}(1+O(\delta^2|\log \delta|))&\text{ for $j=1,\dots,m_1$}\\
O(\de_j^2)&\text{ for $j=m_1+1,\dots,m$ }
\end{cases}$$
and
$$\mathcal{\tilde K}_{2,j}=
\begin{cases}
O(\de_j^2)&\text{ for $j=1,\dots,m_1$}\\
{8\over (1+|y|^2)^2}(1+O(\delta^2|\log \delta|))&\text{ for $j=m_1+1,\dots,m$ }
\end{cases}$$ 
uniformly in $B_{\frac{2r_0}{\delta}}(0)$, in view of Lemma \ref{ewfxi}, \eqref{intV1eW} and (\ref{intV2etW}), up to a sub-sequence, by elliptic estimates $\Psi_{i,j} \to \Psi_{j,\infty}$ with $i=1$ if $j=1,\dots,m_1$ and $i=2$ if $j=m_1+1,\dots,m$ in $C^1_{\hbox{loc}}(\mathbb{R}^2)$, where $\Psi_{j,\infty}$ is a bounded solution of $\Delta \Psi_{j,\infty} + {8\over(1+|y|^2)^2}\Psi_{j,\infty}= 0$ of the form $\Psi_{j,\infty}=\displaystyle \sum_{i=0}^2 a_{ij}Y_i$ (see for example \cite{bp}). Since $-\Delta_g PZ_{ij} =\chi_j e^{-\varphi_j} e^{U_j} Z_{ij}-\frac{1}{|S|}\int_S
\chi_j  e^{-\varphi_j} e^{U_j} Z_{ij} dv_g$ in view of (\ref{ePZ}) and $\Delta_g=e^{-\varphi_j} \Delta$ in $B_{2r_0}(\xi_j)$ through $y_{\xi_j}$, we have that
$$0=-\int_S \psi_l \Delta_g PZ_{ij}=32 \int_{\mathbb{R}^2} \Psi_{l,j} \frac{y_i}{(1+|y|^2)^3} dy
-\frac{32}{|S|} \int_{\mathbb{R}^2} \frac{y_i}{(1+|y|^2)^3} dy \int_S \psi_{l,n} +O(\de^3),$$
with $l=1$ if $j=1,\dots,m_1$ and $l=2$ if $j=m_1+1,\dots,m$. Since then $\int_{\mathbb{R}^2} \Psi_{j,\infty} \frac{y_i}{(1+|y|^2)^3} dy=0$, we deduce that $a_{1j}=a_{2j}=0$. By the orthogonality condition $\int_S \phi \Delta_g PZ_1=0$, similarly we deduce that
\begin{eqnarray*}
0&=&-\sum_{j=1}^{m_1}\int_S \psi_1 \Delta_g PZ_{0j}dv_g \\ 
&=&16 \sum_{j=1}^{m_1}\int_{\mathbb{R}^2} \Psi_j \frac{1-|y|^2}{(1+|y|^2)^3} dy
-\frac{16}{|S|} m_1 \int_{\mathbb{R}^2} \frac{1-|y|^2}{(1+|y|^2)^3} dy \int_S \psi_{1,n} +O(\de^2),
\end{eqnarray*}
which implies $\displaystyle \sum_{j=1}^{m_1} a_{0j}=0$ in view of $\int_{\mathbb{R}^2} \frac{1-|y|^2}{(1+|y|^2)^3} dy=0$. By using the same argument, the orthogonality condition  $\int_S \phi \Delta_g PZ_2=0$ implies that $\displaystyle \sum_{j=m_1+1}^{m} a_{0j}=0$. By dominated convergence we have that
\begin{eqnarray*}
&&\int_S G(y,\xi_j) \mathcal{K}_1 \psi_1 dv_g=  -{1\over 2\pi } \log \de \int_{B_{r_0}(\xi_j)}  \mathcal{K}_1 \psi_1 dv_g+\int_{\mathbb{R}^2} \Big[-{1\over 2\pi }\log |y|+H(\xi_j,\xi_j) \Big] \frac{8}{(1+|y|^2)^2} \Psi_{j,\infty} dy\\
&&+ \sum_{i=1 \atop i\not=j}^{m_1} G(\xi_i,\xi_j)  \int_{\mathbb{R}^2} \frac{8}{(1+|y|^2)^2} \Psi_{i,\infty} dy+o(1)=
-{1\over 2\pi } \log \de \int_{B_{r_0}(\xi_j)}  \mathcal{K}_1 \psi_1 dv_g+4 a_{0j}+o(1)
\end{eqnarray*}
in view of $\int_{\mathbb{R}^2} \log |y| \frac{1-|y|^2}{(1+|y|^2)^3} dy=-\frac{\pi}{2}$ and
\begin{equation*}
\begin{split}
\int_S G(y,\xi_j) \mathcal{K}_2 \psi_2 dv_g =&\,\sum_{i=m_1+1}^{m}G(\xi_i,\xi_j) \int_{\mathbb{R}^2} \frac{8}{(1+|y|^2)^2} \Psi_{i,\infty}(y) \, dy \\
& + O\bigg(\de^2 \int_{B_{r_0}(\xi_j)} |G(y,\xi_j)|dv_g\bigg) +o(1) = o(1)
\end{split}
\end{equation*}
for $j=1,\dots,m_1$. In view of $\int_S \ml{K}_l \psi_l=0$, $l=1,2$ and
$$\bigg|\int_S G(y,\xi_j) h dv_g \bigg| \le C |\log\de | \int_S |h| dv_g+\frac{\|h\|_*}{\delta^2}\bigg|\int_{B_\de(\xi_j)} G(y,\xi_j) dv_g\bigg|\leq C' |\log \delta|\|h\|_*=o(1),$$
by the Green's representation formula
\begin{eqnarray*}
\sum_{j=1}^{m_1}  \Psi_j(0)&=& \sum_{j=1}^{m_1} \psi_1 (\xi_j) ={m_1 \over |S|}\int_S \psi_1 dv_g + \sum_{j=1}^{m_1} \int_S G(y,\xi_j) [ \mathcal{K}_1 \psi_1 + \mathcal{K}_2\psi_2- h ] dv_g\\
&=&m_1 \tilde c_{1,0} +4 \sum_{j=1}^{m_1} a_{0j}+o(1)
\end{eqnarray*}
which gives $\displaystyle 2\sum_{j=1}^{m_1} a_{0j}= m_1\tilde c_{1,0} +4 \displaystyle \sum_{j=1}^{m_1} a_{0j}$ as $n \to +\infty$. Since $\displaystyle \sum_{j=1}^{m_1} a_{0j}=0$, we get that $\tilde c_{1,0}=0$. By using a similar argument, we obtain that
$$\int_S G(y,\xi_j) \mathcal{K}_1 \psi_1 dv_g = o(1)\quad\text{for $j=1,\dots,m_1$ and}$$
$$\int_S G(y,\xi_j) \mathcal{K}_2 \psi_2 dv_g = -{1\over 2\pi } \log \de \int_{B_{r_0}(\xi_j)}  \mathcal{K}_2 \psi_2 dv_g+4 a_{0j}+o(1)$$
for $j=m_1+1,\dots,m$, so that, from the Green's representation formula for $\Psi_j(0)$ for $j=m_1+1,\dots,m$ we get that $\tilde c_{2,0}=0$.\\
Following \cite{EGP}, let $P\hat Z_j \in H_0^1(S)$ be s.t. $\Delta_g P \hat Z_j =\chi_j \Delta_g \hat Z_j -\frac{1}{|S|}\int_S
\chi_j \Delta_g \hat Z_j dv_g$ in $S$, where
$$\hat Z_j(x)=\beta_j\Big(\frac{y_{\xi_j}(x)}{\delta_j}\Big)\,,\qquad \beta_j(y)={4\over 3}[2\log \delta_j+\log (1 + |y|^2 )]\frac{1
- |y|^2}{1 + |y|^2} + {8\over 3} \frac{1}{1+ |y|^2}$$
satisfies $e^{\varphi_j}\Delta_g \hat Z_j+e^{U_j} \hat Z_j=e^{U_j}Z_{0j}$ in $B_{2r_0}(\xi_j)$. Since it is easily seen that $ P\hat Z_j= \chi_j \hat Z_j +{16 \pi \over 3}H(\cdot, \xi_j) +O(\de^2 |\log \de |^2)$ uniformly in $S$, we test the equation of $\psi_1$ against $P\hat Z_j$, $j=1,\dots,m_1$ to get:
\begin{eqnarray*}
&&\hspace{-0.7cm}o(1)=\int_S h P\hat Z_j=\int_S \psi_1 \bigg[\chi_j \Delta_g \hat Z_j -\frac{1}{|S|}\int_S
\chi_j \Delta_g \hat Z_j dv_g \bigg]dv_g + \int_S \big[\mathcal{K}_1 \psi_1 + \ml{K}_2(\psi_1-\tilde c_1+\tilde c_2)\big]  P\hat Z_j dv_g\\
&&= \int_S  \chi_j \psi_1 [ \Delta_g \hat Z_j + \mathcal{K}_1 \hat Z_j]dv_g+o(1)=\int_S \chi_j \psi e^{U_j}Z_{0j} dv_g+o(1) =16 \int_{\mathbb{R}^2} \Psi_j \frac{1-|y|^2}{(1+|y|^2)^3}dy+o(1)
\end{eqnarray*}
in view of , $\int_S \mathcal{K}_1 \psi_1 dv_g=0$, $\int_S \mathcal{K}_2 [\psi_1-\tilde c_1+\tilde c_2]P\hat Z_j dv_g=o(1)$, $\int_S \psi_1 dv_g=o(1)$, $\int_S \chi_j \Delta_g \hat Z_j dv_g=O(1)$, $\int_S \chi_j \psi_1 [\mathcal{K}_1-e^{U_j}]\hat Z_j dv_g=O(\delta^2 |\log \delta|^2)$ and $\int_S h P\hat Z_j=O(|\log \delta|\|h\|_*)=o(1)$, $j=1,\dots,m_1$. Since $\int_{\mathbb{R}^2} \Psi_j \frac{1-|y|^2}{(1+|y|^2)^3}dy=0$ we have that $a_{0j}=0$, $j=1,\dots,m_1$. Now, testing the equation of $\psi_2$ against $P\hat Z_j$, $j=m_1+1,\dots,m$, lead us to deduce that $a_{0j}=0$,$j=m_1+1,\dots,m$. So far, we have shown that $\psi_i \to 0$ in $C_{\hbox{loc}}(S\setminus \{\xi_1^*,\dots,\xi_m^*\})$ and uniformly in $\cup_{j=1}^{m} B_{R \delta_j}(\xi_j)$, for all $R>0$ for both $i=1,2$, in view of $\psi_1-\tilde c_1=\psi_2-\tilde c_2$.

\medskip \noindent Setting $\hat \psi_{i,j} (y)=\psi_i (y_{\xi_j}^{-1}(y))$, $\mathcal{\hat
K}_{j}(y)=[\mathcal{K}_1 +\mathcal{K}_2] (y_{\xi_j}^{-1}(y))$ and $\hat h_j(y)=h(y_{\xi_j}^{-1}(y))$ for $y \in B_{2r_0}(0)$, we have that $
e^{\hat \varphi_j} \Delta \hat \psi_{1,j} + \mathcal{\hat K}_j \hat \psi_{1,j}=\hat h_j + \mathcal{K}_2(y_{\xi_j}^{-1} (y)) [\tilde c_1 - \tilde c_2] $. By now it is rather standard to show that the
operator $\hat L_j=e^{\varphi_j} \Delta + \mathcal{\hat K}_j$ satisfies the maximum principle in $
B_r(0) \sm B_{R\delta_j}(0)$ for $R$ large and $r>0$ small enough, see for example \cite{DeKM}. As a consequence, we get that $\psi_1 \to 0$ in $L^\infty(S)$. Similarly, we also get that $\psi_2 \to 0$ in $L^\infty(S)$. Since $\psi_i=\phi + \tilde c_i(\phi)$ and $\tilde c_i(\phi) \to \tilde c_{i,0}=0$ along a sub-sequence, $\|\psi_i \|_\infty \to 0$ implies $\phi \to 0$ in $L^\infty(S)$, in contradiction with $\|\phi\|_\infty=1$. This completes the proof.
\end{proof}
\noindent We are now ready for
\begin{proof}[{\bf Proof (of Proposition \ref{p2}):}] Since $\|\lap_g PZ_{ij}\|_*\le C$ for all $i=0,1,2$, $j=1,\dots,m$, by Proposition \ref{p1} any solution of \equ{plco} satisfies
$$\|\phi\|_\infty\le
C |\log \de| \bigg[\|h\|_*+ \sum_{i=1}^2\Big(|c_{0i}|+\sum_{j=1}^m |c_{ij}|\Big) \bigg].$$
To estimate the values of the $c_{ij}$'s, test equation \equ{plco} against $PZ_{ij}$, $i=1,2$ and $j=1,\dots,m$:
$$\int_S \phi L(PZ_{ij})dv_g =\int_S h PZ_{ij}dv_g + \sum_{k=1}^2\Big[c_{0k}\sum_{l=0}^m \int_S  \lap_g
PZ_{0l} PZ_{ij}dv_g + \sum_{l=1}^m c_{kl} \int_S \lap_g
PZ_{kl} PZ_{ij}dv_g\Big].$$
Since for $j=1,\dots,m$ we have the following estimates in $C(S)$
\begin{equation}\label{pzij}
PZ_{ij}=\chi_jZ_{ij}+O(\de)\,,\:\:\:i=1,2\,, \qquad  PZ_{0j}=\chi_j(Z_{0j}+2)+O(\de^2|\log\de|),
\end{equation}
it readily follows that $\int_S \lap_g PZ_{kl} PZ_{ij}dv_g=-{32\pi\over 3}\delta_{ki}\delta_{lj}+O(\de)$, where the $\delta_{ij}$'s are the Kronecker's symbols. By Lemma \ref{ewfxi}, (\ref{repla0}), (\ref{intV1eW}), \eqref{intV2etW} and \eqref{pzij} we have that for $i=1,2$
$$L(PZ_{ij})=\chi_j \Delta_g Z_{ij}+e^{U_j} PZ_{ij}+O\Big(\delta^2 +\delta \sum_{k=1}^m e^{U_k}\Big)=
e^{U_j} [PZ_{ij}-e^{-\varphi_j }\chi_j
Z_{ij}]+O\Big(\delta^2+\delta \sum_{k=1}^m e^{U_k}\Big)$$ in view
of $\frac{\int_S V_1 e^W PZ_{ij}dv_g}{\int_S V_1 e^W dv_g}=O(\delta)$ and $\frac{\int_S V_2 e^{-\tau W} PZ_{ij}dv_g}{\int_S V_2 e^{-\tau W} dv_g}=O(\delta)$ for all $j=1,\dots,m$,
leading to $\|L(PZ_{ij})\|_*=O(\delta)$. Similarly, we have that
\begin{eqnarray*}
L(PZ_1) &=&\sum_{j=1}^{m_1} e^{U_j} [PZ_{0j}-\chi_j e^{-\varphi_j } Z_{0j}-
2\chi_j ]+O(\delta^2)+O\bigg(\delta \sum_{k=1}^m e^{U_k}\bigg)
\end{eqnarray*}
in view of $\frac{\int_S V_1 e^{W}  PZ_{0j}dv_g}{\int_S V_1 e^{W}   dv_g}=\frac{2}{m_1}+O(\delta^2|\log \delta|)$ and $\frac{\int_S V_2 e^{-\tau W}  PZ_{0j}dv_g}{\int_S V_2 e^{-\tau W}   dv_g}=O(\delta^2|\log \delta|)$ for $j=1,\dots,m_1$, leading to $\|L(PZ_1)\|_*=O(\delta)$. Also, by using a similar argument for $j=m_1+1,\dots,m$, we find that $\|L(PZ_2)\|_*=O(\delta)$.
Hence, we get that
\begin{equation*}
\begin{split}
\sum_{i=1}^2\Big[|c_{0i}|+ \sum_{j=1}^m
|c_{ij}| \Big] &\leq C' \|h\|_*+\delta |\log
\delta|O\Big(\sum_{i=1}^2 \Big[|c_{0i}|+ \sum_{j=1}^m
|c_{ij}| \Big]  \bigg),
\end{split}
\end{equation*}
yielding to the desired estimates $\|\phi\|_\infty=O(|\log \delta|
\|h\|_*)$ and $\displaystyle \sum_{i=1}^2 \Big[|c_{0i} |+ \sum_{j=1}^m
|c_{ij}|\Big] =O(\|h\|_*)$. To prove the solvability assertion, problem
\eqref{plco} is equivalent to finding $\phi\in H$ such that
\begin{equation*}
\begin{split}
\int_S \langle\nabla \phi, \nabla \psi\rangle_g dv_g=\int_S &\lf[{\la_1 V_1e^W\over \int_S V_1e^W dv_g}\lf(\phi-{\int_S V_1e^W \phi dv_g \over \int_S V_1e^W dv_g}\rg) \rg. \\
& \lf.\; +\, {\la_2\tau^2 V_2e^{-\tau W}\over \int_S V_2e^{-\tau W} dv_g}\lf(\phi-{\int_S V_2e^{-\tau W} \phi dv_g \over \int_S V_2e^{-\tau W} dv_g}\rg) -h\rg]\psi dv_g\qquad \forall \, \psi \in \ml H,
\end{split}
\end{equation*}
where $\ml{H}=\{\phi \in H_0^1(S) \,:\: \int_S \phi \lap_g PZ_{ij} dv_g=\int_S \phi \lap_g PZ_i dv_g=0,\,
i=1,2,\, j=1,\dots,m \}$. With the aid of Riesz representation theorem, the Fredholm's alternative guarantees unique solvability for any $h$ provided that the homogeneous equation has only the trivial solution: for \eqref{plco} with $h=0$, the a-priori estimate (\ref{estmfe1}) gives that $\phi=0$.

\medskip \noindent So far, we have seen that, if $T(h)$ denotes the unique solution $\phi$ of \eqref{plco}, the operator $T$ is a continuous linear map from $\{h \in L^\infty(S):\, \int_S h dv_g =0 \}$, endowed with the $\|\cdot\|_*$-norm, into $\{\phi \in L^\infty(S):\, \int_S \phi dv_g =0 \}$, endowed with $\|\cdot\|_\infty$-norm. The argument below is heuristic but can be made completely rigourous. The operator $T$ and the coefficients $c_{0i},\,c_{ij}$ are differentiable w.r.t. $\xi_{l}$, $l=1,\dots,m$, or $\mu_k$, $	k=1,2$. We shall argue in the same way to obtain (57) in \cite[Appendix A]{EF}, differentiating equation \eqref{plco}, we formally get that $X=\fr_\beta \phi$, where $\beta=\xi_{l}$ with $l=1,\dots,m$ or $\beta=\mu_k$, $k=1,2$, satisfies $L(X)=\ti h(\phi)+\sum_{i}d_{0i}\lap_g PZ+\sum_{i,j} d_{ij}\lap_g PZ_{ij}$, for a suitable choice of $\ti h(\phi)$, $d_{0i}=\fr_\beta c_{0i}$, $d_{ij}=\fr_\beta c_{ij}$, and the
orthogonality conditions  become
$$\int_S X \lap_g PZ_{ij}dv_g =-\int_S \phi  \fr_\beta(\lap_g
PZ_{ij}) dv_g\,, \qquad \int_S X \lap_g PZ_i dv_g=-\int_S \phi \fr_\beta(\lap_g PZ_i)dv_g.$$
Now, finding and estimating suitable coefficients $b_{0i}$, $b_{ij}$ so that $Y=X+\sum_kb_{0k} PZ_k+ \sum_{k,l} b_{kl}PZ_{kl}$ satisfies the orthogonality conditions $\int_S Y \lap_g PZ_i dv_g=\int_S Y \lap_g PZ_{ij}dv_g=0$, the function $X$ can be uniquely expressed as $X=T(f)-\sum_i b_0PZ_i- \sum_{i,j} b_{ij}PZ_{ij}$, where $f=\ti h(\phi)+\sum_ib_{0i} L(PZ_i)+\sum_{i,j} b_{ij}L(PZ_{ij})$. 
Moreover, we find that $\|f\|_* \le C {|\log \de|  \over \de} \|h\|_*,$ for $\beta=\xi_l$ and $\|f\|_*\le C|\log\de| \|h\|_*$ for $\beta=\mu_k$. By (\ref{estmfe1}) we deduce that for any first derivative 
$$\|\fr_{\xi_l} \phi\|_\infty \le C \Big[|\log \de|\|f\|_*+{\|\phi\|_\infty \over \de}\Big] \le C' {|\log \de|^2 \over\de} \|h\|_*.
$$
and $\|\fr_{\mu_k} \phi\|_\infty \le C|\log\de|^2 \|h\|_*$. Differentiating once more in $\mu_j$ the equation satisfied by $\fr_{\mu_i} \phi$ and arguing as above, we finally obtain that
$\|\fr_{\mu_i \mu_j} \phi \|_\infty \le C |\log \de|^3 \|h\|_*$, and the proof is complete.
\end{proof}


\section{Appendix B}\label{appeB}
\noindent We shall argue in the same way to \cite[Proposition 4.2]{EF}, so that by Proposition \ref{p2} we now deduce the following.
\begin{proof}[{\bf Proof (of Proposition \ref{lpnlabis}):}] In terms of the operator $T$, problem \eqref{pnlabis} takes the form $\ml{A}(\phi)=\phi$, where $\ml{A}(\phi):=-T(R+N(\phi))$. After \cite{DeKM,EF,EFP,EGP,F}, a  standard fixed point argument can be used to obtain that $\ml{A}$
is a contraction mapping of $\ml{F}_\nu$ into itself, where
$$\ml{F}_\nu=\left \{\phi\in C(S)\,:\: \|\phi\|_\infty \le \nu
\bigg[\delta |\log \delta| \sum_{j=1}^m |\nabla \log(\rho_j \circ
y_{\xi_j}^{-1})(0)|+\de^{2-\sigma}|\log \de|^2 \bigg] \right \}.$$
Therefore has a unique fixed point $\phi \in \ml{F}_\nu$. 

\medskip \noindent By the Implicit Function Theorem it follows that the map $(\mu,\xi) \to (\phi(\mu,\xi), c_{0i}(\mu,\xi), c_{ij}(\mu,\xi))$ is (at least) twice-differentiable in $\mu$ and one differentiable in $\xi$. Differentiating  $\phi=-T(R+N(\phi))$ w.r.t. $\beta=\xi_l$, $l=1,\dots,m$, or $\beta=\mu$, we get that $\fr_\beta\phi=-\fr_\beta T(R+N(\phi))-
T(\fr_\beta R+\fr_\beta N(\phi))$. By  Lemma \ref{estrr0} and (\ref{estd}) we have that
$$ \|\fr_{\xi_l} T(R+N(\phi))\|_\infty \le C {|\log \de|^2 \over\de} (\|R\|_*+\|N(\phi)\|_*)=O\bigg( |\log \de|^2  \sum_{j=1}^m |\nabla\log (\rho_j \circ y_{\xi_j}^{-1})(0)|+\de^{1-\sigma}|\log \de|^3 \bigg),$$
for $l=1,\dots,m$, in view of $\|\partial_{\xi_l} W\|_\infty \leq \frac{C}{\de}$ and
$$ \|\fr_{\mu} T(R+N(\phi))\|_\infty \le C |\log \de|^2  (\|R\|_*+\|N(\phi)\|_*)=O\bigg( \de|\log \de|^2  \sum_{j=1}^m |\nabla\log (\rho_j \circ y_{\xi_j}^{-1})(0)|+\de^{2-\sigma}|\log \de|^3 \bigg),$$
in view of $\|\partial_\mu W\|_\infty \leq C$. So, differentiating $\fr_\beta N_i(\phi)$ as in \cite[Appendix A]{EF} with $N_i(\phi)$ in \eqref{ni}, we find that
\begin{equation}\label{derivN}
\|\fr_\beta N(\phi) \|_* \le C\lf[\|\fr_\beta W\|_\infty\|\phi\|_\infty^2+\|\phi\|_\infty\|\fr_\beta\phi\|_\infty\rg]
\end{equation} 
and
\begin{eqnarray*}
\|\fr_{\xi_l} N(\phi) \|_* &=& O\bigg(\de |\log \delta|^2 \sum_{j=1}^m |\nabla \log(\rho_j
\circ y_{\xi_j}^{-1})(0)|^2+\de^{3-2\sigma}|\log \de|^4\bigg)
+o\lf(\frac{\|\fr_{\xi_l}\phi\|_\infty}{|\log \de|}\rg)  \\
\|\fr_{\mu} N(\phi) \|_*&=& O\bigg(\de^2 |\log \delta|^2 \sum_{j=1}^m |\nabla \log(\rho_j
\circ y_{\xi_j}^{-1})(0)|^2+\de^{4-2\sigma}|\log \de|^4\bigg)
+o\lf(\frac{\|\fr_\mu\phi\|_\infty}{|\log \de|}\rg).
\end{eqnarray*}
Since $\int_S \chi_j e^{-\varphi_j} e^{U_j}dv_g=\int_{\mathbb{R}^2} \chi(|y|)\frac{8\mu_k^2\de^2 \rho_j(\xi_j)}{(\mu_k^2\de^2 \rho_j(\xi_j)+|y|^2)^2}dy$, either $k=1$ for $j=1,\dots,m_1$ or $k=2$ for $j=m_1+1,\dots,m$, we have that
$$\partial_{\xi_l}\bigg(\int_S \chi_j e^{-\varphi_j} e^{U_j}dv_g\bigg)= 8 \partial_{\xi_l} \log \rho_j(\xi_j) \int_{\mathbb{R}^2}  \frac{1-|y|^2}{(1+|y|^2)^3}+O(\de^2)=O(\de^2)$$
and similarly,
\begin{equation*}
\begin{split}
\partial_{\mu_k} \bigg(\int_S \chi_j e^{-\varphi_j} e^{U_j} dv_g\bigg) & = \int_{\mathbb{R}^2} \chi(|y|)\frac{16 \mu_k\de^2 \rho_j(\xi_j) (|y|^2-\mu_k^2\de^2 \rho_j(\xi_j))}{(\mu_k^2\de^2 \rho_j(\xi_j)+|y|^2)^3} dy =O(\de^2).
\end{split}
\end{equation*}
Since $\varphi_j(\xi_j)=0$ and $\nabla \varphi_j(\xi_j)=0$, we have that $e^{-\varphi_j}=1+O(|y_{\xi_j}(x)|^2)$ and $\partial_\beta(\chi_j e^{-\varphi_j}(x))=O(|y_{\xi_j}(x)|)$, and then $\ds \lab \fr_{\xi_l} W=-\sum_{j=1}^{m_1} \chi_j  e^{U_j}\fr_{\xi_l} U_j + {1\over \tau}\sum_{j=m_1+1}^m\chi_j  e^{U_j}\fr_{\xi_l} U_j+O(\de^{1-\sigma})$, 
in view of $|\partial_{\xi_l} U_j|=O(\frac{1}{\de})$, $l=1,\dots,m$ and  $\ds \lab \fr_\mu W=-\sum_{j=1}^m \chi_j  e^{U_j}\fr_\mu U_j+ {1\over \tau}\sum_{j=m_1+1}^m\chi_j  e^{U_j}\fr_\mu U_j+O(\de^{2-\sigma})$, 
in view of $|\partial_\mu U_j|=O(1)$, where the big
$O$ is estimated in $\|\cdot \|_*$-norm. Note that in $B_{r_0}(\xi_j)$
$$\partial_{\xi_l} W=
\begin{cases}
\partial_{\xi_l} U_j+O(\de^2 |\log \de|+|y_{\xi_j}(x)|+ |\nabla\log(\rho_j \circ y_{\xi_j}^{-1})(0)|),&\text{ for } j\in\{1,\dots,m_1\},\\[0.4cm]
-\dfrac{1}\tau \big[\partial_{\xi_l} U_j+O(\de^2 |\log \de|+|y_{\xi_j}(x)|+ |\nabla\log(\rho_j \circ y_{\xi_j}^{-1})(0)|)\big],&\text{ for } j\in\{m_1+1,\dots,m\},
\end{cases}$$
and
\begin{eqnarray*}
\partial_{\mu_k} W=\begin{cases}
\partial_{\mu_k} U_j-\dfrac{2}{\mu_k}+O(\de^2 |\log \de|),&\text{ for } j\in\{1,\dots,m_1\} \\[0.4cm]
-\dfrac1\tau\Big[\partial_{\mu_k} U_j-\dfrac{2}{\mu_k}+O(\de^2 |\log \de|)\Big],&\text{ for } j\in\{m_1+1,\dots,m\}
\end{cases}.
\end{eqnarray*}
Furthermore, $\partial_{\xi_l} W=O(1)$ and $\partial_{\mu_k} W=O(\de^2|\log\de|)$ in $S\sm \cup_{j=1}^m B_{r_0}(\xi_j)$.  From computations in the proof of Lemma \ref{ewfxi} we find that
\begin{eqnarray} \label{important}
&&\hspace{-1cm}\dfrac{\la_1 V_1 e^W}{\int_S V_1e^W dv_g} =  \frac{\la_1}{8\pi m_1} \sum_{j=1}^{m_1} \chi_j \bigg[1+\Big\langle\frac{\nabla ( \rho_j \circ y_{\xi_j}^{-1})(0)}{ \rho_j(\xi_j)},y_{\xi_j}(x)\Big\rangle +O(|y_{\xi_j}(x)|^2+\de^2 |\log \de|)\bigg] e^{U_j}  \\
&&\hspace{1.4cm} +\ O(\de^2) \chi_{S \setminus \cup_{j=1}^{m_1} B_{r_0}(\xi_j)},\nonumber
\end{eqnarray}
and
\begin{eqnarray} \label{important2}
&&\hspace{-1.5cm}\dfrac{\la_2\tau V_2 e^{-\tau W} }{\int_S V_2 e^{-\tau W} dv_g}  =  \frac{\la_2\tau }{8\pi m_2} \sum_{j=m_1+1}^{m} \chi_j \bigg[1+\Big\langle\frac{\nabla ( \rho_j \circ y_{\xi_j}^{-1})(0)}{ \rho_j (\xi_j)},y_{\xi_j}(x)\Big\rangle +O(|y_{\xi_j}(x)|^2+\de^2 |\log \de|)\bigg] e^{U_j}\\
&&\hspace{1.3cm} +\ O(\de^2) \chi_{S \setminus \cup_{j=m_1+1}^{m} B_{r_0}(\xi_j)}.\nonumber
\end{eqnarray}
By (\ref{2term}), \eqref{Merry1}, \eqref{Merry0}, \eqref{Merry}, (\ref{Merrymu2}), \eqref{important} and \eqref{important2} we deduce for $\fr_\beta R$ 
the estimate
$$\|\partial_{\xi_l} R\|_*+{1\over \de}\|\partial_{\mu_k} R\|_*=O\bigg(\sum_{j=1}^m |\nabla \log(\rho_j \circ y_{\xi_j}^{-1})(0)|+\de^{1-\sigma} |\log \delta|\bigg), \quad l=1,\dots,m,\ k=1,2$$ 
Combining all the estimates, we then get that
$$\|\fr_{\xi_l} \phi\|_\infty=O\bigg(|\log \delta|^2 \sum_{j=1}^m |\nabla \log(\rho_j \circ y_{\xi_j}^{-1})(0)|+\de^{1-\sigma}|\log \de|^3\bigg)
 +o\big(\|\fr_{\xi_l}\phi\|_\infty\big)$$
and
$$\|\fr_{\mu_k}\phi\|_\infty=O\bigg(\de|\log \delta|^2 \sum_{j=1}^m |\nabla \log(\rho_j \circ y_{\xi_j}^{-1})(0)|+\de^{2-\sigma}|\log \de|^3\bigg) +o\big(\|\fr_{\mu_k} \phi\|_\infty\big),$$
which in turn provides the validity of (\ref{cotadphi1bis}). We proceed in the same way to obtain the estimate (\ref{cotadphi1bis}) on $\fr_{\mu_i\mu_j}\phi$, and the proof is complete.
\end{proof}
\noindent Lemma \ref{cpfc0bis} is rather standard and we will omit its proof. Since the problem has been reduced to find c.p.'s of the reduced energy $E_{\lambda_1,\la_2}(\mu, \xi)= J_{\lambda_1,\la_2} (W+\phi(\mu,\xi))$, where $J_{\lambda_1,\la_2}$ is given by \eqref{energy}, the last key step is show that the main asymptotic term of $E_{\lambda_1,\la_2}$ is given by $J_{\lambda_1,\la_2}(W)$.
\begin{proof}[{\bf Proof (of Theorem \ref{fullexpansionenergy}):}] We argue in the same way as in the proof of \cite[Theorem 4.4]{EF}.  For simplicity we write $J$ instead of $J_{\la_1,\la_2}$. Thus, we get that
\begin{eqnarray*}
J(W+\phi)-J(W)
=-{1\over 2}\int_S \lf[R\phi- N(\phi)\phi\rg]dv_g+\int_0^1\!\!\! \int_0^1 [D^2 J(W+ts \phi)-D^2J(W)][\phi,\phi]\,t\, dsdt,
\end{eqnarray*}
so that, it is straighforward to deduce that
$$|J(W+\phi)-J(W)|=O(\|R\|_*\|\phi\|_\infty + \|\phi\|_\infty^3)
=O\left(\delta^2 |\log \delta |\, |\nabla \varphi_m^*(\xi)|^2+
\delta^{3-\sigma}|\log\delta|^2 \right)$$ in view of
(\ref{cotaphi1bis}), $4\pi\grad_{\xi_j}\varphi_m^*(\xi)=\grad\log(\rho_j\circ y_{\xi_j}^{-1})(0)$, for $j=1,\dots,m_1$ and $4\pi\tau^2 \grad_{\xi_j}\varphi_m^*(\xi)=\grad\log(\rho_j\circ y_{\xi_j}^{-1})(0)$, for $j=m_1+1,\dots,m$. Now, differentiating w.r.t. $\be=\xi_{l}$, $l=1,\dots,m$, or $\be=\mu_k$, $k=1,2$ we get that
\begin{eqnarray*}
|\partial_\beta[J(W+\phi)-J(W)]|= O(\|\partial_\beta
R\|_* \|\phi\|_\infty + \|R\|_* \|\partial_\beta \phi\|_\infty+
\|\phi\|_\infty^2  \|\partial_\beta \phi\|_\infty+\|\phi\|_\infty^3 \|\partial_\beta W\|_\infty)
\end{eqnarray*}
by using \eqref{derivN}, so that,
\begin{eqnarray*}
|\partial_{\xi_l}[J(W+\phi)-J(W)]|= O\lf(\big[\delta^2 |\log \delta|\, | \nabla \varphi_m^*(\xi)|^2+\de^{3-\sigma}|\log \de|^2\big]{|\log\de|\over
\de}\rg)
\end{eqnarray*}
and $|\partial_{\mu_k}[J(W+\phi)-J(W)]|=O\lf(\big[\delta^2 |\log \delta|\, | \nabla \varphi_m^*(\xi)|^2+\de^{3-\sigma}|\log \de|^2\big] |\log\de|\rg)$
in view of (\ref{cotaphi1bis})-(\ref{cotadphi1bis}), $\|\partial_{\xi_l} W\|_\infty=O(\frac{1}{\de})$ and $\|\partial_{\mu_k} W\|_\infty=O(1)$. Arguing similarly for the second derivative in $\mu$, we get that $\lf|\partial_{\mu_i \mu_k}[J(W+\phi)-J(W)]\rg|=O\lf(\big[\delta^2 |\log
\delta|\, | \nabla \varphi_m^*(\xi)|^2+\de^{3-\sigma}|\log
\de|^2\big]|\log\de|^2 \rg)$. Combining the previous estimates on the difference $J(W+\phi)-J(W)$ with the expansion of $J(W)=J_{\la_1,\la_2}(W)$ contained in Theorem \ref{expansionenergy}, we deduce the validity of the expansion (\ref{fullJUt}) with an error term which can be estimated (in $C^2(\mathbb{R}^2)$ and $C^1(\Xi)$) like $o(\de^2)+r_{\lambda_1,\la_2}(\mu,\xi)$ as $\de \to 0$, where $r_{\lambda_1,\la_2}(\mu,\xi)$ does satisfy (\ref{rlambda}). \end{proof}


\small

\end{document}